\documentclass[11pt]{article}

\usepackage{amsmath}
\makeatletter
\let\save@mathaccent\mathaccent
\newcommand*\if@single[3]{%
  \setbox0\hbox{${\mathaccent"0362{#1}}^H$}%
  \setbox2\hbox{${\mathaccent"0362{\kern0pt#1}}^H$}%
  \ifdim\ht0=\ht2 #3\else #2\fi
  }
\newcommand*\rel@kern[1]{\kern#1\dimexpr\macc@kerna}
\newcommand*\widebar[1]{\@ifnextchar^{{\wide@bar{#1}{0}}}{\wide@bar{#1}{1}}}
\newcommand*\wide@bar[2]{\if@single{#1}{\wide@bar@{#1}{#2}{1}}{\wide@bar@{#1}{#2}{2}}}
\newcommand*\wide@bar@[3]{%
  \begingroup
  \def\mathaccent##1##2{%
    \let\mathaccent\save@mathaccent
    \if#32 \let\macc@nucleus\first@char \fi
    \setbox\z@\hbox{$\macc@style{\macc@nucleus}_{}$}%
    \setbox\tw@\hbox{$\macc@style{\macc@nucleus}{}_{}$}%
    \dimen@\wd\tw@
    \advance\dimen@-\wd\z@
    \divide\dimen@ 3
    \@tempdima\wd\tw@
    \advance\@tempdima-\scriptspace
    \divide\@tempdima 10
    \advance\dimen@-\@tempdima
    \ifdim\dimen@>\z@ \dimen@0pt\fi
    \rel@kern{0.6}\kern-\dimen@
    \if#31
      \overline{\rel@kern{-0.6}\kern\dimen@\macc@nucleus\rel@kern{0.4}\kern\dimen@}%
      \advance\dimen@0.4\dimexpr\macc@kerna
      \let\final@kern#2%
      \ifdim\dimen@<\z@ \let\final@kern1\fi
      \if\final@kern1 \kern-\dimen@\fi
    \else
      \overline{\rel@kern{-0.6}\kern\dimen@#1}%
    \fi
  }%
  \macc@depth\@ne
  \let\math@bgroup\@empty \let\math@egroup\macc@set@skewchar
  \mathsurround\z@ \frozen@everymath{\mathgroup\macc@group\relax}%
  \macc@set@skewchar\relax
  \let\mathaccentV\macc@nested@a
  \if#31
    \macc@nested@a\relax111{#1}%
  \else
    \def\gobble@till@marker##1\endmarker{}%
    \futurelet\first@char\gobble@till@marker#1\endmarker
    \ifcat\noexpand\first@char A\else
      \def\first@char{}%
    \fi
    \macc@nested@a\relax111{\first@char}%
  \fi
  \endgroup
}
\makeatother
\usepackage{yhmath}
\usepackage{amssymb}
\usepackage{amsfonts}
\usepackage{mathrsfs}
\usepackage{graphicx}
\usepackage{color,xcolor}
\usepackage{float}
\usepackage{subcaption}
\usepackage[normalem]{ulem}
\usepackage[linktocpage,colorlinks,linkcolor=blue,anchorcolor=blue, citecolor=blue,urlcolor=blue]{hyperref}
\usepackage{pdfpages}
\usepackage[inline]{enumitem}
\usepackage{multirow}
\usepackage{makecell}
\usepackage{breakcites}
\usepackage{bbm}
\usepackage{wrapfig}
\usepackage{wrapstuff}
\usepackage[thinc]{esdiff}
\allowdisplaybreaks[4]
\usepackage{rotating}
\usepackage{scalerel,stackengine}
\usepackage{cases}
\usepackage{mleftright}
\usepackage[inline]{asymptote}
\usepackage{lmodern}
\usepackage{microtype}
\usepackage{cite}
\newtheorem{theorem}{Theorem}[section]
\newtheorem{definition}[theorem]{Definition}
\newtheorem{lemma}[theorem]{Lemma}
\newtheorem{example}[theorem]{Example}
\newtheorem{corollary}[theorem]{Corollary}
\newtheorem{conjecture}[theorem]{Conjecture}
\newtheorem{proposition}[theorem]{Proposition}

\newtheorem{algorithm}[theorem]{Algorithm}

\def\bd{\boldsymbol}
\def\pn{\par\smallskip\noindent}

\newenvironment{myproof}[1] {\pn {\em Proof of {#1}.}}{\hfill $\Box$ \vskip 0.2truein}

\DeclareMathAlphabet\mathbfcal{OMS}{cmsy}{b}{n}

\newcommand{\R}{\mathbb{R}}

\newcommand{\bbP}{\mathbb{P}}

\newcommand{\F}{\mathcal{F}}

\newcommand{\bbB}{\mathbb{B}}
\newcommand{\bbF}{\mathbb{F}}

\newcommand{\bbD}{\mathbb{D}}

\newcommand{\bbS}{\mathbb{S}}

\newcommand{\bbX}{\mathbb{X}}
\newcommand{\bbY}{\mathbb{Y}}

\newcommand{\bA}{\boldsymbol{A}}

\newcommand{\bdd}{\boldsymbol{d}}

\newcommand{\bI}{\boldsymbol{I}}

\newcommand{\be}{\boldsymbol{e}}
\newcommand{\bg}{\boldsymbol{g}}

\newcommand{\bH}{\boldsymbol{H}}
\newcommand{\bO}{\boldsymbol{O}}
\newcommand{\bF}{\boldsymbol{F}}

\newcommand{\bx}{\boldsymbol{x}}
\newcommand{\by}{\boldsymbol{y}}
\newcommand{\bz}{\boldsymbol{z}}
\newcommand{\bu}{\boldsymbol{u}}
\newcommand{\bv}{\boldsymbol{v}}
\newcommand{\bw}{\boldsymbol{w}}

\newcommand{\bp}{\boldsymbol{p}}

\newcommand{\bq}{\boldsymbol{q}}
\newcommand{\bQ}{\boldsymbol{Q}}
\newcommand{\bs}{\boldsymbol{s}}

\newcommand{\bW}{\boldsymbol{W}}
\newcommand{\bX}{\boldsymbol{X}}
\newcommand{\bY}{\boldsymbol{Y}}
\newcommand{\bZ}{\boldsymbol{Z}}
\newcommand{\dd}{\textnormal{d}}

\newcommand{\dC}{\textnormal{C}}

\newcommand{\bbU}{\mathbb{U}}
\newcommand{\bbV}{\mathbb{V}}

\newcommand{\T}{\textnormal{T}}

\def\doublestroke#1{\pdfliteral{1 Tr .3 w}#1\pdfliteral{0 Tr 0 w}}

\usepackage{booktabs}
\usepackage{graphicx}
\usepackage{stmaryrd}
\usepackage{mathtools}
\def\multiset#1#2{\ensuremath{\left(\kern-.3em\left(\genfrac{}{}{0pt}{}{#1}{#2}\right)\kern-.3em\right)}}

\usepackage{multicol}
\usepackage{nicematrix}
\usepackage{stackengine}
\usepackage{rotating}

\stackMath

\usepackage{algorithm}
\usepackage[noend]{algorithmic}

\usepackage{stackengine}
\stackMath
\newcommand\tsup[2][2]{%
 \def\useanchorwidth{T}%
  \ifnum#1>1%
    \stackon[-.5pt]{\tsup[\numexpr#1-1\relax]{#2}}{\scriptscriptstyle\sim}%
  \else%
    \stackon[.5pt]{#2}{\scriptscriptstyle\sim}%
  \fi%
}

\newcommand\restr[2]{{
  \left.\kern-\nulldelimiterspace 
  #1 
  \littletaller 
  \right|_{#2} 
  }}

\newcommand{\littletaller}{\mathchoice{\vphantom{\big|}}{}{}{}}

\usepackage{tikz}
\usepackage{tikz-3dplot}
\usetikzlibrary{calc}
\usepackage{pgfplots}
\usepackage{xxcolor}
\pgfplotsset{compat=1.16}
\usetikzlibrary{external}
\usepgfplotslibrary{fillbetween}
\usetikzlibrary{patterns}
\pgfdeclareradialshading[tikz@ball]{ball}{\pgfqpoint{0bp}{0bp}}{%
 color(0bp)=(tikz@ball!0!white);
 color(7bp)=(tikz@ball!0!white);
 color(15bp)=(tikz@ball!70!black);
 color(20bp)=(black!70);
 color(30bp)=(black!70)}
\makeatother

\tikzset{viewport/.style 2 args={
    x={({cos(-#1)*1cm},{sin(-#1)*sin(#2)*1cm})},
    y={({-sin(-#1)*1cm},{cos(-#1)*sin(#2)*1cm})},
    z={(0,{cos(#2)*1cm})}
}}

\pgfplotsset{only foreground/.style={
    restrict expr to domain={rawx*\CameraX + rawy*\CameraY + rawz*\CameraZ}{-0.05:100},
}}
\pgfplotsset{only background/.style={
    restrict expr to domain={rawx*\CameraX + rawy*\CameraY + rawz*\CameraZ}{-100:0.05}
}}

\def\addFGBGplot[#1]#2;{
    \addplot3[#1,only background, opacity=0.25] #2;
    \addplot3[#1,only foreground] #2;
}

\usetikzlibrary{3d}
\usetikzlibrary{calc}
\usetikzlibrary{babel} 

\pgfmathsetmacro\xx{1/sqrt(2)}
\pgfmathsetmacro\xy{1/sqrt(6)}
\pgfmathsetmacro\zy{sqrt(2/3)}

\usetikzlibrary{3d}
\usetikzlibrary{calc, shadings} 
\usetikzlibrary{positioning,arrows.meta}
\usetikzlibrary{trees}
\usepgfplotslibrary{fillbetween}
\DeclareMathAlphabet\mathbfcal{OMS}{cmsy}{b}{n}

\makeatletter
\def\tikz@lib@cuboid@get#1{\pgfkeysvalueof{/tikz/cuboid/#1}}

\def\tikz@lib@cuboid@setup{%
   \pgfmathsetlengthmacro{\vxx}%
      {\tikz@lib@cuboid@get{xscale}*cos(\tikz@lib@cuboid@get{xangle})*1cm}
   \pgfmathsetlengthmacro{\vxy}%
      {\tikz@lib@cuboid@get{xscale}*sin(\tikz@lib@cuboid@get{xangle})*1cm}
   \pgfmathsetlengthmacro{\vyx}%
      {\tikz@lib@cuboid@get{yscale}*cos(\tikz@lib@cuboid@get{yangle})*1cm}
   \pgfmathsetlengthmacro{\vyy}%
      {\tikz@lib@cuboid@get{yscale}*sin(\tikz@lib@cuboid@get{yangle})*1cm}
   \pgfmathsetlengthmacro{\vzx}%
      {\tikz@lib@cuboid@get{zscale}*cos(\tikz@lib@cuboid@get{zangle})*1cm}
   \pgfmathsetlengthmacro{\vzy}%
      {\tikz@lib@cuboid@get{zscale}*sin(\tikz@lib@cuboid@get{zangle})*1cm}
}

\def\tikz@lib@cuboid@draw#1--#2--#3\pgf@stop{%
    \begin{scope}[join=bevel,x={(\vxx,\vxy)},y={(\vyx,\vyy)},z={(\vzx,\vzy)}]
       \begin{scope}[canvas is yz plane at x=#1]
          \draw[cuboid/all faces,cuboid/edges,cuboid/right face] 
                (0,0) -- ++(#2,0) -- ++(0,-#3) -- ++(-#2,0) -- cycle;
          \draw[cuboid/all grids,cuboid/right grid] (0,0) grid (#2,-#3);
       \end{scope}
       \begin{scope}[canvas is xy plane at z=0]
          \draw[cuboid/all faces,cuboid/edges,cuboid/front face] 
                (0,0) -- ++(#1,0) --  ++(0,#2) -- ++(-#1,0) -- cycle;
          \draw[cuboid/all grids,cuboid/front grid] (0,0) grid (#1,#2);
       \end{scope}
       \begin{scope}[canvas is xz plane at y=#2]
          \draw[cuboid/all faces,cuboid/edges,cuboid/top face] 
                (0,0) -- ++(#1,0) --  ++(0,-#3) -- ++(-#1,0) -- cycle;
          \draw[cuboid/all grids,cuboid/top grid] (0,0) grid (#1,-#3);
       \end{scope}
       \draw[cuboid/hidden edges] (0,#2,-#3) -- (0,0,-#3) -- (0,0,0) 
                (0,0,-#3) -- ++(#1,0,0);
       \begin{scope}[canvas is yz plane at x=#1]
          \draw[cuboid/all faces,cuboid/right face,cuboid/edges,fill opacity=0] 
                (0,0) -- ++(#2,0) -- ++(0,-#3) -- ++(-#2,0) -- cycle;
       \end{scope}
       \begin{scope}[canvas is xy plane at z=0]
          \draw[cuboid/all faces,cuboid/front face,cuboid/edges,fill opacity=0] 
                (0,0) -- ++(#1,0) --  ++(0,#2) -- ++(-#1,0) -- cycle;
       \end{scope}
       \begin{scope}[canvas is xz plane at y=#2]
          \draw[cuboid/all faces,cuboid/top face,cuboid/edges,fill opacity=0] 
                (0,0) -- ++(#1,0) --  ++(0,-#3) -- ++(-#1,0) -- cycle;
       \end{scope}
       \path (0,#2,0) coordinate (-left top front)
                      coordinate (-left front top)
                      coordinate (-top left front)
                      coordinate (-top front left)
                      coordinate (-front top left)
                      coordinate (-front left top);
       \path (0,#2,-#3) coordinate (-left top rear)
                        coordinate (-left rear top)
                        coordinate (-top left rear)
                        coordinate (-top rear left)
                        coordinate (-rear top left)
                        coordinate (-rear left top);
       \path (0,0,-#3) coordinate (-left bottom rear)
                       coordinate (-left rear bottom)
                       coordinate (-bottom left rear)
                       coordinate (-bottom rear left)
                       coordinate (-rear bottom left)
                       coordinate (-rear left bottom);
       \path (0,0,0) coordinate (-left bottom front)
                     coordinate (-left front bottom)
                     coordinate (-bottom left front)
                     coordinate (-bottom front left)
                     coordinate (-front bottom left)
                     coordinate (-front left bottom);
       \path (#1,#2,0) coordinate (-right top front)
                       coordinate (-right front top)
                       coordinate (-top right front)
                       coordinate (-top front right)
                       coordinate (-front top right)
                       coordinate (-front right top);
       \path (#1,#2,-#3) coordinate (-right top rear)
                         coordinate (-right rear top)
                         coordinate (-top right rear)
                         coordinate (-top rear right)
                         coordinate (-rear top right)
                         coordinate (-rear right top);
       \path (#1,0,-#3) coordinate (-right bottom rear)
                        coordinate (-right rear bottom)
                        coordinate (-bottom right rear)
                        coordinate (-bottom rear right)
                        coordinate (-rear bottom right)
                        coordinate (-rear right bottom);
       \path (#1,0,0) coordinate (-right bottom front)
                      coordinate (-right front bottom)
                      coordinate (-bottom right front)
                      coordinate (-bottom front right)
                      coordinate (-front bottom right)
                      coordinate (-front right bottom);
       \coordinate (-left center) at (0,.5*#2,-.5*#3);
       \coordinate (-right center) at (#1,.5*#2,-.5*#3);
       \coordinate (-top center) at (.5*#1,#2,-.5*#3);
       \coordinate (-bottom center) at (.5*#1,0,-.5*#3);
       \coordinate (-front center) at (.5*#1,.5*#2,0);
       \coordinate (-rear center) at (.5*#1,.5*#2,-#3);
       \coordinate (-center) at (.5*#1,.5*#2,-.5*#3);
       \path (0,#2,-.5*#3) coordinate (-left top center) 
                           coordinate (-top left center);
       \path (.5*#1,#2,-#3) coordinate (-top rear center)
                            coordinate (-rear top center);
       \path (#1,#2,-.5*#3) coordinate (-right top center)
                            coordinate (-top right center);
       \path (.5*#1,#2,0) coordinate (-top front center)
                          coordinate (-front top center);
       \path (0,0,-.5*#3) coordinate (-left bottom center) 
                           coordinate (-bottom left center);
       \path (.5*#1,0,-#3) coordinate (-bottom rear center)
                            coordinate (-rear bottom center);
       \path (#1,0,-.5*#3) coordinate (-right bottom center)
                            coordinate (-bottom right center);
       \path (.5*#1,0,0) coordinate (-bottom front center)
                          coordinate (-front bottom center);
       \path (0,.5*#2,0) coordinate (-left front center) 
                           coordinate (-front left center);
       \path (0,.5*#2,-#3) coordinate (-left rear center)
                            coordinate (-rear left center);
       \path (#1,.5*#2,0) coordinate (-right front center)
                            coordinate (-front right center);
       \path (#1,.5*#2,-#3) coordinate (-right rear center)
                          coordinate (-rear right center);
    \end{scope}
}

\tikzset{
  pics/cuboid/.style = {
    setup code = \tikz@lib@cuboid@setup,
    background code = \tikz@lib@cuboid@draw#1\pgf@stop
  },
  pics/cuboid/.default={1--1--1},
  cuboid/.is family,
  cuboid,
  all faces/.style={fill=white},
  all grids/.style={draw=none},
  front face/.style={},
  front grid/.style={},
  right face/.style={},
  right grid/.style={},
  top face/.style={},
  top grid/.style={},
  edges/.style={},
  hidden edges/.style={draw=none},
  xangle/.initial=0,
  yangle/.initial=90,
  zangle/.initial=210,
  xscale/.initial=1,
  yscale/.initial=1,
  zscale/.initial=0.5
}

\newcommand{\tikzcuboidreset}{
\tikzset{cuboid,
  all faces/.style={fill=white},
  all grids/.style={draw=none},
  front face/.style={},
  front grid/.style={},
  right face/.style={},
  right grid/.style={},
  top face/.style={},
  top grid/.style={},
  edges/.style={},
  hidden edges/.style={draw=none},
  xangle=0,
  yangle=90,
  zangle=210,
  xscale=1,
  yscale=1,
  zscale=0.5
}
}

\newcommand{\tikzcuboidset}{\@ifstar\tikzcuboidset@star\tikzcuboidset@nostar} 
\newcommand{\tikzcuboidset@nostar}[1]{\tikzcuboidreset\tikzset{cuboid,#1}}
\newcommand{\tikzcuboidset@star}[1]{\tikzset{cuboid,#1}}
\makeatother

\usetikzlibrary{angles, quotes}

\makeatletter
\newif\ifnobrackets
\renewcommand\@cite[2]{\ifnobrackets\else[\fi{#1\if@tempswa , #2\fi}\ifnobrackets\else]\fi\nobracketsfalse}

\makeatother

\usepackage{lmodern}
\usetikzlibrary{decorations.pathmorphing}
\tikzset{snake it/.style={decorate, decoration=snake}}

\begin{document}

\title{On computing Goldstein approximate second-order stationary points of structured nonsmooth nonconvex programs
}

\author{
Jiewen GUAN
\thanks{Department of Systems Engineering and Engineering Management, The Chinese University of Hong Kong, Shatin, New Territories, Hong Kong. Email: seemjwguan@gmail.com}
    \and
Anthony Man-Cho SO
\thanks{Department of Systems Engineering and Engineering Management, The Chinese University of Hong Kong, Shatin, New Territories, Hong Kong. Email: manchoso@se.cuhk.edu.hk}
}

\date{\today}

\maketitle

\begin{abstract}
In this paper, we exhibit a randomized first-order algorithm to compute Goldstein approximate second-order stationary points of $L$-smooth functions, 
using tools from
randomized smoothing.
The algorithm has oracle complexity $\widetilde{O}({  n^2}/{\varepsilon^9}+{  n^3}/{\varepsilon^7})$,
where $n=1,2,\ldots$ is the input dimension and $\varepsilon>0$ is the (common) tolerance.
We also present extensions to weakly convex functions and applications to bilevel optimization.
    



\end{abstract}

\section{Introduction}\label{sec:introduction}
For nonconvex functions,
computing second-order stationary points (SOSPs),
or
equivalently,
saddle escape,
is important.
From first principles,
they are the natural next target beyond (first-order) stationary points (FOSPs) in the stationarity hierarchy $\textnormal{FOSPs}\supseteq\textnormal{SOSPs}
\supseteq\cdots$,
thereby offering the promise of
solutions with higher quality.
From a practical point of view,
many problems arising in applications admit no spurious SOSPs (i.e., every such point is globally optimal; e.g., low-rank matrix recovery~\cite{bhojanapalli2016global}, phase retrieval~\cite{sun2018geometric}, and neural collapse~\cite{zhu2021geometric}), which implies that any algorithm with convergence to SOSPs will avoid all spurious FOSPs and converge to global optimality.

It is now folkloric that computing SOSPs of \textit{smooth} nonconvex functions is deterministically tractable; e.g., the cubic Newton methods~\cite{griewank1981modification,nesterov2006cubic,cartis2011adaptive1} and trust-region methods~\cite{conn2000trust,curtis2018concise,jiang2026beyond} already offer finite-time guarantees,
whereas even some first-order methods (e.g., gradient descent~\cite{lee2016gradient}) can converge to SOSPs,
while randomness is often introduced to obtain a finite-time complexity~\cite{ge2015escaping,jin2017escape}.
However, once \textit{nonsmoothness} enters the picture, the situation becomes rather different.
While some recent advances show certain computability of SOSPs for weakly convex functions and beyond under assumptions such as active manifolds and linear tilting (see, e.g.,~\cite{davis2022proximal,bianchi2024stochastic,davis2025active}),
without such assumptions,
our recent work~\cite{guan2026hardness} shows that even for $L$-smooth functions, even a highly permissive target, namely Goldstein SOSPs, already faces fundamental barriers to deterministic computations;
in particular, the complexity of any such algorithm must be $\Omega(n)$, where $n=1,2,\ldots$ is the input dimension.
This naturally leads one to ask what guarantees randomization can offer.

In this paper, while not bypassing the lower bounds, 
we establish 
a
randomized upper bound 
$
    \widetilde{O}\bigl({\Delta L^8 n^2}/{\varepsilon^9}+{\Delta L^6 n^3}/{\varepsilon^7}\bigr)
$
for the problem, where $\Delta>0$ is the initial optimality gap and $\varepsilon>0$ is the (common) tolerance,
via (uniform) randomized smoothing, a technique that has recently been shown to be effective for finding Goldstein FOSPs of Lipschitz functions; see~\cite{lin2022gradient} and also, e.g.,~\cite{kornowski2024algorithm,xia2025revisiting,lei2025subdifferentially} for follow-up works. To be specific, just as~\cite{lin2022gradient} shows that randomized smoothing smooths a Lipschitz function into an $L$-smooth function~\cite[Proposition~2.3]{lin2022gradient} 
and that an FOSP of the smoothed function transfers to a Goldstein FOSP of the original function~\cite[Theorem~3.1]{lin2022gradient},
we show that the same technique likewise smooths an $L$-smooth function into a function with Lipschitz Hessian, together with a similar SOSP transfer property.
With these in place, 
together with some standard estimators for the gradient and Hessian of the smoothed function,
the desired algorithm then follows 
by applying the cubic Newton method to the smoothed function.
In passing, we note that the algorithm is entirely first-order, in a similar spirit to~\cite[Algorithm~1]{lin2022gradient}, thereby obviating the need for cumbersome 
(generalized)
Hessian manipulations.

Building on the algorithm, we then turn to study the computation of Goldstein SOSPs for other structured functions/programs beyond the $L$-smooth setting.
In particular, we propose a new and appropriate notion of Goldstein SOSPs for weakly convex functions via the Moreau envelope~\cite{moreau1965proximite}, which is nontrivial as a naive extension of the notion of Clarke SOSPs~\cite[Theorem~3.1]{hiriart1984generalized} to such functions fails to be necessary for optimality,
and also develop an algorithm for computing such points.
Besides, we apply the two pieces of machinery developed earlier to bilevel programs, which allows us, e.g., to remove the assumption of third-order differentiability for the lower-level function in, e.g.,~\cite{yang2023accelerating,chen2025near,huang2025efficiently} in the nonconvex--strongly-convex (NC--SC) regime while still retaining a meaningful second-order guarantee, and to broaden the applicability of the set-smoothness machinery developed in~\cite{chen2025set}
in the nonconvex--(Polyak--\L{}ojasiewicz) (NC--P\L{}) regime.

The rest of the paper is organized as follows. We first lay out the notation and preliminaries in Section~\ref{sec:notation-prelim}. 
Then, we present the second-order randomized smoothing in Section~\ref{sec:randomized-smoothing}, the randomized upper bound in Section~\ref{sec:computing-Goldstein-SOSPs},
and the extensions to weakly convex functions and applications to bilevel programs in Section~\ref{sec:extensions-applications}.
Finally, we conclude this paper in Section~\ref{sec:conclusions} with some future directions.

\section{Notation and preliminaries}\label{sec:notation-prelim}

\subsection{Notation}
In this paper,
we 
use
lowercase letters (e.g., $x$), boldface lowercase letters (e.g., $\bx=(x_i)$), and boldface capital letters (e.g., $\bX=(x_{i j})$), to denote scalars, vectors, and matrices, respectively. 
We use 
$\|\bx\|$ to denote the 
$\ell_2$-norm of any $\bx\in\R^n$,
and 
$\bbB^n:=\{\bx\in\R^n:\|\bx\|\le 1\}$ and $\mathbb{S}^n:=\{\bx\in\R^n:\|\bx\|= 1\}$ to denote the unit ball and 
unit
sphere in $\R^n$. 
For any $\bA\in\R^{m\times n}$, we write
$\|\bA\|$ for its spectral norm,
$\|\bA\|_1$ for its (entrywise) $\ell_1$-norm,
and $\bA^{\T}$ for its transpose.
When $\bA$ is symmetric, 
$\lambda_{\min}(\bA)$ denotes its smallest eigenvalue.
The 
all-zero vector in $\R^n$ is denoted by $\bd{0}_n$,
the $i$-th standard basis vector in $\R^n$ is denoted by $\be^n_i$,
and the 
identity matrix 
in $\R^{n\times n}$
is denoted by $\bI_n$;
their subscripts and superscripts 
are often omitted as long as there is no ambiguity. 
We use 
$\vee$
to denote
vector concatenation; i.e., $\bx\vee\by=(\bx^{\T},\by^{\T})^{\T}$.
We also adopt $\langle\bullet,\bullet\rangle$ to denote the (Frobenius) inner product; i.e., $\langle\bX,\bY\rangle=\sum_{i=1}^m \sum_{j=1}^n x_{i j}y_{i j}$.
As an aside, $\succeq$ denotes the L\"owner order.


We use $\sum_{j>i}$ and $\sum_{i=1}^{n-1}\sum_{j=i+1}^{n}$ interchangeably as long as $n$ is clear from the context.
Besides,
$\lceil\bullet\rceil$ denotes the ceiling 
operation
and $\log$ denotes the natural logarithm.
We use $\operatorname{Arg}\min$ to denote the set of (global) minimizers of a program, 
and, when the minimizer is unique, 
$\arg\min$ to denote that unique element;
the same convention applies to maximizers as well.
As an aside,
in this paper, we do not distinguish between the maximum and supremum, nor between the minimum and infimum.
For any $\mathbb{X}\subseteq\R^n$, we denote by
$\operatorname{conv}(\bbX)$ its convex hull, by 
$$
    \operatorname{dist}(\by,\bbX):=\min\bigl\{\|\by-\bx\|:\bx\in\bbX\bigr\}
$$ 
the distance from $\by
$ to $\bbX$,
by $\operatorname{vol}(\bbX)$ its volume (i.e., its Lebesgue measure),\footnote{By a slight abuse of notation, we also write $\operatorname{vol}(\bbS^n)$ for the surface area of $\bbS^n$.}
by $\bbX^{\complement}$ its complement,
and by $|\bbX|$ its cardinality.
When $|\bbX|=1$, we identify 
$\bbX$
with its unique element.
We also denote by $\emptyset$ the empty set.
For any $f:\R^n\rightarrow\R\cup\{\pm\infty\}$, we use $\operatorname{dom}(f)$ to denote its
effective domain;
see, e.g.,~\cite[Section~1.A]{rockafellar2009variational}.
For any $\lambda>0$, 
$\bx\in\R^n$,
and
lower semicontinuous $f:\R^n\rightarrow\R$, its Moreau envelope and proximal mapping
are defined by (see, e.g.,~\cite[Definition~1.22]{rockafellar2009variational})
$$
    f_{\lambda}(\bx):=\min_{\by\in\R^n}\mleft(f(\by)+\frac{1}{2\lambda}\|\by-\bx\|^2\mright)
    \quad\text{and}\quad
    \operatorname{Prox}_{\lambda f}(\bx):=\underset{\by\in\R^n}{\operatorname{Arg}\min}\,\mleft(f(\by)+\frac{1}{2\lambda}\|\by-\bx\|^2\mright).
$$
For any $f:\R^n\rightarrow\R$, we say that it is $L$-smooth for an $L\ge 0$ if it is differentiable with $\nabla f$ being $L$-Lipschitz,
whereas 
that it is $\rho$-weakly convex for a $\rho\ge 0$ if $f+2^{-1}\rho\mleft\|\bullet\mright\|^2$ is convex~\cite[Definition~4.1]{vial1983strong}.
In passing, we remark that as weakly convex functions are subdifferentially regular (by, e.g.,~\cite[Example~7.27]{rockafellar2009variational} and~\cite[Exercise~8.20(b)]{rockafellar2009variational}) 
and locally Lipschitz (by, e.g.,~\cite[Proposition~4.4]{vial1983strong}),
their Fr\'echet~\cite[Definition~8.3(a)]{rockafellar2009variational}, Mordukhovich~\cite[Definition~1.77]{mordukhovich2006variational}, and Clarke~\cite[Theorem~2.5.1]{clarke1990optimization} subdifferentials all collapse to the same object (by, e.g.,~\cite[Fact~5]{li2020understanding}), and we thus use $\partial f$ to denote any of them.
For a vector field $\bF:\R^n\rightarrow\R^n$, we use $\operatorname{div}\bF$ to denote its divergence.


We adopt $\bbP$ to denote a probability measure and $\mathbb{E}$ the corresponding expectation, and, when necessary, the underlying law is specified by the subscript; e.g., $\mathbb{E}_{\bu\sim\operatorname{Unif}(\bbB^n)}$, where $\operatorname{Unif}(\bbB^n)$ is the uniform distribution on $\bbB^n$,
and $\sim$ 
means
``is distributed according to.''
We also use $\overset{\operatorname{iid}}{\sim}$ to denote independent and identically distributed sampling,
and $\#$ to denote the pushforward operator.
While asymptotic notation is used extensively in this paper, we refrain from reviewing it in detail, as it adheres primarily to standard conventions in the field. 
All set-valued limit superiors 
are
understood
in the sense of Painlev{\'e}--Kuratowski (see, e.g.,~\cite[Section~5.B]{rockafellar2009variational}), whereas scalar limit superiors are taken in the usual sense; the same convention applies to limit inferiors as well.

\subsection{Generalized differentiation theory and stationarity concepts}\label{sec:diff}
To start, we introduce the (first-order) Clarke and Goldstein subdifferentials.
\begin{definition}\label{def:subdiffs}
Given a 
locally
Lipschitz function $f:\R^n\rightarrow\R$ and a point $\bx\in\R^n$:
\begin{itemize}
    \item The Clarke subdifferential of $f$ at $\bx$ is defined as 
    $$
    \partial_{\dC} f(\bx):=\operatorname{conv}\mleft(\limsup_{\by\rightarrow\bx,\,\by\in\mathbb{D}}\{\nabla f(\by)\}\mright),
    $$
    where $\mathbb{D}\subseteq\R^n$ is the set of points at which $f$ is differentiable;
    see, e.g.,~\cite[Theorem~2.5.1]{clarke1990optimization}.

    \item Given a constant $\delta\ge 0$, the Goldstein $\delta$-subdifferential of $f$ at $\bx$ is defined as
    $$
        \partial_{\delta} f(\bx):=\operatorname{conv}\mleft(\bigcup_{\by\in\bx+\delta\bbB^n}\partial_{\dC} f(\by)\mright);
    $$
    see~\cite[Section~2]{goldstein1977optimization}. 
\end{itemize}
\end{definition}

We note
that for 
a
locally
Lipschitz 
function $f$ and a point $\bx$, the 
sets
$\partial_{\dC} f(\bx)$ and $\partial_{\delta} f(\bx)$ are always nonempty, convex, and compact; see, e.g.,~\cite[Proposition~2.1.2(a)]{clarke1990optimization} and~\cite[Proposition~2.3]{goldstein1977optimization}.
With the first-order constructions in place, we next introduce tools from second-order variational analysis.\footnote{For a comprehensive treatment 
to this subject,
we refer interested readers to Mordukhovich's recent monograph~\cite{mordukhovich2024second}.}
We begin with the Clarke generalized Hessian, defined 
analogously
to $\partial_{\dC} f(\bx)$.

\begin{definition}\label{def:second-subdiffs}
Given an 
$L$-smooth
function $f:\R^n\rightarrow\R$ and a point $\bx\in\R^n$:
    \begin{itemize}
        \item The Clarke generalized Hessian of $f$ at $\bx$ is defined as
        $$
            \partial_{\dC}^2 f(\bx):=\operatorname{conv}\mleft(\limsup_{\by\rightarrow\bx,\,\by\in\bbD}\mleft\{\nabla^2 f(\by)\mright\}\mright)\subseteq\R_{\operatorname{sym}}^{n\times n},
        $$
        where $\mathbb{D}\subseteq\R^n$ is the set of points at which $f$ is twice differentiable; see, e.g.,~\cite[Definition~2.6.1]{clarke1990optimization} and~\cite[Theorem~13.52]{rockafellar2009variational}.
    \end{itemize}
\end{definition}

We note that similar to the first-order constructions,
for an 
$L$-smooth
function $f$ and a point $\bx$, the 
set
$\partial_{\dC}^2 f(\bx)$ is also nonempty, convex, and compact; see, e.g.,~\cite[Proposition~2.6.2(a)]{clarke1990optimization}.
Very naturally, the construction of $\partial_{\dC}^2 f(\bx)$ gives rise to a second-order optimality condition 
that parallels the smooth case; i.e., vanishing gradient and positive semidefinite Hessian.

\begin{lemma}[{\cite[Theorem~3.1]{hiriart1984generalized}}]\label{lma:Clarke-SOSP}
    Suppose that $f:\R^n\rightarrow\R$ is $L$-smooth,
    and let $\bx\in\R^n$ be a local minimizer of $f$. Then, we have
    \begin{equation}\label{eq:Clarke-SOSP}
        \nabla f(\bx)=\bd{0}\quad\text{and}\quad\min_{\bw\in\bbS^n}\max_{\bA\in\partial_{\dC}^2 f(\bx)}\bw^{\T}\bA\bw\ge 0.
    \end{equation}
\end{lemma}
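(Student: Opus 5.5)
The first condition, $\nabla f(\bx)=\bd{0}$, is the classical first-order necessary condition: $f$ is differentiable and $\bx$ is a local minimizer. All the work is in the second condition. Since $\partial_{\dC}^2 f(\bx)$ is compact and convex, the inner maximum is attained and is the support function $\sigma(\bw):=\max_{\bA\in\partial_{\dC}^2 f(\bx)}\bw^{\T}\bA\bw$. We must show that $\sigma(\bw)\ge 0$ for every $\bw\in\bbS^n$.

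Fix $\bw\in\bbS^n$. The plan is to compare $f$ along the ray $t\mapsto \bx+t\bw$ with a second-order expansion whose curvature is controlled by the generalized Hessian. Because $\nabla f(\bx)=\bd{0}$,
$$
    f(\bx+t\bw)-f(\bx)=\int_0^1\langle\nabla f(\bx+st\bw),t\bw\rangle\,\dd s
    =\int_0^1\bigl\langle\nabla f(\bx+st\bw)-\nabla f(\bx),t\bw\bigr\rangle\,\dd s .
$$
Next we use a mean-value theorem for the Lipschitz map $\nabla f$ (Clarke's generalized Jacobian mean-value theorem, \cite[Proposition~2.6.5]{clarke1990optimization}). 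It gives
$$
    \nabla f(\bx+st\bw)-\nabla f(\bx)\in\operatorname{conv}\Bigl(\partial_{\dC}^2 f\bigl([\bx,\bx+st\bw]\bigr)\Bigr)\,st\bw .
$$
The convex hull is taken over all Clarke generalized Hessians along the segment. Note that by Rademacher's theorem $\nabla f$ is differentiable a.e., and the generalized Jacobian of $\nabla f$ coincides with $\partial_{\dC}^2 f$ as defined. Consequently,
$$
    0\le f(\bx+t\bw)-f(\bx)\le t^2\int_0^1 s\,\max\Bigl\{\bw^{\T}\bA\bw:\bA\in\partial_{\dC}^2 f(\by),\ \by\in\bx+t\bbB^n\Bigr\}\,\dd s
$$
for all small $t>0$. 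The left inequality is local minimality.

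It remains to let $t\downarrow 0$, and this is the step needing care. We need upper semicontinuity of $\by\mapsto\partial_{\dC}^2 f(\by)$ at $\bx$: for any $\eta>0$ there is $r>0$ with $\partial_{\dC}^2 f(\by)\subseteq\partial_{\dC}^2 f(\bx)+\eta\bbB^{n\times n}$ whenever $\|\by-\bx\|\le r$. This follows from the definition as a convex hull of a Painlevé--Kuratowski limsup, together with the uniform bound $\|\nabla^2 f\|\le L$ on $\bbD$ (\cite[Proposition~2.6.2]{clarke1990optimization}). The inner maximum is then at most $\sigma(\bw)+\eta$. Dividing by $t^2/2$ gives $0\le\sigma(\bw)+\eta$, and since $\eta$ is arbitrary, $\sigma(\bw)\ge 0$. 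Since $\bw$ was arbitrary, the min over $\bbS^n$ is nonnegative, which is \eqref{eq:Clarke-SOSP}.

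An alternative route for the mean-value step, if one prefers to avoid the generalized mean-value theorem, is to perturb the direction slightly. By Fubini, for almost every nearby starting point $\bx'$, $\nabla f$ is differentiable at almost every point of the segment from $\bx'$ in direction $\bw$. One integrates the genuine Hessians along such segments and passes to the limit using continuity of $f$. Either way, the main obstacle is this measure-zero issue along a single line, plus the upper semicontinuity argument.
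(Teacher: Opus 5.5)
Your argument is correct. The paper gives no proof of its own and simply cites \cite[Theorem~3.1]{hiriart1984generalized}, whose argument is essentially yours: a second-order mean-value (Taylor) formula for $C^{1,1}$ functions along the ray $\bx+t\bw$, followed by letting $t\searrow 0$ using the boundedness and closedness (upper semicontinuity) of $\partial_{\dC}^2 f$. Integrating Clarke's generalized-Jacobian mean-value theorem, as you do, gives an inequality rather than their exact Taylor identity with a single generalized Hessian at an intermediate point, but that inequality is all that is needed.
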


For convenience, we refer to points satisfying (\ref{eq:Clarke-SOSP}) as Clarke SOSPs.
We now turn to the central object in this 
subsection: The Goldstein generalized Hessian.

\begin{definition}
\label{def:Goldstein-2nd-subdiff}
Given a constant $\delta\ge 0$, an 
$L$-smooth
function $f:\R^n\rightarrow\R$, and a point $\bx\in\R^n$, the Goldstein
generalized Hessian
of $f$ at $\bx$ is 
defined as
    $$
        \partial_{\delta}^2 f(\bx)
        :=\operatorname{conv}\mleft(\bigcup_{\by\in\bx+\delta\bbB^n}\partial_{\dC}^2 f(\by)
        \mright).
    $$
\end{definition}

In other words, $\partial_{\delta}^2 f(\bx)$ is the smallest convex set that contains all possible 
$\partial_{\dC}^2 f(\by)$
from points $\by$ with distance at most $\delta$ from $\bx$, 
in direct analogy with
$\partial_{\delta} f(\bx)$.
This explains
why $\partial_{\delta}^2 f(\bx)$ is referred to as ``Goldstein''.
We also note that,
essentially, 
$\partial_{\delta}^2 f(\bx)$
is closely related to, yet distinct from, 
the 
second-order $\varepsilon$-jet
introduced by Gebken;
see~\cite[Definition~1]{gebken2022using}.

Next,
we introduce
the stationarity concept that 
$\partial_{\delta}^2 f(\bx)$ induces. 

\begin{definition}
\label{def:Goldstein-SOSP}
    Given constants $\varepsilon_1,\varepsilon_2,\delta\ge 0$, an 
    $L$-smooth
    function $f:\R^n\rightarrow\R$, 
    and a point $\bx\in\R^n$, we say that $\bx$ is an $(\varepsilon_1,\varepsilon_2,\delta)$-Goldstein approximate 
    SOSP
    of $f$ if 
    $$
        \|\nabla f(\bx)\|\le\varepsilon_1\quad\text{and}\quad\min_{\bw\in\bbS^n}\max_{\bA\in\partial_{\delta}^2 f(\bx)}\bw^{\T}\bA\bw\ge-\varepsilon_2.
    $$
\end{definition}


In closing, 
we introduce second subderivatives, which, while
not serving as our algorithmic target, will play a crucial role in our analysis.

\begin{definition}
Given a 
function $f:\R^n\rightarrow\R$, a point $\bx\in\R^n$, and vectors $\bv,\bw\in
\R^n$:
    \begin{itemize}    
        \item The second subderivative of $f$ at $\bx$ for $\bv$
        and $\bw
        $
        is
        defined as
        $$
            \dd^2{f(\bx;\bv)}(\bw):=\liminf_{\substack{t\searrow 0,\, \bw^{\prime}\rightarrow\bw}}\frac{f(\bx+t\bw^{\prime})-f(\bx)-t\cdot\bv^{\T}\bw^{\prime}}{\frac{1}{2}t^2};
        $$
        see, e.g.,~\cite[Definition~13.3]{rockafellar2009variational}.
    \end{itemize}
\end{definition}

As in Lemma~\ref{lma:Clarke-SOSP}, second subderivatives likewise give rise to a second-order optimality condition,
whose solutions we refer to as Rockafellar SOSPs.

\begin{lemma}[{\cite[Theorem~13.24(a)]{rockafellar2009variational}}]
    If
    $\bx\in\R^n$ is a local minimizer of 
    $f:\R^n\rightarrow\R$, 
    then\footnote{Here, the subdifferential is understood in the sense of Mordukhovich; see, e.g.,~\cite[Definition~1.77]{mordukhovich2006variational}.}
    $$
        \bd{0}\in\partial f(\bx)
        \quad\text{and}\quad
        \dd^2{f(\bx;\bd{0})}(\bw)\ge 0
        \quad\text{for all $\bw\in\R^n$}.        
    $$
\end{lemma}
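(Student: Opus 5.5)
The first-order condition $\bd{0}\in\partial f(\bx)$ is the classical Fermat rule for the Mordukhovich subdifferential. Its proof goes through the Fréchet (regular) subdifferential, which is contained in the limiting one, so I would prove both claims directly from local minimality. Assume $f(\by)\ge f(\bx)$ for all $\by\in\bx+r\bbB^n$ with some $r>0$.

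\emph{First-order part.} For any $\by$ near $\bx$ we have $f(\by)-f(\bx)-\langle\bd{0},\by-\bx\rangle\ge 0$. Hence
$$
\liminf_{\by\rightarrow\bx,\,\by\neq\bx}\frac{f(\by)-f(\bx)-\langle\bd{0},\by-\bx\rangle}{\|\by-\bx\|}\ge 0,
$$
which is exactly the defining inequality for $\bd{0}$ to be a Fréchet subgradient of $f$ at $\bx$. The Mordukhovich subdifferential is the outer limit of Fréchet subdifferentials along $\by\rightarrow\bx$ with $f(\by)\rightarrow f(\bx)$. Taking the constant sequence $\by=\bx$ gives $\bd{0}\in\partial f(\bx)$. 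For the locally Lipschitz functions this paper mostly considers, $f$-attentive convergence is automatic, but the constant-sequence argument does not need it.

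\emph{Second-order part.} Fix $\bw\in\R^n$, and take any sequences $t_k\searrow 0$ and $\bw_k\rightarrow\bw$. Since $(\bw_k)$ is bounded, $\|t_k\bw_k\|\le r$ for all large $k$. Therefore $f(\bx+t_k\bw_k)-f(\bx)-t_k\cdot\bd{0}^{\T}\bw_k\ge 0$, and dividing by $\tfrac12 t_k^2>0$ keeps every difference quotient nonnegative. The liminf in the definition of $\dd^2 f(\bx;\bd{0})(\bw)$ is the infimum of limits along such sequences, so it is at least $0$. This bound holds in the extended sense, where the value $+\infty$ is allowed.

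Neither step is a real obstacle; the argument is essentially definitional. The one point needing care is the first-order part: one must check that the subdifferential used is the limiting (Mordukhovich) one and that Fréchet subgradients embed into it. This embedding is standard, e.g. in~\cite[Definition~8.3]{rockafellar2009variational}.
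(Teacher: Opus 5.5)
Your proof is correct and follows essentially the standard argument behind the cited result \cite[Theorem~13.24(a)]{rockafellar2009variational}, which the paper quotes without proof. Fermat's rule shows that $\bd{0}$ is a regular (Fr\'echet) subgradient at $\bx$, hence a limiting one via the constant sequence, and local minimality makes every second-order difference quotient with $\bv=\bd{0}$ nonnegative once $t\|\bw^{\prime}\|$ is small, so the lower limit is nonnegative.
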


\section{Second-order randomized smoothing}\label{sec:randomized-smoothing}

\subsection{From differentiability to twice differentiability}
The following theorem is the main result of this section and underpins everything that follows.

\begin{theorem}\label{thm:smoothing}
    Let $L,\sigma>0$ be arbitrary and $f:\R^n\rightarrow\R$ be $L$-smooth. For $f_{\sigma}:\R^n\rightarrow\R$ given by
    $$
        f_{\sigma}(\bx):=\mathbb{E}_{\bu\sim\operatorname{Unif}(\bbB^n)}[f(\bx+\sigma\bu)]\quad\text{for all $\bx\in\R^n$},
    $$
    the following statements hold.
    \begin{itemize}
        \item The function $f_{\sigma}:\R^n\rightarrow\R$ is twice differentiable.

        \item For all $\bx,\by\in\R^n$, the mapping $\nabla f_{\sigma}:\R^n\rightarrow\R^n$ satisfies that
        $$
            \nabla f_{\sigma}(\bx)=\mathbb{E}_{\bu\sim\operatorname{Unif}(\bbB^n)}[\nabla f(\bx+\sigma\bu)]\quad\text{and}\quad\|\nabla f_{\sigma}(\bx)-\nabla f_{\sigma}(\by)\|\le L \|\bx-\by\|.
        $$

        \item For all $\bx,\by\in\R^n$, the mapping $\nabla^2 f_{\sigma}:\R^n\rightarrow\R_{\operatorname{sym}}^{n\times n}$ satisfies that
        $$
            \nabla^2 f_{\sigma}(\bx)=\mathbb{E}_{\bu\sim\operatorname{Unif}(\bbB^n)}[\nabla^2 f(\bx+\sigma\bu)]\quad\text{and}\quad\|\nabla^2 f_{\sigma}(\bx)-\nabla^2 f_{\sigma}(\by)\|
            \le\frac{c L \sqrt{n}}{\sigma}\|\bx-\by\|,
        $$
        where $c>0$ is a fixed constant.
    \end{itemize}
\end{theorem}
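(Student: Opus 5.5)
The plan is to reduce all three items to the first-order randomized smoothing result applied to the vector field $\nabla f$. Write $g:=\nabla f$, which is $L$-Lipschitz. The gradient formula and its Lipschitz bound come first. Differentiating under the expectation is justified by dominated convergence: the difference quotients of $f(\bx+\sigma\bu)$ are bounded uniformly in $\bu$ on compact sets, since $f$ is $C^1$ with Lipschitz gradient. This gives $\nabla f_\sigma(\bx)=\mathbb{E}[\nabla f(\bx+\sigma\bu)]$. The Lipschitz estimate then follows from Jensen:
$$
\|\nabla f_\sigma(\bx)-\nabla f_\sigma(\by)\|\le \mathbb{E}\|\nabla f(\bx+\sigma\bu)-\nabla f(\by+\sigma\bu)\|\le L\|\bx-\by\|.
$$

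Next comes twice differentiability together with the Lipschitz Hessian bound, via the divergence theorem. Write $\nabla f_\sigma(\bx)=\frac{1}{\operatorname{vol}(\sigma\bbB^n)}\int_{\bx+\sigma\bbB^n} g(\bz)\,\dd\bz$. Differentiating an integral over a translated ball gives a surface integral, exactly as in the first-order smoothing of Lipschitz functions. The Jacobian is
$$
\nabla^2 f_\sigma(\bx)=\frac{n}{\sigma}\,\mathbb{E}_{\bv\sim\operatorname{Unif}(\bbS^n)}\bigl[g(\bx+\sigma\bv)\bv^{\T}\bigr],
$$
using $\operatorname{vol}(\bbS^n)/\operatorname{vol}(\bbB^n)=n$. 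This representation is continuous in $\bx$ because $g$ is continuous, so $f_\sigma$ is $C^2$. Since $g$ is Lipschitz, it is differentiable almost everywhere by Rademacher and absolutely continuous on lines. Applying the divergence theorem componentwise to this Sobolev field then converts the surface formula back into $\mathbb{E}_{\bu\sim\operatorname{Unif}(\bbB^n)}[\nabla^2 f(\bx+\sigma\bu)]$, with $\nabla^2 f$ defined a.e. Symmetry follows from $f_\sigma$ being $C^2$, or from a.e. symmetry of $\nabla^2 f$.

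The Lipschitz constant $cL\sqrt n/\sigma$ is the main obstacle. The naive bound from the surface formula gives $nL/\sigma$, which is too weak. The first step is to subtract a constant: since $\mathbb{E}[\bv]=\bd 0$, we may replace $g(\bx+\sigma\bv)-g(\by+\sigma\bv)$ by $\bh(\bv)$. Then
$$
\|\nabla^2 f_\sigma(\bx)-\nabla^2 f_\sigma(\by)\|=\frac{n}{\sigma}\sup_{\bp,\bq\in\bbS^n}\mathbb{E}\bigl[(\bp^{\T}\bh(\bv))(\bq^{\T}\bv)\bigr].
$$
By Cauchy--Schwarz this is at most
$$
\frac{n}{\sigma}\bigl(\mathbb{E}\|\bh(\bv)\|^2\bigr)^{1/2}\bigl(\mathbb{E}(\bq^{\T}\bv)^2\bigr)^{1/2}\le \frac{n}{\sigma}\cdot L\|\bx-\by\|\cdot\frac{1}{\sqrt n},
$$
using $\mathbb{E}(\bq^{\T}\bv)^2=1/n$ on the sphere. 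This yields $c=1$, mirroring the $\sqrt n$ argument of \cite[Proposition~2.3]{lin2022gradient}.

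A more careful route, avoiding the factor-of-two issues in the surface computation, takes $\bx,\by$ on a line and bounds the directional derivative of the Hessian. The only delicate points are justifying the interchange and the surface formula for merely Lipschitz $g$. I would handle these by mollifying $f$ and passing to the limit, since all bounds involve only $L$.
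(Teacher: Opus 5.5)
Your proof is correct, but it is organized differently from the paper's.

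\emph{The Hessian mean formula.} The paper never passes through the sphere here. It differentiates $\nabla f_\sigma(\bx)=\mathbb{E}[\nabla f(\bx+\sigma\bu)]$ once more by dominated convergence. Rademacher's theorem gives a.e.\ convergence of the difference quotients of $\partial f/\partial x_j$, and these quotients are bounded by $L$. This yields $\nabla^2 f_\sigma(\bx)=\mathbb{E}[\nabla^2 f(\bx+\sigma\bu)]$ directly, with no translated-ball surface formula and no Gauss--Green theorem for Lipschitz fields.

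\emph{The Lipschitz constant.} The paper computes nothing. It applies \cite[Proposition~2.3]{lin2022gradient} as a black box to each scalar slice $h(\bullet;\bdd)=\bdd^{\T}\nabla f$, which is $L$-Lipschitz. It then notes $\nabla h_\sigma(\bx;\bdd)=\nabla^2 f_\sigma(\bx)\bdd$ and takes the maximum over $\bdd\in\bbS^n$. Your argument is that black box unrolled at the matrix level. Your $\bp$ plays the role of the paper's $\bdd$, and your Cauchy--Schwarz step with $\mathbb{E}(\bq^{\T}\bv)^2=1/n$ is exactly the $\sqrt{n}$ estimate inside Lin et al.'s proof.

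\emph{What each route buys.} Your route is self-contained and gives the explicit constant $c=1$. It also delivers, as a by-product, the sphere representation $\nabla^2 f_\sigma(\bx)=\frac{n}{\sigma}\mathbb{E}_{\bv}[\nabla f(\bx+\sigma\bv)\bv^{\T}]$. The paper has to prove that separately later (Proposition~\ref{prop:divergence}) for its Hessian estimator. The paper's route buys brevity, modular reuse of the first-order smoothing result, and a more elementary derivation of the Hessian mean formula.

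Your closing hedge about ``factor-of-two issues'' and a line-restricted alternative is unnecessary. The surface computation you wrote is already exact, since $\operatorname{vol}(\bbS^n)/\operatorname{vol}(\bbB^n)=n$ and the outward normal on $\bx+\sigma\bbS^n$ is $\bv$.
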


\begin{proof}
    We shall first deal with the differentiability of $f_{\sigma}$. To this end, for some given $\bx\in\R^n$, let us consider the functions $f_{\bx,i}:(-1,1)\times\bbB^n\rightarrow\R$ for $i=1,\ldots,n$ defined by $f_{\bx,i}(t,\bu):=f(\bx+\sigma\bu+t\be_i)$. First, by~\cite[Theorem~4.14]{rudin1976principles}, it follows that for any $t\in(-1,1)$, there exists some $M>0$, such that $|f_{\bx,i}(t,\bu)|\le M$ for all $\bu\in\bbB^n$. 
    As a result, for all such $t$,
    we have $\mathbb{E}_{\bu\sim\operatorname{Unif}(\bbB^n)}[|f_{\bx,i}(t,\bu)|]\le M<\infty$; i.e., 
    $f_{\bx,i}$ is Lebesgue-integrable w.r.t.\ $\bu$. Besides, as $f$ is $L$-smooth as assumed, we know that $\partial f_{\bx,i}(t,\bu)/\partial t$ exists and equals $\partial f(\bx+\sigma\bu+t\be_i)/\partial x_i$ for all $t\in(-1,1)$ and $\bu\in\bbB^n$.
    Moreover, by~\cite[Theorem~4.14]{rudin1976principles} again, 
    there further exists some $M^{\prime}>0$ such that $|\partial f_{\bx,i}(t,\bu)/\partial t|\le M^{\prime}$ for all $t\in(-1,1)$ and $\bu\in\bbB^n$. It then follows 
    from~\cite[Theorem~2.27(b)]{folland1999real} that for all $i=1,\ldots,n$, the function $t\mapsto f_{\sigma}(\bx+t\be_i)=\mathbb{E}_{\bu\sim\operatorname{Unif}(\bbB^n)}[f_{\bx,i}(t,\bu)]$ is differentiable at $0$ with 
    $$
        \frac{\partial f_{\sigma}(\bx)}{\partial x_i}=\bigl(t\mapsto f_{\sigma}(\bx+t\be_i)\bigr)^{\prime}(0)=
        \mathbb{E}_{\bu\sim\operatorname{Unif}(\bbB^n)}\mleft[\frac{\partial f_{\bx,i}}{\partial t}(0,\bu)\mright]=\mathbb{E}_{\bu\sim\operatorname{Unif}(\bbB^n)}\mleft[\frac{\partial f}{\partial x_i}(\bx+\sigma\bu)\mright];
    $$
    i.e., $\nabla f_{\sigma}(\bx)=\mathbb{E}_{\bu\sim\operatorname{Unif}(\bbB^n)}[\nabla f(\bx+\sigma\bu)]$.
    It remains to show that $\partial f_{\sigma}(\bx)/\partial x_i$ for $i=1,\ldots,n$ are all Lipschitz continuous. 
    
    Indeed, for all $\bx,\by\in\R^n$, we have 
    $$
    \begin{aligned}
        \left\|\begin{pmatrix}
            \partial f_{\sigma}(\bx)/\partial x_1 \\
            \vdots \\
            \partial f_{\sigma}(\bx)/\partial x_n
        \end{pmatrix}-\begin{pmatrix}
            \partial f_{\sigma}(\by)/\partial x_1 \\
            \vdots \\
            \partial f_{\sigma}(\by)/\partial x_n
        \end{pmatrix}\right\|&=\left\|\mathbb{E}_{\bu\sim\operatorname{Unif}(\bbB^n)}\bigl[\nabla f (\bx+\sigma\bu)\bigr]-\mathbb{E}_{\bu\sim\operatorname{Unif}(\bbB^n)}\bigl[\nabla f (\by+\sigma\bu)\bigr]\right\|\\
        &\le\mathbb{E}_{\bu\sim\operatorname{Unif}(\bbB^n)}\bigl[\|\nabla f (\bx+\sigma\bu)-\nabla f (\by+\sigma\bu)\|\bigr]\le L \|\bx-\by\|;
    \end{aligned}
    $$
    i.e., $\nabla f_{\sigma}$ is $L$-Lipschitz, as desired. Here, the first inequality follows from Jensen's inequality; see, e.g.,~\cite[Theorem~1.6.2]{durrett2019probability}. 

    We next turn to the twice differentiability of $f_{\sigma}$. Given $\bx\in\R^n$, we see that
    $$
    \begin{aligned}
        \frac{1}{t}\left(\frac{\partial f_{\sigma}}{\partial x_j}(\bx+t\be_i)-\frac{\partial f_{\sigma}}{\partial x_j}(\bx)\right)&=\frac{1}{t}\left(\mathbb{E}_{\bu\sim\operatorname{Unif}(\bbB^n)}\mleft[\frac{\partial f}{\partial x_j}(\bx+\sigma\bu+t\be_i)\mright]-\mathbb{E}_{\bu\sim\operatorname{Unif}(\bbB^n)}\mleft[\frac{\partial f}{\partial x_j}(\bx+\sigma\bu)\mright]\right)\\
        &=\int_{\bbB^n}\frac{1}{t}\left(\frac{\partial f}{\partial x_j}(\bx+\sigma\bu+t\be_i)-\frac{\partial f}{\partial x_j}(\bx+\sigma\bu)\right)\bbP_{\bu\sim\operatorname{Unif}(\bbB^n)}(\dd\bu)
    \end{aligned}
    $$
    for all $j=1,\ldots,n$. In what follows, we shall take limits on both sides and pass the limit on the right-hand side by applying the dominated convergence theorem (DCT); see, e.g.,~\cite[Theorem~2.24]{folland1999real}.
    To begin, as $f$ is $L$-smooth, we know from Rademacher's theorem~\cite[Theorem~9.60]{rockafellar2009variational} that $\nabla f$ is a.e.\ differentiable. Hence, for all $i=1,\ldots,n$, 
    the limit
    $$
        \frac{\partial^2 f}{\partial x_i\partial x_j}(\bx+\sigma\bu)=\lim_{t\rightarrow 0}\frac{1}{t}\left(\frac{\partial f}{\partial x_j}(\bx+\sigma\bu+t\be_i)-\frac{\partial f}{\partial x_j}(\bx+\sigma\bu)\right)
    $$
    exists for $\bbP_{\bu\sim\operatorname{Unif}(\bbB^n)}$-a.e.\ $\bu\in\bbB^n$. 
    This, together with Heine's theorem~\cite[Proposition~1]{zorich2015mathematical}, further implies that for all sequences $t_1,t_2,\ldots$ with $\lim_{k\rightarrow\infty} t_k=0$ and all such $\bu\in\bbB^n$, the limit
    $$
        \lim_{k\rightarrow\infty}\frac{1}{t_k}\left(\frac{\partial f}{\partial x_j}(\bx+\sigma\bu+t_k\be_i)-\frac{\partial f}{\partial x_j}(\bx+\sigma\bu)\right)
    $$
    exists and equals $\partial^2 f(\bx+\sigma\bu)/\partial x_i\partial x_j$.
    Besides, 
    for all $t\neq 0$, we have
    $$
        \left|\frac{1}{t}\left(\frac{\partial f}{\partial x_j}(\bx+\sigma\bu+t\be_i)-\frac{\partial f}{\partial x_j}(\bx+\sigma\bu)\right)\right|\le\frac{1}{|t|}\left\|\nabla f(\bx+\sigma\bu+t\be_i)-\nabla f(\bx+\sigma\bu)\right\|\le L,
    $$
    and thus
    $$
        \int_{\bbB^n}\left|\frac{1}{t}\left(\frac{\partial f}{\partial x_j}(\bx+\sigma\bu+t\be_i)-\frac{\partial f}{\partial x_j}(\bx+\sigma\bu)\right)\right|\bbP_{\bu\sim\operatorname{Unif}(\bbB^n)}(\dd\bu)<\infty.
    $$
    It then follows from the DCT that for all sequences $t_1,t_2,\ldots$ with $\lim_{k\rightarrow\infty} t_k=0$,
    $$
    \begin{aligned}
        &\lim_{k\rightarrow\infty}\frac{1}{t_k}\left(\frac{\partial f_{\sigma}}{\partial x_j}(\bx+t_k\be_i)-\frac{\partial f_{\sigma}}{\partial x_j}(\bx)\right)\\
        =&\lim_{k\rightarrow\infty}\int_{\bbB^n}\frac{1}{t_k}\left(\frac{\partial f}{\partial x_j}(\bx+\sigma\bu+t_k\be_i)-\frac{\partial f}{\partial x_j}(\bx+\sigma\bu)\right)\bbP_{\bu\sim\operatorname{Unif}(\bbB^n)}(\dd\bu)\\
        =&\int_{\bbB^n}\lim_{k\rightarrow\infty}\frac{1}{t_k}\left(\frac{\partial f}{\partial x_j}(\bx+\sigma\bu+t_k\be_i)-\frac{\partial f}{\partial x_j}(\bx+\sigma\bu)\right)\bbP_{\bu\sim\operatorname{Unif}(\bbB^n)}(\dd\bu)\\
        =&\int_{\bbB^n}\frac{\partial^2 f}{\partial x_i\partial x_j}(\bx+\sigma\bu)\bbP_{\bu\sim\operatorname{Unif}(\bbB^n)}(\dd\bu)=\mathbb{E}_{\bu\sim\operatorname{Unif}(\bbB^n)}\mleft[\frac{\partial^2 f}{\partial x_i\partial x_j}(\bx+\sigma\bu)\mright].
    \end{aligned}
    $$
    Hence, we have by Heine's theorem~\cite[Proposition~1]{zorich2015mathematical} again that for all $i,j=1,\ldots,n$,
    $$
        \frac{\partial^2 f_{\sigma}}{\partial x_i\partial x_j}(\bx)=\lim_{t\rightarrow 0}\frac{1}{t}\left(\frac{\partial f_{\sigma}}{\partial x_j}(\bx+t\be_i)-\frac{\partial f_{\sigma}}{\partial x_j}(\bx)\right)=\mathbb{E}_{\bu\sim\operatorname{Unif}(\bbB^n)}\mleft[\frac{\partial^2 f}{\partial x_i\partial x_j}(\bx+\sigma\bu)\mright];
    $$ 
    i.e., $\nabla^2 f_{\sigma}(\bx)=\mathbb{E}_{\bu\sim\operatorname{Unif}(\bbB^n)}[\nabla^2 f(\bx+\sigma\bu)]$.
    It remains to show that $\partial^2 f_{\sigma}(\bx)/\partial x_i\partial x_j$ for $i,j=1,\ldots,n$ are all Lipschitz continuous in the spectral norm. 

    To this end, for each direction $\bdd\in\bbS^n$, let us consider the marginal function $h(\bullet;\bdd):\R^n\rightarrow\R$ defined by $h(\bx;\bdd):=\bdd^{\T}\nabla f(\bx)$; as a sanity check, we have $\nabla h(\bx;\bdd)=\nabla^2 f(\bx)\bdd$ a.e. (In passing, we note that as $f$ is $L$-smooth and thus prox-regular (see, e.g.,~\cite[Proposition~13.34]{rockafellar2009variational}), it follows from~\cite[Corollary~13.42]{rockafellar2009variational} that $\nabla^2 f(\bx)$ must be symmetric at every point where it exists.) As $\nabla f$ is $L$-Lipschitz as assumed, it follows from~\cite[Exercise~9.9]{rockafellar2009variational} that all the functions $h(\bullet;\bdd)$ are $L$-Lipschitz. As a result, we know from~\cite[Proposition~2.3]{lin2022gradient} that for every $\bdd\in\bbS^n$, the function $h_{\sigma}(\bullet;\bdd):\R^n\rightarrow\R$ defined by $h_{\sigma}(\bx;\bdd):=\mathbb{E}_{\bu\sim\operatorname{Unif}(\bbB^n)}[h(\bx+\sigma\bu;\bdd)]$ is ${c L \sqrt{n}}/{\sigma}$-smooth for some fixed $c>0$.
    Besides, by definition, we have
    $$
        h_{\sigma}(\bx;\bdd)
        =\mathbb{E}_{\bu\sim\operatorname{Unif}(\bbB^n)}[\bdd^{\T}\nabla f(\bx+\sigma\bu)]=\bdd^{\T}\mathbb{E}_{\bu\sim\operatorname{Unif}(\bbB^n)}[\nabla f(\bx+\sigma\bu)]=\bdd^{\T}\nabla f_{\sigma}(\bx),
    $$
    which, together with the twice differentiability of $f_{\sigma}$ shown earlier, further implies that $h_{\sigma}(\bullet;\bdd)$ is differentiable with $\nabla h_{\sigma}(\bx;\bdd)=\nabla^2 f_{\sigma}(\bx)\bdd$.
    As a result, we have
    $$
    \begin{aligned}
        \|\nabla^2 f_{\sigma}(\bx)-\nabla^2 f_{\sigma}(\by)\|&=\max_{\bdd\in\bbS^n}\|\nabla^2 f_{\sigma}(\bx)\bdd-\nabla^2 f_{\sigma}(\by)\bdd\|\\
        &=\max_{\bdd\in\bbS^n}\|\nabla h_{\sigma}(\bx;\bdd)-\nabla h_{\sigma}(\by;\bdd)\|\le\frac{c L \sqrt{n}}{\sigma}\|\bx-\by\|\quad\text{for all $\bx,\by\in\R^n$},
    \end{aligned}
    $$
    as desired. 
\end{proof}


For further developments in randomized smoothing,
we refer interested readers to, e.g.,~\cite{duchi2012randomized,yousefian2012stochastic,nesterov2017random}.

\subsection{Gradient and Hessian estimators}
With Theorem~\ref{thm:smoothing} in place, we next investigate how to estimate the gradient and Hessian of $f_{\sigma}$.



    

\subsubsection{Gradient estimator}\label{sec:gradient-estimator}
Recall that $\nabla f_{\sigma}(\bx)=\mathbb{E}_{\bu\sim\operatorname{Unif}(\bbB^n)}[\nabla f(\bx+\sigma\bu)]$, and thus 
$$
    \nabla f_{\sigma}(\bx)=\mathbb{E}_{\bu\sim\operatorname{Unif}(\bbB^n)}\mleft[\frac{\nabla f(\bx+\sigma\bu)+\nabla f(\bx-\sigma\bu)}{2}\mright],
$$
as $\bu$ and $-\bu$ are equal in distribution. This immediately suggests the following estimator.

\begin{proposition}\label{prop:gradient-estimator}
    Let $L,\sigma>0$, $\bx\in\R^n$, and $b=1,2,\ldots$ be arbitrary, $f:\R^n\rightarrow\R$ be $L$-smooth, and 
    $$
        \widehat{\bg}_b(\bx):=\frac{1}{b}\sum_{j=1}^b \frac{\nabla f(\bx+\sigma\bu_j)+\nabla f(\bx-\sigma\bu_j)}{2},\quad\text{where $\bu_1,\ldots,\bu_b\sim\operatorname{Unif}(\bbB^n)$ independently}.
    $$
    We have $\widehat{\bg}_b(\bx)$ is unbiased and satisfies that
    $$
        \mathbb{P}\mleft\{\|\widehat{\bg}_b(\bx)-\nabla f_{\sigma}(\bx)\|\ge t\mright\}\le 2\exp\mleft(-\frac{b t^2}{2 L^2\sigma^2+ 4 L\sigma t/3}\mright)\quad\text{for all $t>0$}.
    $$
\end{proposition}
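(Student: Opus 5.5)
Unbiasedness follows from Theorem~\ref{thm:smoothing}: $\nabla f_{\sigma}(\bx)=\mathbb{E}_{\bu\sim\operatorname{Unif}(\bbB^n)}[\nabla f(\bx+\sigma\bu)]$, and $\bu$ and $-\bu$ have the same law. Hence each summand
$$
\bz_j:=\frac{\nabla f(\bx+\sigma\bu_j)+\nabla f(\bx-\sigma\bu_j)}{2}
$$
has mean $\nabla f_{\sigma}(\bx)$, and so does their average $\widehat{\bg}_b(\bx)$.

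For the tail bound, write the centered summands as $\bY_j:=\bz_j-\nabla f_{\sigma}(\bx)$. These are i.i.d.\ zero-mean random vectors, and we apply a Bernstein inequality for sums of bounded independent vectors in a Hilbert space. The version we use is Pinelis's inequality, or equivalently the vector Bernstein bound in Tropp's matrix Bernstein via the Hermitian dilation. It states that if $\|\bY_j\|\le R$ almost surely and $\sum_j\mathbb{E}\|\bY_j\|^2\le V$, then
$$
\mathbb{P}\Bigl\{\Bigl\|\sum_{j=1}^b\bY_j\Bigr\|\ge s\Bigr\}\le 2\exp\mleft(-\frac{s^2/2}{V+Rs/3}\mright).
$$
We will apply this with $s=bt$.

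The key step is computing $R$ and $V$ using $L$-smoothness.

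\emph{Almost-sure bound.} Since $\nabla f$ is $L$-Lipschitz, $\|\nabla f(\bx\pm\sigma\bu_j)-\nabla f(\bx)\|\le L\sigma$. Averaging the two terms gives $\|\bz_j-\nabla f(\bx)\|\le L\sigma$. Jensen's inequality gives the same bound for $\|\nabla f_{\sigma}(\bx)-\nabla f(\bx)\|$. The triangle inequality then yields $\|\bY_j\|\le 2L\sigma=:R$.

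\emph{Variance.} The variance is at most the second moment about any fixed point, so
$$
\mathbb{E}\|\bY_j\|^2\le\mathbb{E}\|\bz_j-\nabla f(\bx)\|^2\le L^2\sigma^2.
$$
This gives $V=bL^2\sigma^2$.

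Substituting $s=bt$, $V=bL^2\sigma^2$ and $R=2L\sigma$, the exponent becomes
$$
-\frac{b^2t^2/2}{bL^2\sigma^2+2L\sigma bt/3}=-\frac{bt^2}{2L^2\sigma^2+4L\sigma t/3}.
$$
This is exactly the stated bound.

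The main obstacle is choosing a vector Bernstein inequality whose prefactor is $2$, independent of dimension, and whose constants match the stated form exactly. A matrix-dilation argument gives a prefactor of $n+1$, so we would cite Pinelis's Hilbert-space version, which has the constant $2$ and the denominator $V+Rs/3$.
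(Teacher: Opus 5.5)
Your proof is correct and follows essentially the same route as the paper: unbiasedness from the symmetry of $\bu$ and $-\bu$, then a dimension-free vector Bernstein (Pinelis-type) inequality with $R=2L\sigma$ and $V=bL^2\sigma^2$, followed by the substitution $s=bt$. You obtain $R$ via the triangle inequality through $\nabla f(\bx)$ and $V$ because the mean minimizes the mean-square error, exactly as the paper does. The only quibble is that you should drop the remark that the dilation-based matrix Bernstein bound is ``equivalent,'' since, as you note yourself, it carries a dimension-dependent prefactor and would not give the stated constant $2$.
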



\begin{proof}
    We have
    $$
    \begin{aligned}
        \mathbb{E}_{\bu_1,\ldots,\bu_b\overset{\operatorname{iid}}{\sim}\operatorname{Unif}(\bbB^n)}[\widehat{\bg}_b(\bx)]&=
        \mathbb{E}_{\bu_1,\ldots,\bu_b\overset{\operatorname{iid}}{\sim}\operatorname{Unif}(\bbB^n)}\mleft[\frac{1}{b}\sum_{j=1}^b \frac{\nabla f(\bx+\sigma\bu_j)+\nabla f(\bx-\sigma\bu_j)}{2}\mright]\\
        &=\frac{1}{b}\sum_{j=1}^b\mathbb{E}_{\bu_j\sim\operatorname{Unif}(\bbB^n)}\mleft[\frac{\nabla f(\bx+\sigma\bu_j)+\nabla f(\bx-\sigma\bu_j)}{2}\mright]=\nabla f_{\sigma}(\bx);
    \end{aligned}
    $$
    i.e., $\widehat{\bg}_b(\bx)$ is unbiased. 
    
    Let us write $\widehat{\bg}_b(\bx)=\frac{1}{b}\sum_{j=1}^b \widehat{\bg}_{b,j}(\bx)$. We shall apply the vector Bernstein inequality (see, e.g.,~\cite[Theorem~32]{cabannes2021fast} and~\cite{pinelis1986remarks}) to obtain the concentration bound. To this end, we first observe that
    $$
    \begin{aligned}
        \|\widehat{\bg}_{b,j}(\bx)-\nabla f_{\sigma}(\bx)\|\le\|\widehat{\bg}_{b,j}(\bx)-\nabla f(\bx)\|+\|\nabla f(\bx)-\nabla f_{\sigma}(\bx)\|.
    \end{aligned}
    $$
    On the one hand,
    \begin{equation}\label{eq:bound-on-gbj-dfx}
    \begin{aligned}
        \|\widehat{\bg}_{b,j}(\bx)-\nabla f(\bx)\|=\left\|\frac{\nabla f(\bx+\sigma\bu_j)+\nabla f(\bx-\sigma\bu_j)}{2}-\nabla f(\bx)\right\|\le L\|\sigma\bu_j\|\le L\sigma.
    \end{aligned}
    \end{equation}
    On the other hand,
    $$
    \begin{aligned}
        \|\nabla f(\bx)-\nabla f_{\sigma}(\bx)\|&=\|\nabla f(\bx)-\mathbb{E}_{\bu\sim\operatorname{Unif}(\bbB^n)}[\nabla f(\bx+\sigma\bu)]\|\\
        &=\|\mathbb{E}_{\bu\sim\operatorname{Unif}(\bbB^n)}[\nabla f(\bx)-\nabla f(\bx+\sigma\bu)]\|\\
        &\le\mathbb{E}_{\bu\sim\operatorname{Unif}(\bbB^n)}[\|\nabla f(\bx)-\nabla f(\bx+\sigma\bu)\|]\le\mathbb{E}_{\bu\sim\operatorname{Unif}(\bbB^n)}[L\|\sigma\bu\|]\le L\sigma.
    \end{aligned}
    $$
    By putting the above two pieces together, it follows that
    $$
        \|\widehat{\bg}_{b,j}(\bx)-\nabla f_{\sigma}(\bx)\|\le 2 L\sigma.
    $$
    Besides, as the mean minimizes the mean-square error, we have
    $$
    \begin{aligned}
        \mathbb{E}_{\bu_j\sim\operatorname{Unif}(\bbB^n)}[\|\widehat{\bg}_{b,j}(\bx)-\nabla f_{\sigma}(\bx)\|^2]\le\mathbb{E}_{\bu_j\sim\operatorname{Unif}(\bbB^n)}[\|\widehat{\bg}_{b,j}(\bx)-\nabla f(\bx)\|^2]\le
        L^2\sigma^2,
    \end{aligned}
    $$
    where the last inequality follows from (\ref{eq:bound-on-gbj-dfx}). As a result, the vector Bernstein inequality implies that
    $$
        \mathbb{P}\mleft\{\left\|\sum_{j=1}^b \left(\widehat{\bg}_{b,j}(\bx)-\nabla f_{\sigma}(\bx)\right)\right\|\ge t\mright\}\le 2\exp\mleft(-\frac{t^2}{2 b L^2\sigma^2+ 4 t L\sigma/3}\mright)\quad\text{for all $t>0$},
    $$
    from which the desired concentration bound immediately follows. 
    We are done.
\end{proof}

\subsubsection{Hessian estimator}
While $\nabla^2 f_{\sigma}(\bx)=\mathbb{E}_{\bu\sim\operatorname{Unif}(\bbB^n)}[\nabla^2 f(\bx+\sigma\bu)]$ also 
suggests a second-order estimator for $\nabla^2 f_{\sigma}(\bx)$,
as $\nabla^2 f(\bx)$ may not be available or can be expensive in practice, and to match the order 
of information used by
the gradient estimator $\widehat{\bg}_b(\bx)$ in Section~\ref{sec:gradient-estimator}, 
we resort to a first-order Hessian estimator. To this end, we 
begin with
a
technical tool connecting
$\nabla^2 f_{\sigma}(\bx)$ with $\nabla f(\bx)$ instead,
which can also be taken as a 
second-order parallel of~\cite[Lemma~2.1]{flaxman2005online}.

\begin{proposition}\label{prop:divergence}    
    Let $L,\sigma>0$ and $\bx\in\R^n$ be arbitrary, and $f:\R^n\rightarrow\R$ be $L$-smooth. We have
    $$
        \nabla^2 f_{\sigma}(\bx)=\frac{n}{\sigma}\mathbb{E}_{\bs\sim\operatorname{Unif}(\bbS^n)}[\nabla f(\bx+\sigma\bs)\bs^{\T}].
    $$
\end{proposition}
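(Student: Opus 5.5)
The plan is to reduce the claim to the first-order identity of Flaxman et al.\ applied row by row. For each fixed $i=1,\ldots,n$, let $g_i:=\partial f/\partial x_i$, which is $L$-Lipschitz and hence continuous and a.e.\ differentiable. Theorem~\ref{thm:smoothing} gives
\[
\nabla^2 f_{\sigma}(\bx)=\mathbb{E}_{\bu\sim\operatorname{Unif}(\bbB^n)}[\nabla^2 f(\bx+\sigma\bu)].
\]
The $i$-th row is therefore $\mathbb{E}_{\bu}[\nabla g_i(\bx+\sigma\bu)]^{\T}$, and the proposition amounts to the identity
\[
\mathbb{E}_{\bu\sim\operatorname{Unif}(\bbB^n)}[\nabla g_i(\bx+\sigma\bu)]=\frac{n}{\sigma}\,\mathbb{E}_{\bs\sim\operatorname{Unif}(\bbS^n)}[g_i(\bx+\sigma\bs)\bs]
\]
for every $i$. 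Stacking these rows yields $\frac{n}{\sigma}\mathbb{E}_{\bs}[\nabla f(\bx+\sigma\bs)\bs^{\T}]$. Symmetry of the Hessian, noted in the proof of Theorem~\ref{thm:smoothing}, means the row/column convention does not matter.

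To prove the displayed identity for a Lipschitz function $g$, I will use the divergence theorem, which is presumably why $\operatorname{div}$ was introduced in the notation. Fix a coordinate $k$ and consider the vector field $\bF(\by):=g(\bx+\sigma\by)\be_k$ on $\bbB^n$. Its divergence is $\sigma\,\partial_k g(\bx+\sigma\by)$ a.e., and its outward flux through $\bbS^n$ is $\int_{\bbS^n} g(\bx+\sigma\bs)s_k\,\dd S$. The divergence theorem then gives
\[
\sigma\int_{\bbB^n}\partial_k g(\bx+\sigma\by)\,\dd\by=\int_{\bbS^n} g(\bx+\sigma\bs)\,s_k\,\dd S(\bs).
\]
Normalizing with $\operatorname{vol}(\bbS^n)/\operatorname{vol}(\bbB^n)=n$ turns the left side into $\sigma$ times the ball expectation and the right side into $n$ times the sphere expectation. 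Dividing by $\sigma$ gives exactly the identity.

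The main obstacle is justifying the divergence theorem when $g$ is only Lipschitz rather than $C^1$. I would handle it by mollification. Let $g^{\epsilon}:=g*\phi_\epsilon$ with a standard mollifier. Then $g^{\epsilon}$ is $C^\infty$ and $L$-Lipschitz, $g^{\epsilon}\to g$ uniformly on compacts, and $\nabla g^{\epsilon}=(\nabla g)*\phi_\epsilon\to\nabla g$ a.e., with the uniform bound $\|\nabla g^\epsilon\|\le L$. The identity holds for each $g^{\epsilon}$ by the classical divergence theorem. As $\epsilon\to0$, the sphere side passes to the limit by uniform convergence, and the ball side by the DCT, as in the proof of Theorem~\ref{thm:smoothing}.

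An alternative route avoids the divergence theorem: apply the flux identity directly with $\by\mapsto g(\bx+\sigma\by)$, where Rademacher's theorem together with Fubini along coordinate lines (absolute continuity on lines) gives the fundamental theorem of calculus on each chord of the ball. Either way, the only nonroutine point is this regularity step. The normalization constant $n$ and the assembly of rows into a matrix are routine.
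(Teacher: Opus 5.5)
Your proof is correct, but it runs in the opposite order from the paper's.

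The paper never needs the divergence theorem for a merely Lipschitz integrand. It applies the classical divergence theorem to the field $f(\bz)\bv$, which is $C^1$ because $f$ is $L$-smooth. Combined with the first-order formula $\nabla f_\sigma(\bx)=\mathbb{E}_{\bu}[\nabla f(\bx+\sigma\bu)]$ from Theorem~\ref{thm:smoothing}, this gives the Flaxman-type identity $\nabla f_{\sigma}(\bx)=\frac{n}{\sigma}\mathbb{E}_{\bs\sim\operatorname{Unif}(\bbS^n)}[f(\bx+\sigma\bs)\bs]$. The paper then differentiates this sphere integral once more in $\bx$, which is routine since $\nabla f$ is continuous and locally bounded. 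That yields $\frac{n}{\sigma}\mathbb{E}_{\bs}[\bs\nabla f(\bx+\sigma\bs)^{\T}]$, and a transpose by symmetry finishes.

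You instead differentiate first, via the a.e.\ Hessian representation $\nabla^2 f_\sigma(\bx)=\mathbb{E}_{\bu}[\nabla^2 f(\bx+\sigma\bu)]$. You then integrate by parts at the level of $\partial f/\partial x_i$, which is only Lipschitz. That is exactly why you need the mollification (or absolute-continuity-on-lines) step, and you handle it correctly:
\begin{itemize}
\item on the sphere, the uniform bound $|g^\epsilon-g|\le L\epsilon$ passes the limit;
\item on the ball, $(\nabla g)*\phi_\epsilon\to\nabla g$ a.e.\ with the bound $L$, and the DCT applies;
\item the identity $\nabla(g*\phi_\epsilon)=(\nabla g)*\phi_\epsilon$, with $\nabla g$ the a.e.\ gradient, itself follows from the same difference-quotient DCT used in Theorem~\ref{thm:smoothing}.
\end{itemize}

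The paper's route is shorter and uses only classical tools: one divergence theorem for a $C^1$ field and one Leibniz differentiation. Your route isolates a reusable lemma: for any Lipschitz $g$, the ball average of $\nabla g(\bx+\sigma\bu)$ equals $\frac{n}{\sigma}$ times the sphere average of $g(\bx+\sigma\bs)\bs$. It then reads the proposition as this first-order identity applied row by row. The price is the extra regularity argument and a dependence on the second-order part of Theorem~\ref{thm:smoothing}.
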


\begin{proof}
    Given $\bv\in\R^n$, let us consider the vector field $\bF:\R^n\rightarrow\R^n$ given by $\bF(\bz):=f(\bz)\bv$. It holds that $\operatorname{div}\bF(\bz)=\bv^{\T}\nabla f(\bz)$. As a result, the divergence theorem (see, e.g.,~\cite[Section~5.9]{folland2024advanced}) implies that
    $$
        \int_{\bx+\sigma\bbB^n}\bv^{\T}\nabla f(\bz)\dd\bz=\int_{\bx+\sigma\bbB^n}\operatorname{div}\bF(\bz)\dd\bz=\frac{1}{\sigma}\int_{\bx+\sigma\bbS^n}f(\bz)\bv^{\T}(\bz-\bx) S(\dd\bz),
    $$
    where $S$ is the surface measure on $\bx+\sigma\bbS^n$; i.e., the restriction of the Hausdorff measure to $\bx+\sigma\bbS^n$ (see, e.g.,~\cite[Section~3.3.4(D)]{evans2015measure}). As $\bv$ is arbitrary, this further implies that
    $$
        \sigma^n\operatorname{vol}(\bbB^n)\nabla f_{\sigma}(\bx)=\int_{\bx+\sigma\bbB^n}\nabla f(\bz)\dd\bz=\frac{1}{\sigma}\int_{\bx+\sigma\bbS^n}f(\bz)(\bz-\bx) S(\dd\bz).
    $$
    Consider the change of variables $\bz=\bx+\sigma\bs$ where $\bs\in\bbS^n$. Then, $S(\dd\bz)=\sigma^{n-1} S(\dd\bs)$; see, e.g.,~\cite[Theorem~2.2(iv--v)]{evans2015measure}. As a result, it follows from~\cite[Theorem~3.6.1]{bogachev2007measure} that
    $$
        \int_{\bx+\sigma\bbS^n}f(\bz)(\bz-\bx) S(\dd\bz)=\sigma^n\int_{\bbS^n}f(\bx+\sigma\bs)\bs S(\dd\bs),
    $$
    and thus
    $$
    \begin{aligned}
        \nabla f_{\sigma}(\bx)&=\frac{1}{\sigma\operatorname{vol}(\bbB^n)}\int_{\bbS^n}f(\bx+\sigma\bs)\bs S(\dd\bs)\\
        &=\frac{n}{\sigma\operatorname{vol}(\bbS^n)}\int_{\bbS^n}f(\bx+\sigma\bs)\bs S(\dd\bs)=\frac{n}{\sigma}\int_{\bbS^n}f(\bx+\sigma\bs)\bs\bbP_{\bs\sim\operatorname{Unif}(\bbS^n)}(\dd\bs).
    \end{aligned}
    $$
    This, together with~\cite[Theorem~2.27(b)]{folland1999real},
    further implies that
    $$
        \nabla^2 f_{\sigma}(\bx)=\frac{n}{\sigma}\int_{\bbS^n}\bs\nabla f(\bx+\sigma\bs)^{\T}\bbP_{\bs\sim\operatorname{Unif}(\bbS^n)}(\dd\bs)=\frac{n}{\sigma}\mathbb{E}_{\bs\sim\operatorname{Unif}(\bbS^n)}[\bs\nabla f(\bx+\sigma\bs)^{\T}].
    $$
    As $\nabla^2 f_{\sigma}(\bx)$ is symmetric, the desired result then follows by taking transpose on both sides.
\end{proof}

As $\bs$ and $-\bs$ equal in distribution, Proposition~\ref{prop:divergence} further implies that
$$
    \nabla^2 f_{\sigma}(\bx)=\mathbb{E}_{\bs\sim\operatorname{Unif}(\bbS^n)}\mleft[\frac{n}{2\sigma}\bigl(\nabla f(\bx+\sigma\bs)-\nabla f(\bx-\sigma\bs)\bigr)\bs^{\T}\mright],
$$
which motivates the following estimator.

\begin{proposition}\label{prop:Hessian-estimator}
    Let $L,\sigma>0$, $\bx\in\R^n$, and $b=1,2,\ldots$ be arbitrary, $f:\R^n\rightarrow\R$ be $L$-smooth, and 
    $$
        \widehat{\bH}_b(\bx):=\frac{1}{b}\sum_{j=1}^b \operatorname{Sym}\mleft(\frac{n}{2\sigma}\bigl(\nabla f(\bx+\sigma\bs_j)-\nabla f(\bx-\sigma\bs_j)\bigr)\bs_j^{\T}\mright),
    $$
    where $\bs_1,\ldots,\bs_b\sim\operatorname{Unif}(\bbS^n)$ independently
    and $\operatorname{Sym}(\bA):=(\bA+\bA^{\T})/2$.
    We have $\widehat{\bH}_b(\bx)$ is unbiased and satisfies that
    $$
        \mathbb{P}\mleft\{\bigl\|\widehat{\bH}_b(\bx)-\nabla^2 f_{\sigma}(\bx)\bigr\|\ge t\mright\}\le 2 n\exp\mleft(-\frac{b t^2/2}{(n+1)^2 L^2+(n+1) L t/3}\mright)\quad\text{for all $t>0$}.
    $$
\end{proposition}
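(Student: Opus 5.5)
Unbiasedness comes straight from Proposition~\ref{prop:divergence}. Since $\bs$ and $-\bs$ are equal in distribution, each summand $\bX_j:=\frac{n}{2\sigma}\bigl(\nabla f(\bx+\sigma\bs_j)-\nabla f(\bx-\sigma\bs_j)\bigr)\bs_j^{\T}$ has expectation $\nabla^2 f_{\sigma}(\bx)$. This matrix is symmetric, so applying $\operatorname{Sym}$ (a linear map) and taking expectations again gives $\nabla^2 f_{\sigma}(\bx)$. Averaging over $j$ then yields $\mathbb{E}[\widehat{\bH}_b(\bx)]=\nabla^2 f_{\sigma}(\bx)$.

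For the concentration bound I would apply the matrix Bernstein inequality for sums of independent, zero-mean, symmetric $n\times n$ matrices (Tropp). Set $\bY_j:=\operatorname{Sym}(\bX_j)-\nabla^2 f_{\sigma}(\bx)$. Suppose $\|\bY_j\|\le R$ almost surely and $\bigl\|\sum_j\mathbb{E}[\bY_j^2]\bigr\|\le v$. Then
$$\mathbb{P}\Bigl\{\Bigl\|\sum_{j}\bY_j\Bigr\|\ge bt\Bigr\}\le 2n\exp\Bigl(-\frac{b^2t^2/2}{v+Rbt/3}\Bigr).$$
With $R=(n+1)L$ and $v=b(n+1)^2L^2$, dividing numerator and denominator by $b$ gives exactly the stated bound.

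It remains to obtain the two constants.
\begin{itemize}
\item \emph{Uniform bound.} By $L$-smoothness, $\|\nabla f(\bx+\sigma\bs)-\nabla f(\bx-\sigma\bs)\|\le 2L\sigma$. Hence $\|\bX_j\|\le\frac{n}{2\sigma}\cdot 2L\sigma\cdot\|\bs_j\|=nL$, and $\|\operatorname{Sym}(\bX_j)\|\le nL$ as well. By Theorem~\ref{thm:smoothing}, $\nabla^2 f_{\sigma}(\bx)$ is an average of $\nabla^2 f$, and $\|\nabla^2 f\|\le L$ wherever it exists, so $\|\nabla^2 f_{\sigma}(\bx)\|\le L$. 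Together these give $\|\bY_j\|\le(n+1)L$.
\item \emph{Variance.} I would use the crude bound $\|\mathbb{E}[\bY_j^2]\|\le\mathbb{E}\|\bY_j\|^2\le(n+1)^2L^2$, so $v=b(n+1)^2L^2$.
\end{itemize}

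I expect no real obstacle. The only care needed is in the citation: the Bernstein version invoked must have the form with denominator $\sigma^2+Rt/3$ and dimensional prefactor $2n$, and it requires symmetric summands. That requirement is the reason for the $\operatorname{Sym}$ operation, because the individual $\bX_j$ are not symmetric.
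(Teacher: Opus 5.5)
Your proposal is correct and matches the paper's proof essentially step by step. It gets unbiasedness from Proposition~\ref{prop:divergence}, the fact that $\bs$ and $-\bs$ are equal in distribution, and linearity of $\operatorname{Sym}$; it then applies matrix Bernstein with $R=(n+1)L$, obtained from $\|\operatorname{Sym}(\bX_j)\|\le nL$ and $\|\nabla^2 f_{\sigma}(\bx)\|\le L$, together with the crude variance bound $\|\mathbb{E}[\bY_j^2]\|\le\mathbb{E}\|\bY_j\|^2\le(n+1)^2L^2$. The only cosmetic difference is how you justify $\|\nabla^2 f_{\sigma}(\bx)\|\le L$: you average the a.e.\ bound $\|\nabla^2 f\|\le L$ through Theorem~\ref{thm:smoothing}, whereas the paper invokes the $L$-smoothness of $f_{\sigma}$ and \cite[Theorem~9.7]{rockafellar2009variational}, and both are valid.
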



\begin{proof}
    We have
    $$
    \begin{aligned}
        \mathbb{E}_{\bs_1,\ldots,\bs_b\overset{\operatorname{iid}}{\sim}\operatorname{Unif}(\bbS^n)}[\widehat{\bH}_b(\bx)]&=\mathbb{E}_{\bs_1,\ldots,\bs_b\overset{\operatorname{iid}}{\sim}\operatorname{Unif}(\bbS^n)}\mleft[\frac{1}{b}\sum_{j=1}^b \operatorname{Sym}\mleft(\frac{n}{2\sigma}\bigl(\nabla f(\bx+\sigma\bs_j)-\nabla f(\bx-\sigma\bs_j)\bigr)\bs_j^{\T}\mright)\mright]\\
        &=\frac{1}{b}\sum_{j=1}^b \operatorname{Sym}\mleft(\mathbb{E}_{\bs_j\sim\operatorname{Unif}(\bbS^n)}\mleft[\frac{n}{2\sigma}\bigl(\nabla f(\bx+\sigma\bs_j)-\nabla f(\bx-\sigma\bs_j)\bigr)\bs_j^{\T}\mright]\mright)\\
        &=\frac{1}{b}\sum_{j=1}^b \operatorname{Sym}\bigl(\nabla^2 f_{\sigma}(\bx)\bigr)=\operatorname{Sym}\bigl(\nabla^2 f_{\sigma}(\bx)\bigr)=\nabla^2 f_{\sigma}(\bx);
    \end{aligned}
    $$
    i.e., $\widehat{\bH}_b(\bx)$ is unbiased.

    Let us write $\widehat{\bH}_b(\bx)=\frac{1}{b}\sum_{j=1}^b \widehat{\bH}_{b,j}(\bx)$. We shall apply the matrix Bernstein inequality (see, e.g.,~\cite[Theorem~5.4.1]{vershynin2018high}) to obtain the concentration bound. To this end, we first observe that
    $$
    \begin{aligned}
        \bigl\|\widehat{\bH}_{b,j}(\bx)-\nabla^2 f_{\sigma}(\bx)\bigr\|\le \bigl\|\widehat{\bH}_{b,j}(\bx)\bigr\|+\bigl\|\nabla^2 f_{\sigma}(\bx)\bigr\|.
    \end{aligned}
    $$
    On the one hand,
    \begin{equation}
    \begin{aligned}
        \bigl\|\widehat{\bH}_{b,j}(\bx)\bigr\|&=\mleft\|\operatorname{Sym}\mleft(\frac{n}{2\sigma}\bigl(\nabla f(\bx+\sigma\bs_j)-\nabla f(\bx-\sigma\bs_j)\bigr)\bs_j^{\T}\mright)\mright\|\\
        &\le \mleft\|\frac{n}{2\sigma}\bigl(\nabla f(\bx+\sigma\bs_j)-\nabla f(\bx-\sigma\bs_j)\bigr)\bs_j^{\T}\mright\|\\
        &=\frac{n}{2\sigma}\bigl\|\nabla f(\bx+\sigma\bs_j)-\nabla f(\bx-\sigma\bs_j)\bigr\|\cdot\|\bs_j\|\le \frac{n}{2\sigma} L \|2\sigma\bs_j\|=n L,
    \end{aligned}
    \end{equation}
    where we have used the fact that $\|\bA\|=\|\bA^{\T}\|$ for all matrices $\bA$ (which may be rectangular and not symmetric).
    On the other hand, as $f_{\sigma}$ is $L$-smooth and twice continuously differentiable, it follows from~\cite[Theorem~9.7]{rockafellar2009variational} that $\bigl\|\nabla^2 f_{\sigma}(\bx)\bigr\|\le L$ for all $\bx\in\R^n$. By putting the above two pieces together, it follows that
    $$
        \bigl\|\widehat{\bH}_{b,j}(\bx)-\nabla^2 f_{\sigma}(\bx)\bigr\|\le (n+1) L.
    $$
    Besides, by Jensen's inequality (see, e.g.,~\cite[Theorem~1.6.2]{durrett2019probability}),
    we have
    $$
    \begin{aligned}
        \left\|\mathbb{E}_{\bs_j\sim\operatorname{Unif}(\bbS^n)}\mleft[\bigl(\widehat{\bH}_{b,j}(\bx)-\nabla^2 f_{\sigma}(\bx)\bigr)^2\mright]\right\|&\le 
        \mathbb{E}_{\bs_j\sim\operatorname{Unif}(\bbS^n)}\mleft[\left\|\bigl(\widehat{\bH}_{b,j}(\bx)-\nabla^2 f_{\sigma}(\bx)\bigr)^2\right\|\mright]\\
        &\le 
        \mathbb{E}_{\bs_j\sim\operatorname{Unif}(\bbS^n)}\mleft[\left\|\widehat{\bH}_{b,j}(\bx)-\nabla^2 f_{\sigma}(\bx)\right\|^2\mright]\\
        &\le 
        \mathbb{E}_{\bs_j\sim\operatorname{Unif}(\bbS^n)}\bigl[(n+1)^2 L^2\bigr]=(n+1)^2 L^2.
    \end{aligned}
    $$
    As a result, the matrix Bernstein inequality implies that
    $$
        \mathbb{P}\mleft\{\left\|\sum_{j=1}^b \bigl(\widehat{\bH}_{b,j}(\bx)-\nabla^2 f_{\sigma}(\bx)\bigr)\right\|\ge t\mright\}\le 2 n\exp\mleft(-\frac{t^2/2}{b (n+1)^2 L^2+(n+1) L t/3}\mright)\quad\text{for all $t>0$},
    $$
    from which the desired concentration bound immediately follows. We are done.
\end{proof}

\subsection{Connecting $\nabla^2 f_{\sigma}(\bx)$ with 
$\operatorname{conv}\mleft(\bigcup_{\by\in\bx+\delta\bbB^n}\partial_{\dC}^2 f(\by)\mright)$
}


In this subsection, our goal is to 
establish
a second-order stationarity transfer property from $f_{\sigma}$ to $f$, which parallels~\cite[Theorem~3.1]{lin2022gradient} and will prove useful in later developments.
To this end, 
we begin with some topological properties of $\partial_{\delta} f(\bx)$ and $\partial_{\delta}^2 f(\bx)$.
\begin{lemma}
    Let $L,\delta>0$ and $\bx\in\R^n$ be arbitrary, and $f:\R^n\rightarrow\R$ be $L$-smooth. The sets $\partial_{\delta} f(\bx)$ and $\partial_{\delta}^2 f(\bx)$ are both nonempty, convex, and compact.
\end{lemma}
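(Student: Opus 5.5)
Both sets are convex hulls of unions over the ball $\bx+\delta\bbB^n$: $\partial_{\delta} f(\bx)$ is the hull of the Clarke subdifferentials, and $\partial_{\delta}^2 f(\bx)$ is the hull of the Clarke generalized Hessians. Convexity is therefore automatic, and nonemptiness follows because each $\partial_{\dC} f(\by)$ and $\partial_{\dC}^2 f(\by)$ is nonempty (recalled after Definitions~\ref{def:subdiffs} and~\ref{def:second-subdiffs}). The whole task reduces to compactness.

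The plan is to show that each union $\mathbb{U}_1:=\bigcup_{\by\in\bx+\delta\bbB^n}\partial_{\dC} f(\by)$ and $\mathbb{U}_2:=\bigcup_{\by\in\bx+\delta\bbB^n}\partial_{\dC}^2 f(\by)$ is compact. Then, in the finite-dimensional spaces $\R^n$ and $\R^{n\times n}_{\operatorname{sym}}$, Carathéodory's theorem gives that the convex hull of a compact set is compact. Concretely, $\operatorname{conv}(\mathbb{U})$ is the continuous image of the compact set $\mathbb{U}^{N+1}\times\Delta_{N+1}$, where $N$ is the dimension and $\Delta_{N+1}$ is the simplex.

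Boundedness of the unions is the easy half. Since $f$ is $L$-smooth, it is differentiable everywhere, and $\partial_{\dC} f(\by)=\{\nabla f(\by)\}$ because $\nabla f$ is continuous. Also, at any point of twice differentiability, $\|\nabla^2 f(\bz)\|\le L$ by Lipschitzness of $\nabla f$. Hence every element of $\partial_{\dC}^2 f(\by)$ has norm at most $L$, as limits and convex combinations preserve the bound. On the first-order side, $\|\nabla f(\by)\|\le\|\nabla f(\bx)\|+L\delta$.

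Closedness is the main step. For the first-order union, $\mathbb{U}_1=\nabla f(\bx+\delta\bbB^n)$ is the continuous image of a compact set, so it is compact directly. For the second-order union, I will use the outer semicontinuity (closed graph) of $\by\mapsto\partial_{\dC}^2 f(\by)$. Take $\bA_k\in\partial_{\dC}^2 f(\by_k)$ with $\by_k\in\bx+\delta\bbB^n$, $\by_k\to\by$, and $\bA_k\to\bA$; I need $\bA\in\partial_{\dC}^2 f(\by)$. This is the upper semicontinuity property in \cite[Proposition~2.6.2]{clarke1990optimization}, and I would cite it. If a self-contained argument is wanted, I would proceed as follows:
\begin{itemize}
\item By Carathéodory, write $\bA_k=\sum_{i=1}^{N+1}\lambda_{k,i}\bA_{k,i}$, where each $\bA_{k,i}$ lies in the limsup set $\limsup_{\bz\to\by_k,\,\bz\in\bbD}\{\nabla^2 f(\bz)\}$.
\item Pass to a subsequence along which all weights and all factors converge; boundedness by $L$ allows this.
\item A diagonal argument shows that each limit factor is itself a limit of $\nabla^2 f(\bz_m)$ with $\bz_m\to\by$ and $\bz_m\in\bbD$, so it lies in the limsup set at $\by$.
\item Hence $\bA$ is a convex combination of elements of that set, i.e., $\bA\in\partial_{\dC}^2 f(\by)$.
\end{itemize}
Compactness of the ball then gives closedness of $\mathbb{U}_2$, and the diagonal extraction above is the only delicate point in the argument.
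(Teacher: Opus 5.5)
Your proposal is correct and follows essentially the same route as the paper for $\partial_{\delta}^2 f(\bx)$. Both arguments bound the Clarke generalized Hessians uniformly by $L$, get closedness of the union from outer semicontinuity of $\partial_{\dC}^2 f$ and compactness of $\bx+\delta\bbB^n$, and then use that the convex hull of a compact set is compact. The paper cites Clarke's Proposition~2.6.2(b),(d), Rockafellar--Wets Theorem~5.25(a), and Barvinok for these steps, whereas you also sketch self-contained proofs via Carath\'eodory and a diagonal extraction. The only minor difference is on the first-order side: you use $\partial_{\dC} f(\by)=\{\nabla f(\by)\}$ and continuity of $\nabla f$ to get compactness of $\nabla f(\bx+\delta\bbB^n)$ directly, while the paper invokes Goldstein's compactness result for general locally Lipschitz functions.
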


\begin{proof}
    As $L$-smooth functions are locally Lipschitz by~\cite[Theorem~9.7]{rockafellar2009variational} and~\cite[Theorem~9.2]{rockafellar2009variational}, $\partial_{\delta} f(\bx)$ is compact by~\cite[Proposition~2.3]{goldstein1977optimization}.
    As $\partial_{\dC} f(\bx)\subseteq\partial_{\delta} f(\bx)$ and $\partial_{\dC} f(\bx)\neq\emptyset$ by~\cite[Proposition~4.3.1]{cui2021modern}, we have $\partial_{\delta} f(\bx)\neq\emptyset$. 
    In a similar vein, as $\partial_{\dC}^2 f(\bx)\subseteq\partial_{\delta}^2 f(\bx)$ and $\partial_{\dC}^2 f(\bx)\neq\emptyset$ by~\cite[Proposition~2.6.2(a)]{clarke1990optimization}, it also follows that $\partial_{\delta}^2 f(\bx)\neq\emptyset$.
    It remains to show that $\partial_{\delta}^2 f(\bx)$ is compact.

    Towards that end, we begin by showing that $\bigcup_{\by\in\bx+\delta\bbB^n}\partial_{\dC}^2 f(\by)$ is bounded and closed.
    As $\nabla f$ is $L$-Lipschitz, by~\cite[Proposition~2.6.2(d)]{clarke1990optimization}, the sets $\partial_{\dC}^2 f(\by)$ are uniformly bounded for $\by\in\R^n$; thus, $\bigcup_{\by\in\bx+\delta\bbB^n}\partial_{\dC}^2 f(\by)$ is bounded.
    Besides, as $\bx+\delta\bbB^n$ is compact and $\partial_{\dC}^2 f$ is outer semicontinuous~\cite[Proposition~2.6.2(b)]{clarke1990optimization}, 
    it follows from~\cite[Theorem~5.25(a)]{rockafellar2009variational} that $\bigcup_{\by\in\bx+\delta\bbB^n}\partial_{\dC}^2 f(\by)$ is closed,
    and thus compact. 
    As a result, as the convex hull of any compact set is compact (see, e.g.,~\cite[Corollary~I.2.4]{barvinok2002course}), it follows that $\partial_{\delta}^2 f(\bx)$ is compact, as desired.
\end{proof}



The next result comes without any surprise,
as
$\nabla f_{\sigma}(\bx)$ and $\nabla^2 f_{\sigma}(\bx)$ are the barycenters of 
the pushforward measures $(\nabla f)_{\#}\operatorname{Unif}(\bx+\sigma\bbB^n)$ and $(\nabla^2 f)_{\#}\operatorname{Unif}(\bx+\sigma\bbB^n)$; see Theorem~\ref{thm:smoothing}.

\begin{corollary}\label{cor:connection}
    Let $L,\sigma>0$ and $\bx\in\R^n$ be arbitrary, and $f:\R^n\rightarrow\R$ be $L$-smooth. We have
    $$
        \nabla f_{\sigma}(\bx)\in\partial_{\sigma} f(\bx)\quad\text{and}\quad\nabla^2 f_{\sigma}(\bx)\in\partial_{\sigma}^2 f(\bx).
    $$
\end{corollary}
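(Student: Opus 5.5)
By Theorem~\ref{thm:smoothing}, $\nabla f_{\sigma}(\bx)=\mathbb{E}_{\bu\sim\operatorname{Unif}(\bbB^n)}[\nabla f(\bx+\sigma\bu)]$ and $\nabla^2 f_{\sigma}(\bx)=\mathbb{E}_{\bu\sim\operatorname{Unif}(\bbB^n)}[\nabla^2 f(\bx+\sigma\bu)]$. The plan is to show that each integrand lies almost surely in a compact convex set, and then conclude that its expectation lies in the same set.

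For the first-order claim, $f$ is differentiable everywhere, and $\nabla f$ is continuous. So for every $\by$ the limsup in Definition~\ref{def:subdiffs} contains $\nabla f(\by)$, hence $\nabla f(\by)\in\partial_{\dC} f(\by)$. In fact $\partial_{\dC} f(\by)=\{\nabla f(\by)\}$, although only the inclusion is needed. Thus $\nabla f(\bx+\sigma\bu)\in\partial_{\sigma} f(\bx)$ for every $\bu\in\bbB^n$.

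For the second-order claim, let $\mathbb{D}$ be the set of points where $f$ is twice differentiable. If $\by\in\mathbb{D}$, the constant sequence $\by_k=\by$ shows $\nabla^2 f(\by)\in\limsup_{\bz\to\by,\,\bz\in\mathbb{D}}\{\nabla^2 f(\bz)\}\subseteq\partial_{\dC}^2 f(\by)$. By Rademacher's theorem, $\mathbb{D}$ has full measure, so $\nabla^2 f(\bx+\sigma\bu)\in\partial_{\sigma}^2 f(\bx)$ for $\operatorname{Unif}(\bbB^n)$-a.e.\ $\bu$.

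The main step is the barycenter fact: if $\mathbb{K}$ is a nonempty, convex, compact subset of a finite-dimensional space and $X$ is an integrable random element with $X\in\mathbb{K}$ a.s., then $\mathbb{E}[X]\in\mathbb{K}$. The compactness needed for $\partial_{\sigma} f(\bx)$ and $\partial_{\sigma}^2 f(\bx)$ is given by the preceding lemma. Integrability holds because $\|\nabla^2 f\|\le L$ wherever it exists and $\nabla f$ is continuous on a compact ball.

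I would prove the barycenter fact by contradiction using separating hyperplanes. Suppose $\mathbb{E}[X]\notin\mathbb{K}$. Since $\mathbb{K}$ is closed and convex, there exist a vector (or matrix) $\bA$ and a scalar $\alpha$ such that $\langle\bA,\mathbb{E}[X]\rangle>\alpha\ge\sup_{\bY\in\mathbb{K}}\langle\bA,\bY\rangle$. On the other hand, $\langle\bA,X\rangle\le\alpha$ almost surely, so by linearity of expectation $\langle\bA,\mathbb{E}[X]\rangle=\mathbb{E}\langle\bA,X\rangle\le\alpha$. This contradiction proves the fact.

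Applying the barycenter fact to $X=\nabla f(\bx+\sigma\bu)$ and $\mathbb{K}=\partial_{\sigma} f(\bx)$ gives $\nabla f_{\sigma}(\bx)\in\partial_{\sigma} f(\bx)$. Applying it to $X=\nabla^2 f(\bx+\sigma\bu)$ and $\mathbb{K}=\partial_{\sigma}^2 f(\bx)$ gives $\nabla^2 f_{\sigma}(\bx)\in\partial_{\sigma}^2 f(\bx)$.

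The only subtlety I expect is the measurability of $\bu\mapsto\nabla^2 f(\bx+\sigma\bu)$, which is defined only almost everywhere. This is handled by observing that it is an a.e.\ limit of the measurable difference quotients appearing in the proof of Theorem~\ref{thm:smoothing}.
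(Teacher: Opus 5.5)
Your proposal is correct and follows essentially the same route as the paper. Both arguments use the expectation formulas of Theorem~\ref{thm:smoothing}, observe that $\nabla f(\bx+\sigma\bu)\in\partial_{\sigma} f(\bx)$ (and, wherever it exists, $\nabla^2 f(\bx+\sigma\bu)\in\partial_{\sigma}^2 f(\bx)$), and conclude by contradiction via a separating hyperplane; you simply package this as a general barycenter lemma applied twice and spell out the membership, integrability, and measurability details that the paper leaves implicit.
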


While the proof is almost verbatim that of~\cite[Theorem~3.1]{lin2022gradient}, we include it here for completeness.

\begin{proof}
    If $\nabla f_{\sigma}(\bx)\notin\partial_{\sigma} f(\bx)$, then~\cite[Corollary~11.4.2]{rock1997convex} implies that there exists a $\by\in\R^n$ for which
    $$
    \begin{aligned}
        \max_{\bz\in\partial_{\sigma} f(\bx)}\by^{\T}\bz<\by^{\T}\nabla f_{\sigma}(\bx)=\by^{\T}\mathbb{E}_{\bu\sim\operatorname{Unif}(\bbB^n)}[\nabla f(\bx+\sigma\bu)]=\mathbb{E}_{\bu\sim\operatorname{Unif}(\bbB^n)}[\by^{\T}\nabla f(\bx+\sigma\bu)].
    \end{aligned}
    $$
    However, as 
    $$
        \nabla f(\bx+\sigma\bu)\in\bigcup_{\by\in\bx+\sigma\bbB^n}\partial_{\dC} f(\by)\subseteq\operatorname{conv}\mleft(\bigcup_{\by\in\bx+\sigma\bbB^n}\partial_{\dC} f(\by)\mright)=\partial_{\sigma} f(\bx)
    $$
    whenever it exists, we have
    $$
        \mathbb{E}_{\bu\sim\operatorname{Unif}(\bbB^n)}[\by^{\T}\nabla f(\bx+\sigma\bu)]\le\mathbb{E}_{\bu\sim\operatorname{Unif}(\bbB^n)}\mleft[\max_{\bz\in\partial_{\sigma} f(\bx)}\by^{\T}\bz\mright]=\max_{\bz\in\partial_{\sigma} f(\bx)}\by^{\T}\bz,
    $$
    a contradiction. In a similar vein, if $\nabla^2 f_{\sigma}(\bx)\notin\partial_{\sigma}^2 f(\bx)$, then there exists a $\bY\in\R^{n\times n}$ such that
    $$
    \begin{aligned}
        \max_{\bZ\in\partial_{\sigma}^2 f(\bx)}\langle\bY,\bZ\rangle<\langle\bY,\nabla^2 f_{\sigma}(\bx)\rangle
        =\mathbb{E}_{\bu\sim\operatorname{Unif}(\bbB^n)}[\langle\bY,\nabla^2 f(\bx+\sigma\bu)\rangle]\le\max_{\bZ\in\partial_{\sigma}^2 f(\bx)}\langle\bY,\bZ\rangle,
    \end{aligned}
    $$
    a contradiction.
\end{proof}

The last technical preparation is
the following lemma, which establishes a fundamental connection between the notion of Goldstein FOSPs and that of (ordinary) FOSPs for $L$-smooth functions. 

\begin{lemma}[{\cite[Proposition~6(ii)]{zhang2020complexity}}]\label{lma:gradient-transfer}
    Let $L,\varepsilon>0$ be arbitrary and $f:\R^n\rightarrow\R$ be $L$-smooth. If $\operatorname{dist}\mleft(\bd{0}, \partial_{\frac{\varepsilon}{3 L}} f(\bx)\mright) \le {\varepsilon}/{3}$, then $\|\nabla f(\bx)\| \le \varepsilon$.
\end{lemma}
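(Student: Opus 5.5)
The plan is to argue by contraposition through the definition of the Goldstein subdifferential, using only $L$-Lipschitz continuity of $\nabla f$. Set $\delta:=\varepsilon/(3L)$. Because $f$ is $L$-smooth, it is continuously differentiable. Hence for every $\by$ we have $\partial_{\dC} f(\by)=\{\nabla f(\by)\}$: the limit superior in Definition~\ref{def:subdiffs} along differentiability points is just $\{\nabla f(\by)\}$ by continuity of $\nabla f$. Consequently
$$
\partial_{\delta} f(\bx)=\operatorname{conv}\bigl\{\nabla f(\by):\by\in\bx+\delta\bbB^n\bigr\}.
$$

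Next, I would locate every element of this set near $\nabla f(\bx)$. For each $\by\in\bx+\delta\bbB^n$, Lipschitz continuity gives
$$
\|\nabla f(\by)-\nabla f(\bx)\|\le L\delta=\varepsilon/3.
$$
Thus every generator of the convex hull lies in the closed ball $\nabla f(\bx)+(\varepsilon/3)\bbB^n$. This ball is convex, so the whole set $\partial_{\delta} f(\bx)$ is contained in it.

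Finally, by the hypothesis and the compactness of $\partial_\delta f(\bx)$ (the lemma on topological properties), there is a $\bz\in\partial_\delta f(\bx)$ with $\|\bz\|\le\varepsilon/3$. The triangle inequality then gives
$$
\|\nabla f(\bx)\|\le\|\bz\|+\|\nabla f(\bx)-\bz\|\le\varepsilon/3+\varepsilon/3\le\varepsilon.
$$
Compactness is not actually essential here: an approximate minimizer with $\|\bz\|\le\varepsilon/3+\eta$ works for any $\eta>0$, and letting $\eta\to 0$ gives the same conclusion. The bound is in fact slack, since it yields $2\varepsilon/3$.

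The only step requiring care is the identification $\partial_{\dC} f(\by)=\{\nabla f(\by)\}$ for $C^1$ functions. Even this is not strictly needed. By Rademacher's theorem and continuity, every limit point of $\nabla f(\by_k)$ with $\by_k\to\by$ equals $\nabla f(\by)$. Moreover, differentiability points near $\by\in\bx+\delta\bbB^n$ stay within distance $L\|\by_k-\bx\|\to L\|\by-\bx\|\le\varepsilon/3$ of $\nabla f(\bx)$ in the limit. So the ball containment survives even without the exact identification, and I do not expect any genuine obstacle.
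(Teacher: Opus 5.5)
Your proof is correct. Since $f$ is $C^1$, you get $\partial_{\delta} f(\bx)=\operatorname{conv}\{\nabla f(\by):\by\in\bx+\delta\bbB^n\}\subseteq\nabla f(\bx)+L\delta\,\bbB^n$, and the triangle inequality then yields $\|\nabla f(\bx)\|\le 2\varepsilon/3$. The paper gives no proof of its own and simply cites Zhang et al.\ (Proposition~6(ii)); your containment-plus-triangle-inequality argument is the standard proof of that cited result, so it matches the intended route.
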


We are now in a position to prove 
the desired second-order stationarity transfer property. 

\begin{proposition}\label{prop:Hessian-transfer}
    Let $L,\varepsilon_1,\varepsilon_2,\delta>0$ and $\bx\in\R^n$ be arbitrary, and $f:\R^n\rightarrow\R$ be $L$-smooth. If $\|\nabla f_{\sigma}(\bx)\|\le\varepsilon_g$ and $\lambda_{\min}\bigl(\nabla^2 f_{\sigma}(\bx)\bigr)\ge -\varepsilon_H$, where
    $\varepsilon_g:={\varepsilon_1}/{3}$, $\varepsilon_H:=\varepsilon_2$, and $\sigma:=\min\bigl\{\delta,{\varepsilon_1}/{(3 L)}\bigr\}$,
    then 
    $$
        \|\nabla f(\bx)\| \le \varepsilon_1
        \quad\text{and}\quad
        \min_{\bw\in\bbS^n}\max_{\bA\in\partial_{\delta}^2 f(\bx)}\langle\bA\bw,\bw\rangle\ge-\varepsilon_2.
    $$
\end{proposition}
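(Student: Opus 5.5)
The proof splits into the gradient part and the Hessian part. Each part rests on Corollary~\ref{cor:connection}, applied with the chosen smoothing radius $\sigma=\min\{\delta,\varepsilon_1/(3L)\}$, together with the monotonicity of the Goldstein constructions in the radius.

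For the gradient part, Corollary~\ref{cor:connection} gives $\nabla f_{\sigma}(\bx)\in\partial_{\sigma} f(\bx)$. Since $\sigma\le\varepsilon_1/(3L)$, the ball $\bx+\sigma\bbB^n$ is contained in $\bx+\frac{\varepsilon_1}{3L}\bbB^n$. The union defining the Goldstein subdifferential therefore grows with the radius, and so does its convex hull, which gives $\partial_{\sigma} f(\bx)\subseteq\partial_{\frac{\varepsilon_1}{3L}} f(\bx)$. Hence
$$
\operatorname{dist}\mleft(\bd{0},\partial_{\frac{\varepsilon_1}{3L}} f(\bx)\mright)\le\|\nabla f_{\sigma}(\bx)\|\le\varepsilon_1/3.
$$
Lemma~\ref{lma:gradient-transfer}, applied with $\varepsilon=\varepsilon_1$, then yields $\|\nabla f(\bx)\|\le\varepsilon_1$.

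For the Hessian part, Corollary~\ref{cor:connection} gives $\nabla^2 f_{\sigma}(\bx)\in\partial_{\sigma}^2 f(\bx)$. Since $\sigma\le\delta$, the same monotonicity argument applied to Definition~\ref{def:Goldstein-2nd-subdiff} gives $\partial_{\sigma}^2 f(\bx)\subseteq\partial_{\delta}^2 f(\bx)$. So $\bA_0:=\nabla^2 f_{\sigma}(\bx)$ is a feasible choice in the inner maximization for every direction. For each $\bw\in\bbS^n$,
$$
\max_{\bA\in\partial_{\delta}^2 f(\bx)}\langle\bA\bw,\bw\rangle\ge\bw^{\T}\bA_0\bw\ge\lambda_{\min}(\bA_0)\ge-\varepsilon_2 .
$$
Taking the minimum over $\bw\in\bbS^n$ gives the claim. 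The maxima and minima are attained because $\partial_{\delta}^2 f(\bx)$ is compact by the preceding lemma and $\bbS^n$ is compact. Even without attainment, the paper's convention of identifying max with sup covers this.

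There is no substantial obstacle here. The only points needing care are checking the inclusions of Goldstein sets under a decrease of the radius, and observing that a single uniform witness $\nabla^2 f_{\sigma}(\bx)$ works for all directions $\bw$ at once. The argument in fact proves the stronger statement with the max and min swapped.
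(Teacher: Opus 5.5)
Your proposal is correct and follows the paper's proof essentially step for step. Both use Corollary~\ref{cor:connection} and the monotonicity of the Goldstein sets in the radius, then Lemma~\ref{lma:gradient-transfer} for the gradient bound and the single witness $\nabla^2 f_{\sigma}(\bx)\in\partial_{\delta}^2 f(\bx)$ for the curvature bound (which, as you note, even gives the stronger max--min form).
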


\begin{proof}
    As $\sigma\le{\varepsilon_1}/{(3 L)}$, Corollary~\ref{cor:connection} implies that $\nabla f_{\sigma}(\bx)\in\partial_{\sigma} f(\bx)\subseteq\partial_{\frac{\varepsilon_1}{3 L}} f(\bx)$, and thus
    $$
        \operatorname{dist}\mleft(\bd{0}, \partial_{\frac{\varepsilon_1}{3 L}} f(\bx)\mright)\le
        \operatorname{dist}\bigl(\bd{0}, \partial_{\sigma} f(\bx)\bigr)\le
        \|\nabla f_{\sigma}(\bx)\|\le\varepsilon_g=\frac{\varepsilon_1}{3};
    $$
    i.e., $\|\nabla f(\bx)\| \le \varepsilon_1$ by Lemma~\ref{lma:gradient-transfer}. On the other hand, as $\sigma\le\delta$, Corollary~\ref{cor:connection} also implies that $\nabla^2 f_{\sigma}(\bx)\in\partial_{\sigma}^2 f(\bx)\subseteq\partial_{\delta}^2 f(\bx)$, and thus
    $$
        \min_{\bw\in\bbS^n}\max_{\bA\in\partial_{\delta}^2 f(\bx)}\langle\bA\bw,\bw\rangle\ge\min_{\bw\in\bbS^n}\max_{\bA\in\partial_{\sigma}^2 f(\bx)}\langle\bA\bw,\bw\rangle
        \ge\min_{\bw\in\bbS^n}\bw^{\T}\nabla^2 f_{\sigma}(\bx)\bw
        \ge -\varepsilon_H=-\varepsilon_2,
    $$
    as desired.
\end{proof}

\section{Computing Goldstein approximate SOSPs of $L$-smooth functions}\label{sec:computing-Goldstein-SOSPs}
In this section, we establish the randomized upper bound by applying the cubic Newton method to $f_{\sigma}$, as described in Algorithm~\ref{alg:random-meta},
which invokes Algorithm~\ref{alg:random-smooth} as a subroutine.

\begin{algorithm}[!h]
\begin{algorithmic}[1]
\REQUIRE An $L$-smooth function $f:\R^n\rightarrow\R$, an initial point $\bx_0\in\R^n$, and tolerances $\varepsilon_1,\varepsilon_2,\delta,\eta>0$. 



\STATE Run Algorithm~\ref{alg:random-smooth} with
$$
    \varepsilon_g:=\frac{\varepsilon_1}{3},\quad
    \varepsilon_H:=\varepsilon_2,\quad\text{and}\quad
    \sigma:=\min\mleft\{\delta,\frac{\varepsilon_1}{3 L}\mright\},
$$
while keeping all the other inputs the same as the ones here;

\end{algorithmic}
\caption{
Computing $(\varepsilon_1,\varepsilon_2,\delta)$-Goldstein approximate SOSPs of
$L$-smooth functions
}
\label{alg:random-meta}
\end{algorithm}

The main result of this section, which follows directly from Proposition~\ref{prop:Hessian-transfer} once Algorithm~\ref{alg:random-smooth} has been shown to be sound with high probability, is as follows.
\begin{theorem}\label{thm:main-Goldstein-SOSP}
    Consider the iterates $\bx_0,\ldots,\bx_{K_{\star}}$ generated by Algorithm~\ref{alg:random-meta}. With probability at least $1-\eta$, 
    there exists some $k_{\star}=0,\ldots,K_{\star}-1$ such that
    $$
        \|\nabla f(\bx_{k_{\star}+1})\| \le \varepsilon_1
        \quad\text{and}\quad
        \min_{\bw\in\bbS^n}\max_{\bA\in\partial_{\delta}^2 f(\bx_{k_{\star}+1})}\langle\bA\bw,\bw\rangle\ge-\varepsilon_2.
    $$
\end{theorem}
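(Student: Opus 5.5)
The proof reduces, via Proposition~\ref{prop:Hessian-transfer}, to showing that Algorithm~\ref{alg:random-smooth} (inexact cubic Newton on $f_{\sigma}$) returns, with probability at least $1-\eta$, an iterate $\bx_{k_\star+1}$ with
$$
\|\nabla f_{\sigma}(\bx_{k_\star+1})\|\le\varepsilon_g\quad\text{and}\quad\lambda_{\min}\bigl(\nabla^2 f_{\sigma}(\bx_{k_\star+1})\bigr)\ge-\varepsilon_H .
$$
Once this holds, the parameter choices $\varepsilon_g=\varepsilon_1/3$, $\varepsilon_H=\varepsilon_2$ and $\sigma=\min\{\delta,\varepsilon_1/(3L)\}$ made in Algorithm~\ref{alg:random-meta} are exactly those of Proposition~\ref{prop:Hessian-transfer}. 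That proposition then gives $\|\nabla f(\bx_{k_\star+1})\|\le\varepsilon_1$ together with the Goldstein Hessian condition.

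To certify the cubic Newton guarantee I will proceed in three steps.

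\textbf{Step 1 (smoothness of the surrogate).} By Theorem~\ref{thm:smoothing}, $f_\sigma$ is $L$-smooth and has $M:=cL\sqrt n/\sigma$-Lipschitz Hessian. Moreover, $f_\sigma(\bx_0)-\inf f_\sigma\le \Delta+O(L\sigma^2)$, since $|f_\sigma-f|\le L\sigma^2/2$ for $L$-smooth $f$.

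\textbf{Step 2 (good event).} At each iteration, define the event that
$$
\|\widehat{\bg}_{b}(\bx_k)-\nabla f_\sigma(\bx_k)\|\le \tau_g\quad\text{and}\quad\|\widehat{\bH}_{b'}(\bx_k)-\nabla^2 f_\sigma(\bx_k)\|\le\tau_H,
$$
where I take $\tau_g\asymp\varepsilon_g$ and $\tau_H\asymp\sqrt{M\varepsilon_g}\wedge\varepsilon_H$, following the standard inexact-cubic-regularization analysis (e.g.\ Tripuraneni et al.). Propositions~\ref{prop:gradient-estimator} and~\ref{prop:Hessian-estimator} show that batch sizes $b\gtrsim (L^2\sigma^2/\tau_g^2+L\sigma/\tau_g)\log(K_\star/\eta)$ and $b'\gtrsim (n^2L^2/\tau_H^2+nL/\tau_H)\log(nK_\star/\eta)$ make each event fail with probability at most $\eta/(2K_\star)$. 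A union bound over the $K_\star$ iterations then gives a global good event of probability at least $1-\eta$. Since the samples at iteration $k$ are drawn fresh, independent of the past, it suffices to condition on $\bx_k$ when applying the bounds.

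\textbf{Step 3 (descent argument).} On the good event, I use the standard descent inequality for an (approximately solved) cubic subproblem with regularization $M$. If the step $\bh_k=\bx_{k+1}-\bx_k$ satisfies $\|\bh_k\|\ge r:=\sqrt{\varepsilon_g/M}$ (up to constants), the function value drops:
$$
f_\sigma(\bx_{k+1})\le f_\sigma(\bx_k)-\Omega(M r^3).
$$
If instead $\|\bh_k\|<r$, the first-order optimality of the subproblem plus the estimator errors yields
$$
\|\nabla f_\sigma(\bx_{k+1})\|\le O(Mr^2+\tau_g+\tau_H r)\le\varepsilon_g
$$
and
$$
\lambda_{\min}\bigl(\nabla^2 f_\sigma(\bx_{k+1})\bigr)\ge -O(Mr+\tau_H)\ge-\varepsilon_H .
$$
The choice of $r$ (and of $M$ relative to $\varepsilon_H$, e.g.\ $r\le\varepsilon_H/M$) comes from the algorithm's parameters. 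Telescoping the decrease bounds the number of large steps by $O(\Delta\sqrt M/\varepsilon_g^{3/2}+\cdots)$. Hence, if $K_\star$ is set above this count, some $k_\star\le K_\star-1$ must produce a small step, and $\bx_{k_\star+1}$ has the desired properties.

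The main obstacle is Step 3's matching between the exact quantities in Algorithm~\ref{alg:random-smooth} (its regularization constant, its stopping/selection rule, and whether the subproblem is solved exactly or by a first-order solver) and the error tolerances $\tau_g,\tau_H$. In particular, I need to ensure that the small-step case controls the gradient and Hessian at $\bx_{k+1}$ rather than at $\bx_k$, using the $M$-Lipschitz Hessian from Theorem~\ref{thm:smoothing}. I would first try to verify the inexact cubic-Newton lemma directly with the paper's constants. The probability bookkeeping and the final invocation of Proposition~\ref{prop:Hessian-transfer} are routine.
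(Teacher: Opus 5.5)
Your overall architecture matches the paper's. You reduce via Proposition~\ref{prop:Hessian-transfer} to soundness of Algorithm~\ref{alg:random-smooth}, get a good event from the two Bernstein bounds plus a union bound (conditioning on the past), and run a large-step/small-step dichotomy. In the small-step case you control $\nabla f_\sigma$ and $\lambda_{\min}(\nabla^2 f_\sigma)$ at $\bx_{k+1}$ through the subproblem optimality conditions and the $\xi$-Lipschitz Hessian. (Your Step~1 estimate on $\Delta$ is unnecessary, because the algorithm defines $\Delta:=f_\sigma(\bx_0)-\min f_\sigma$ directly.)

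The genuine gap is the tolerance pairing in Step~2. It comes from analyses where $\varepsilon_H=\sqrt{\rho\varepsilon_g}$, and it does not support Step~3 here. Write $\alpha_k,\delta_k$ for the gradient and Hessian errors and $r_k=\|\bp_k\|$. The large-step case only gives
$$
f_\sigma(\bx_{k+1})\le f_\sigma(\bx_k)-\frac{\xi}{12}r_k^3+\alpha_k r_k+\frac{\delta_k}{2}r_k^2 .
$$
To certify a decrease of order $\xi r^3$ for every $r_k>r$, you therefore need $\alpha_k\lesssim\xi r^2$ and $\delta_k\lesssim\xi r$. Meanwhile, the small-step bound $\lambda_{\min}(\nabla^2 f_\sigma(\bx_{k+1}))\ge-\delta_k-\frac{3\xi}{2}r_k$ forces $r\lesssim\varepsilon_H/\xi$.

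Since $\sigma\le\varepsilon_1/(3L)$, you have $\xi\varepsilon_g\ge cL^2\sqrt{n}$. So in the regime of interest ($\varepsilon_H\ll L$) the binding radius is $r\asymp\varepsilon_H/\xi$, not $\sqrt{\varepsilon_g/\xi}$. The gradient tolerance must then be of order $\varepsilon_H^2/\xi$, far below your $\tau_g\asymp\varepsilon_g$. With $\tau_g\asymp\varepsilon_g$ there is a whole band $\varepsilon_H/\xi\lesssim r_k\lesssim\sqrt{\varepsilon_g/\xi}$ in which two things go wrong at once. The error term $\tau_g r_k$ swamps $\xi r_k^3/12$, so no decrease is certified. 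The curvature bound at $\bx_{k+1}$ is also still too weak. The telescoping argument therefore breaks down.

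The repair is exactly what the algorithm's batch sizes encode. Let $r_\star:=\min\{\sqrt{16\varepsilon_g/(17\xi)},24\varepsilon_H/(37\xi)\}$, and define the good event by $\alpha_k\le\xi r_\star^2/48$ and $\delta_k\le\xi r_\star/24$ for all $k$. Propositions~\ref{prop:gradient-estimator} and~\ref{prop:Hessian-estimator}, with the stated $b_g,b_H$, give failure probability at most $\eta/(2K_\star)$ for each per iteration.

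Then $r_k>r_\star$ yields a decrease of at least $\xi r_\star^3/24$. So $K_\star=\lceil 24\Delta/(\xi r_\star^3)\rceil$ forces some $r_{k_\star}\le r_\star$, and at that iterate
$$
\|\nabla f_\sigma(\bx_{k_\star+1})\|\le\alpha_{k_\star}+\delta_{k_\star}r_{k_\star}+\xi r_{k_\star}^2\le\frac{17}{16}\xi r_\star^2\le\varepsilon_g,
\qquad
\lambda_{\min}(\nabla^2 f_\sigma(\bx_{k_\star+1}))\ge-\frac{37}{24}\xi r_\star\ge-\varepsilon_H .
$$
This also corrects your iteration count: the operative term is $\Delta\xi^2/\varepsilon_H^3$, not $\Delta\sqrt{\xi}/\varepsilon_g^{3/2}$. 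It also shows where the dominant $n^2/\varepsilon^9$ oracle cost comes from, namely the gradient batch $b_g$, which under your $\tau_g$ would have been $O(1)$ up to logarithms.
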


In other words, $(\varepsilon_1,\varepsilon_2,\delta)$-Goldstein SOSP is computable in a randomized manner within 
$$
    K_{\star}\lesssim
    \max\mleft\{\frac{\Delta L^{1/2}n^{1/4}}{\sigma^{1/2}\varepsilon_g^{3/2}},\frac{\Delta L^2 n}{\sigma^2 \varepsilon_H^3}\mright\}
    \asymp
    \max\mleft\{\frac{\Delta L^{1/2}n^{1/4}}{\min\{\delta,{\varepsilon_1}/{L}\}^{1/2}\varepsilon_1^{3/2}},\frac{\Delta L^2 n}{\min\{\delta,{\varepsilon_1}/{L}\}^2 \varepsilon_2^3}\mright\}
$$
outer loops of Algorithm~\ref{alg:random-smooth},
which is independent of $\eta$. In particular, when $\varepsilon_1,\varepsilon_2,\delta\asymp\varepsilon$ with $\varepsilon$ small enough and $L\gg 1$,
we have
$$
    K_{\star}
    \lesssim
    {\Delta L^4 n}/{\varepsilon^5},
$$
and the total number of first-order oracle calls is, modulo logarithmic factors, of order
$$
    {\Delta L^8 n^2}/{\varepsilon^9}+{\Delta L^6 n^3}/{\varepsilon^7}.
$$

In closing, we note that the cubic Newton subproblem involved in Algorithm~\ref{alg:random-smooth} is polynomial-time solvable; see, e.g.,~\cite{griewank1981modification},~\cite[Theorem~3.1]{cartis2011adaptive1},~\cite[Section~5.1]{nesterov2006cubic}, and~\cite{zhou2025tight}.
For ease of exposition, we also introduce the following shorthands for the quantities involved in Algorithm~\ref{alg:random-smooth}:
$$
    r_k:=\|\bp_k\|,\quad
    \bq_k:=\widehat{\bg}_{b_g}(\bx_k)-\nabla f_{\sigma}(\bx_k),\quad
    \bQ_k:=\widehat{\bH}_{b_H}(\bx_k)-\nabla^2 f_{\sigma}(\bx_k),\quad
    \alpha_k:=\|\bq_k\|,\quad
    \delta_k:=\|\bQ_k\|.
$$


\subsection{Proof of Theorem~\ref{thm:main-Goldstein-SOSP}: Computing SOSPs of $f_{\sigma}$}
In this subsection, we prove in a discussion manner 
that
Algorithm~\ref{alg:random-smooth} is sound with high probability,
which, as discussed earlier, suffices to imply the main result Theorem~\ref{thm:main-Goldstein-SOSP}.

\begin{theorem}\label{thm:soundedness}
    Consider the iterates $\bx_0,\ldots,\bx_{K_{\star}}$ generated by Algorithm~\ref{alg:random-smooth}. With probability at least $1-\eta$, 
    there exists some $k_{\star}=0,\ldots,K_{\star}-1$ such that
    $$
        \|\nabla f_{\sigma}(\bx_{k_{\star}+1})\|\le\varepsilon_g
        \quad\text{and}\quad
        \lambda_{\min}\bigl(\nabla^2 f_{\sigma}(\bx_{k_{\star}+1})\bigr)\ge -\varepsilon_H.
    $$
\end{theorem}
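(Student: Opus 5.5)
We analyse Algorithm~\ref{alg:random-smooth} as an inexact cubic Newton method on $f_{\sigma}$. By Theorem~\ref{thm:smoothing}, $f_{\sigma}$ has $M$-Lipschitz Hessian with $M:=cL\sqrt{n}/\sigma$. The algorithm presumably uses cubic regularization parameter $M$ (or a constant multiple of it) and batch sizes $b_g,b_H$. The plan has three steps: (i) put all iterates on a high-probability event where the estimator errors are small; (ii) on that event, prove a per-iteration sufficient decrease of $f_{\sigma}$ whenever $\bx_{k+1}$ is not an approximate SOSP; (iii) telescope against $\Delta\ge f_{\sigma}(\bx_0)-\inf f_{\sigma}$, which gives $K_{\star}$.

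\textbf{Step (i): the good event.} Let $\mathcal{E}:=\{\alpha_k\le c_1\varepsilon_g,\ \delta_k\le c_2\sqrt{M\varepsilon_g}\wedge c_2\varepsilon_H \text{ for all } k<K_{\star}\}$ for small absolute constants $c_1,c_2$. Conditionally on $\bx_k$, the fresh samples are independent of the past. Proposition~\ref{prop:gradient-estimator} and Proposition~\ref{prop:Hessian-estimator} then give per-step failure probabilities at most $\eta/(2K_{\star})$ each, provided $b_g\gtrsim (L^2\sigma^2/\varepsilon_g^2+L\sigma/\varepsilon_g)\log(K_{\star}/\eta)$ and $b_H\gtrsim (n^2L^2/\varepsilon_H^2+nL/\varepsilon_H)\log(nK_{\star}/\eta)$. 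A union bound over $k$ yields $\mathbb{P}(\mathcal{E})\ge 1-\eta$. Since $K_{\star}$ is deterministic, no stopping-time subtleties arise.

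\textbf{Step (ii): descent and stationarity on $\mathcal{E}$.} Here $\bp_k$ is the global minimizer of the model $m_k(\bp)=\langle\widehat{\bg},\bp\rangle+\tfrac12\bp^{\T}\widehat{\bH}\bp+\tfrac M6\|\bp\|^3$, with $\bx_{k+1}=\bx_k+\bp_k$. We use the standard optimality conditions $\widehat{\bg}+\widehat{\bH}\bp_k+\tfrac M2 r_k\bp_k=\bd{0}$ and $\widehat{\bH}+\tfrac M2 r_k\bI\succeq 0$, and follow Nesterov--Polyak.
\begin{itemize}
\item \emph{Decrease.} Lipschitz-Hessian Taylor expansion gives $f_{\sigma}(\bx_{k+1})\le f_{\sigma}(\bx_k)-\tfrac{M}{12}r_k^3+\alpha_k r_k+\tfrac12\delta_k r_k^2$.
\item \emph{Gradient at the new point.} $\|\nabla f_{\sigma}(\bx_{k+1})\|\le M r_k^2+\alpha_k+\delta_k r_k$.
\item \emph{Hessian at the new point.} $\lambda_{\min}(\nabla^2 f_{\sigma}(\bx_{k+1}))\ge -\tfrac{3M}{2}r_k-\delta_k$.
\end{itemize}
If $\bx_{k+1}$ fails the target, then on $\mathcal{E}$ these bounds force $r_k\gtrsim\min\{\sqrt{\varepsilon_g/M},\,\varepsilon_H/M\}$. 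The error terms are then absorbed: Young's inequality gives $\alpha_k r_k\le \tfrac{M}{48}r_k^3+O(\alpha_k^{3/2}M^{-1/2})$, and similarly for $\delta_k r_k^2$. The result is a decrease of order $\min\{\varepsilon_g^{3/2}/M^{1/2},\,\varepsilon_H^3/M^2\}$.

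\textbf{Step (iii): telescoping and main obstacle.} Summing the decrease over all failing steps shows that this cannot persist for $K_{\star}\asymp\max\{\Delta M^{1/2}\varepsilon_g^{-3/2},\,\Delta M^2\varepsilon_H^{-3}\}$ steps, since $f_{\sigma}$ is bounded below. Some $k_{\star}<K_{\star}$ must therefore succeed, and with $M=cL\sqrt n/\sigma$ this matches the stated $K_{\star}$.

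The hardest part is the bookkeeping in step (ii). The error-absorption inequalities must be arranged so that the tolerances imposed on $\alpha_k,\delta_k$ are consistent with the case analysis. Concretely, this means choosing $c_1,c_2$ so that the residual $O(\alpha_k^{3/2}M^{-1/2}+\delta_k^3M^{-2})$ is at most half of the guaranteed decrease, and checking that the gradient and Hessian failure cases each yield the claimed lower bound on $r_k$. I would carry out that verification first and then read off the batch sizes.
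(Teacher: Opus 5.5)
\textbf{There is a genuine gap in Step (ii).} It sits in the bookkeeping you deferred: the gradient-error tolerance $\alpha_k\le c_1\varepsilon_g$, with $c_1$ an absolute constant, is too loose.

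In the Hessian-failure case you only know $r_k\gtrsim\varepsilon_H/M$. So the certified decrease $\frac{M}{12}r_k^3$ can be as small as order $\varepsilon_H^3/M^2$. The Young residual from the gradient error is $O(\alpha_k^{3/2}M^{-1/2})=O(c_1^{3/2}\varepsilon_g^{3/2}M^{-1/2})$. This residual is at most half the decrease only if $c_1\lesssim\varepsilon_H^2/(M\varepsilon_g)$, which is not an absolute constant.

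More directly, take $r_k\approx 2\varepsilon_H/M$. Then $\alpha_k r_k\approx c_1\varepsilon_g r_k$ exceeds $\frac{M}{12}r_k^3\approx\frac{\varepsilon_H^2}{3M}r_k$ as soon as $\varepsilon_g\gg\varepsilon_H^2/M$. The upper bound on $f_{\sigma}(\bx_{k+1})-f_{\sigma}(\bx_k)$ is then positive, no descent is certified, and the telescoping in Step (iii) collapses.

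This is not a corner case. With $\varepsilon_g=\varepsilon_1/3$, $\varepsilon_H=\varepsilon_2$, $\sigma\asymp\varepsilon/L$ and $M=cL\sqrt{n}/\sigma$, one gets $\varepsilon_H^2/(M\varepsilon_g)\asymp\varepsilon^2/(L^2\sqrt{n})\ll 1$. So the Hessian condition is the binding one, and it is exactly this regime that produces the dominant $\Delta L^4 n/\varepsilon^5$ term in $K_{\star}$.

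\textbf{The fix.} Calibrate both tolerances to the single radius $r_\star\asymp\min\{\sqrt{\varepsilon_g/M},\varepsilon_H/M\}$:
require $\alpha_k\le Mr_\star^2/48$, i.e.\ $\alpha_k\lesssim\min\{\varepsilon_g,\varepsilon_H^2/M\}$;
require $\delta_k\le Mr_\star/24$, which your $\delta_k$ tolerance already essentially is.

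This is what the paper's event $\mathcal{E}$ does, with $M=\xi$. It is also why the algorithm's $b_g$ scales like $L^2\sigma^2/(\xi^2 r_\star^4)$ rather than your $L^2\sigma^2/\varepsilon_g^2$. Similarly, your stated $b_H$ needs $\min\{M\varepsilon_g,\varepsilon_H^2\}$ in place of $\varepsilon_H^2$ to match your own threshold.

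With these tolerances the case split simplifies and Young's inequality becomes unnecessary:
\begin{itemize}
\item If $r_k>r_\star$, then $\alpha_k r_k+\frac12\delta_k r_k^2<\frac{M}{24}r_k^3$, so $f_{\sigma}$ drops by more than $Mr_\star^3/24$.
\item If $r_k\le r_\star$, your gradient and Hessian bounds at $\bx_{k+1}$ give $\|\nabla f_{\sigma}(\bx_{k+1})\|\le\frac{17}{16}Mr_\star^2\le\varepsilon_g$ and $\lambda_{\min}\bigl(\nabla^2 f_{\sigma}(\bx_{k+1})\bigr)\ge-\frac{37}{24}Mr_\star\ge-\varepsilon_H$.
\end{itemize}
Telescoping with $K_{\star}=\lceil 24\Delta/(Mr_\star^3)\rceil$ forces the second case to occur at some $k_{\star}<K_{\star}$. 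The rest of your outline is correct as written: the three per-step inequalities, conditional concentration given the past, and a union bound over the deterministic $K_{\star}$.
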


\begin{algorithm}[!h]
\begin{algorithmic}[1]
\REQUIRE An $L$-smooth function $f:\R^n\rightarrow\R$, an initial point $\bx_0\in\R^n$, and tolerances $\varepsilon_g,\varepsilon_H,\sigma,\eta>0$. 



\STATE Let 
$\xi:={c L \sqrt{n}}/{\sigma}$ and
$$
    K_{\star}:=\left\lceil\frac{24\Delta}{\xi r_{\star}^3}\right\rceil,
$$
where 
$$
    r_{\star}:=\min\mleft\{\sqrt{\frac{16 \varepsilon_g}{17\xi}},\frac{24 \varepsilon_H}{37\xi}\mright\}>0
    \quad\text{and}\quad
    \Delta:=f_{\sigma}(\bx_0)-\min_{\bx\in\R^n}f_{\sigma}(\bx);
$$

\FOR{$k=0,\ldots,K_{\star}-1$}

    \STATE Sample $\bu_1,\ldots,\bu_{b_g}\sim\operatorname{Unif}(\bbB^n)$ independently, and construct
    $$
        \widehat{\bg}_{b_g}(\bx_k):=\frac{1}{b_g}\sum_{j=1}^{b_g} \frac{\nabla f(\bx_k+\sigma\bu_j)+\nabla f(\bx_k-\sigma\bu_j)}{2},
    $$
    where
    $$
        b_g:=\left\lceil\left(\frac{4608 L^2\sigma^2}{\xi^2 r_{\star}^4}+\frac{64 L\sigma}{\xi r_{\star}^2}\right)\log\mleft(\frac{4 K_{\star}}{\eta}\mright)\right\rceil;
    $$

    \STATE Sample $\bs_1,\ldots,\bs_{b_H}\sim\operatorname{Unif}(\bbS^n)$ independently, and construct
    $$
        \widehat{\bH}_{b_H}(\bx_k):=\frac{1}{b_H}\sum_{j=1}^{b_H} \operatorname{Sym}\mleft(\frac{n}{2\sigma}\bigl(\nabla f(\bx_k+\sigma\bs_j)-\nabla f(\bx_k-\sigma\bs_j)\bigr)\bs_j^{\T}\mright),
    $$
    where 
    $$
        b_H:=\left\lceil\left(\frac{1152 (n+1)^2 L^2}{\xi^2 r_{\star}^2}+\frac{16(n+1)L}{\xi r_{\star}}\right)\log\mleft(\frac{4 n K_{\star}}{\eta}\mright)\right\rceil;
    $$

    \STATE Compute $\bp_k\in\R^n$ as an arbitrary global minimizer of
    \begin{equation}\label{eq:cubic-Newton-subproblem}
        \min\mleft\{\widehat{m}_k(\bp):=\bp^{\T}\widehat{\bg}_{b_g}(\bx_k)+\frac{1}{2}\bp^{\T}\widehat{\bH}_{b_H}(\bx_k)\bp+\frac{\xi}{6}\|\bp\|^3:\bp\in\R^n\mright\};
    \end{equation}

    \STATE Update $\bx_{k+1}\leftarrow\bx_k+\bp_k$;

\ENDFOR
\end{algorithmic}
\caption{
Computing an $(\varepsilon_g,\varepsilon_H)$-approximate SOSP of 
$f_{\sigma}$
w.p.\ at least $1-\eta$
}
\label{alg:random-smooth}
\end{algorithm}



To this end, we first recall~\cite[Theorem~3.1]{cartis2011adaptive1} (adapted to our setting), which characterizes the global minimizers of (\ref{eq:cubic-Newton-subproblem}).

\begin{lemma}[{\cite[Theorem~3.1]{cartis2011adaptive1}}]\label{lma:subproblem-optimality}
    A vector $\bp\in\R^n$ is a global minimizer of (\ref{eq:cubic-Newton-subproblem}) if and only if 
    $$
        \bigl(\widehat{\bH}_{b_H}(\bx_k)+\lambda\bI\bigr)\bp=-\widehat{\bg}_{b_g}(\bx_k)
        \quad\text{and}\quad
        \widehat{\bH}_{b_H}(\bx_k)+\lambda\bI\succeq\bO,
        \quad\text{where $\lambda:={\xi\|\bp\|}/{2}$}.
    $$
\end{lemma}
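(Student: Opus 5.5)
This lemma is quoted directly from \cite[Theorem~3.1]{cartis2011adaptive1}, so the plan is to verify that its hypotheses hold and, for completeness, recall the standard argument. Write $\bg:=\widehat{\bg}_{b_g}(\bx_k)$ and $\bH:=\widehat{\bH}_{b_H}(\bx_k)$; by construction $\bH$ is symmetric because of the $\operatorname{Sym}$ operation. The objective is $\widehat{m}_k(\bp)=\bp^{\T}\bg+\frac12\bp^{\T}\bH\bp+\frac{\xi}{6}\|\bp\|^3$ with $\xi>0$, which matches the setting of Cartis--Gould--Toint with their $\sigma_k$ replaced by $\xi/2$. Hence it suffices to show both directions.

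\emph{Sufficiency.} Suppose $(\bH+\lambda\bI)\bp=-\bg$ and $\bH+\lambda\bI\succeq\bO$ with $\lambda=\xi\|\bp\|/2$. For any $\bv$, I will expand $\widehat{m}_k(\bv)-\widehat{m}_k(\bp)$ and use $\bg=-(\bH+\lambda\bI)\bp$ to rewrite the linear and quadratic terms as
\[
\tfrac12(\bv-\bp)^{\T}(\bH+\lambda\bI)(\bv-\bp)-\tfrac{\lambda}{2}\bigl(\|\bv\|^2-\|\bp\|^2\bigr)+\tfrac{\xi}{6}\bigl(\|\bv\|^3-\|\bp\|^3\bigr).
\]
The first term is nonnegative by the semidefiniteness condition. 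Substituting $\lambda=\xi\|\bp\|/2$ and setting $a:=\|\bp\|$, $s:=\|\bv\|$, the remaining terms equal $\frac{\xi}{12}(s-a)^2(2s+a)\ge0$. This is the only computation, and it is elementary.

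\emph{Necessity.} Let $\bp$ be a global minimizer. The first-order condition $\nabla\widehat{m}_k(\bp)=\bd{0}$ gives $\bg+\bH\bp+\frac{\xi}{2}\|\bp\|\bp=\bd{0}$, which is the linear equation. The semidefiniteness condition is the main obstacle, since local second-order conditions only give $\bH+\lambda\bI+\frac{\xi}{2\|\bp\|}\bp\bp^{\T}\succeq\bO$. Following Cartis--Gould--Toint, I will instead use global optimality. Take any $\bw$ with $\|\bp+\bw\|=\|\bp\|$; since the cubic term is unchanged, the identity above gives $\widehat{m}_k(\bp+\bw)-\widehat{m}_k(\bp)=\frac12\bw^{\T}(\bH+\lambda\bI)\bw\ge0$. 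Directions $\bw$ of this type, obtained for example as $\bw=t\bu-\bp$ with $t$ chosen so that $\|t\bu\|=\|\bp\|$, span a dense set of directions whenever $\bu^{\T}\bp\neq0$, so continuity yields $\bH+\lambda\bI\succeq\bO$. The case $\bp=\bd{0}$ is handled separately: the stationarity condition forces $\bg=\bd{0}$, and $\widehat{m}_k(t\bu)=\frac{t^2}{2}\bu^{\T}\bH\bu+O(t^3)\ge0$ then gives $\bH\succeq\bO$.
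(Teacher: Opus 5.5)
Your proposal is correct and follows the paper, which simply observes that $\widehat{\bH}_{b_H}(\bx_k)$ is symmetric because of the $\operatorname{Sym}$ step and then invokes \cite[Theorem~3.1]{cartis2011adaptive1}, i.e., the Cartis--Gould--Toint result with $\sigma_k=\xi/2$. Your self-contained reproduction of that theorem is also sound: the remainder $\frac{\xi}{12}(s-a)^2(2s+a)$ checks out, and the density step is cleanest if you note that each unit $\bdd$ with $\bdd^{\T}\bp\neq0$ gives the equal-norm perturbation $\bw=-2(\bdd^{\T}\bp)\bdd$, hence $\bdd^{\T}(\bH+\lambda\bI)\bdd\ge0$, with continuity covering $\bdd\perp\bp$.
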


\begin{proof}
    It suffices to note that $\widehat{\bH}_{b_H}(\bx_k)$ is symmetric, and then apply~\cite[Theorem~3.1]{cartis2011adaptive1}.
\end{proof}

We begin the analysis by understanding the model decrease for (\ref{eq:cubic-Newton-subproblem}). While it is also known, we nonetheless include a short proof here to familiarize the readers with the notation.

\begin{lemma}[{\cite[Lemma~4]{nesterov2006cubic}}]\label{lma:mk-pk}
    We have $\widehat{m}_k(\bp_k)\le -\xi r_k^3/12$.
\end{lemma}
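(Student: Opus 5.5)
The plan is to rely only on the optimality characterization in Lemma~\ref{lma:subproblem-optimality}, applied at $\bp=\bp_k$ with $\lambda_k:=\xi r_k/2$. Write $\widehat{\bg}:=\widehat{\bg}_{b_g}(\bx_k)$ and $\widehat{\bH}:=\widehat{\bH}_{b_H}(\bx_k)$. The lemma gives the stationarity relation $\widehat{\bg}=-(\widehat{\bH}+\lambda_k\bI)\bp_k$ and the curvature condition $\widehat{\bH}+\lambda_k\bI\succeq\bO$. Substituting the first into the definition of $\widehat{m}_k$ removes the linear term. The result is an expression in the quadratic form of $\widehat{\bH}$ and in $r_k$ only.

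Carrying out the substitution, $\bp_k^{\T}\widehat{\bg}=-\bp_k^{\T}\widehat{\bH}\bp_k-\lambda_k r_k^2$, and therefore
$$
\widehat{m}_k(\bp_k)=-\frac{1}{2}\bp_k^{\T}\widehat{\bH}\bp_k-\lambda_k r_k^2+\frac{\xi}{6}r_k^3 .
$$
Next I rewrite $-\tfrac12\bp_k^{\T}\widehat{\bH}\bp_k=-\tfrac12\bp_k^{\T}(\widehat{\bH}+\lambda_k\bI)\bp_k+\tfrac12\lambda_k r_k^2$. This gives
$$
\widehat{m}_k(\bp_k)=-\frac{1}{2}\bp_k^{\T}\bigl(\widehat{\bH}+\lambda_k\bI\bigr)\bp_k-\frac{1}{2}\lambda_k r_k^2+\frac{\xi}{6}r_k^3 .
$$
The first term is nonpositive by the curvature condition. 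Plugging in $\lambda_k=\xi r_k/2$, the remaining terms equal $-\xi r_k^3/4+\xi r_k^3/6=-\xi r_k^3/12$, which is the claimed bound.

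No real obstacle is expected. The only care needed is to add and subtract $\lambda_k\bI$ so that the positive semidefinite matrix $\widehat{\bH}+\lambda_k\bI$ appears with a minus sign. No property of the estimators beyond symmetry of $\widehat{\bH}$ is used, and symmetry is already built into Lemma~\ref{lma:subproblem-optimality}. The bound therefore holds deterministically, for every realization of the samples.
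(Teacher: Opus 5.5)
Your proof is correct and essentially matches the paper's. Both use Lemma~\ref{lma:subproblem-optimality} to eliminate the linear term and then apply $\widehat{\bH}_{b_H}(\bx_k)+\tfrac{\xi r_k}{2}\bI\succeq\bO$; the paper states this as the direct bound $\bp_k^{\T}\widehat{\bH}_{b_H}(\bx_k)\bp_k\ge-\xi r_k^3/2$ inside $-\tfrac12\bp_k^{\T}\widehat{\bH}_{b_H}(\bx_k)\bp_k-\tfrac{\xi}{3}r_k^3$, where you add and subtract $\lambda_k\bI$, which is the same inequality.
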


\begin{proof}
    As $\bp_k$ is a global minimizer of (\ref{eq:cubic-Newton-subproblem}), it follows from Lemma~\ref{lma:subproblem-optimality} that
    $$
        \bigl(\widehat{\bH}_{b_H}(\bx_k)+\lambda\bI\bigr)\bp_k+\widehat{\bg}_{b_g}(\bx_k)=\bd{0}
        \quad\implies\quad
        \bp_k^{\T}\widehat{\bH}_{b_H}(\bx_k)\bp_k+\lambda r_k^2+\bp_k^{\T}\widehat{\bg}_{b_g}(\bx_k)=0,
    $$
    where $\lambda={\xi r_k}/{2}$;
    i.e.,
    $$
        \bp_k^{\T}\widehat{\bg}_{b_g}(\bx_k)=-\bp_k^{\T}\widehat{\bH}_{b_H}(\bx_k)\bp_k-\frac{\xi}{2} r_k^3.
    $$
    Hence,
    $$
        \widehat{m}_k(\bp_k)=\bp_k^{\T}\widehat{\bg}_{b_g}(\bx_k)+\frac{1}{2}\bp_k^{\T}\widehat{\bH}_{b_H}(\bx_k)\bp_k+\frac{\xi}{6}r_k^3=-\frac{1}{2}\bp_k^{\T}\widehat{\bH}_{b_H}(\bx_k)\bp_k-\frac{\xi}{3} r_k^3.
    $$
    However, Lemma~\ref{lma:subproblem-optimality} also implies that 
    $$
        \widehat{\bH}_{b_H}(\bx_k)\succeq-{\xi r_k\bI}/{2}
        \quad\implies\quad
        \bp_k^{\T}\widehat{\bH}_{b_H}(\bx_k)\bp_k\ge-{\xi r_k^3}/{2}.
    $$
    As a result, it follows that
    $$
        \widehat{m}_k(\bp_k)\le \frac{\xi}{4} r_k^3-\frac{\xi}{3} r_k^3=-\frac{\xi}{12} r_k^3,
    $$
    as desired.
\end{proof}

With the model decrease well understood, we next figure out how it transfers to a decrease in $f_{\sigma}$.

\begin{lemma}\label{lma:real-decrease}
    We have $f_{\sigma}(\bx_k+\bp_k) \le f_{\sigma}(\bx_k) -{\xi r_k^3}/{12} + \alpha_k r_k + \delta_k r_k^2/2$.
\end{lemma}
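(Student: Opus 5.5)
We will combine the Hessian Lipschitz property of $f_{\sigma}$ from Theorem~\ref{thm:smoothing} with the model decrease of Lemma~\ref{lma:mk-pk}, and account for the estimation errors $\bq_k,\bQ_k$.

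\emph{Step 1: cubic upper bound for $f_{\sigma}$.} By Theorem~\ref{thm:smoothing}, $f_{\sigma}$ is twice differentiable with $\nabla^2 f_{\sigma}$ being $\xi$-Lipschitz, where $\xi=cL\sqrt{n}/\sigma$ is the same constant used in Algorithm~\ref{alg:random-smooth}. In particular, $\nabla^2 f_{\sigma}$ is continuous, so $f_{\sigma}$ is $C^2$ and Taylor's theorem with integral remainder applies along the segment from $\bx_k$ to $\bx_k+\bp_k$. Bounding the remainder with the Lipschitz constant gives the standard Nesterov--Polyak estimate
$$
f_{\sigma}(\bx_k+\bp_k)\le f_{\sigma}(\bx_k)+\bp_k^{\T}\nabla f_{\sigma}(\bx_k)+\frac12\bp_k^{\T}\nabla^2 f_{\sigma}(\bx_k)\bp_k+\frac{\xi}{6}r_k^3.
$$
The only care needed is to justify the integral form of the remainder. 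One integration of $t\mapsto\nabla f_{\sigma}(\bx_k+t\bp_k)^{\T}\bp_k$ is fine because $\nabla f_{\sigma}$ is Lipschitz. A second integration is fine because $\nabla^2 f_{\sigma}$ is continuous.

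\emph{Step 2: replace true derivatives by estimators.} Substitute $\nabla f_{\sigma}(\bx_k)=\widehat{\bg}_{b_g}(\bx_k)-\bq_k$ and $\nabla^2 f_{\sigma}(\bx_k)=\widehat{\bH}_{b_H}(\bx_k)-\bQ_k$. The right-hand side of Step~1 then becomes
$$
f_{\sigma}(\bx_k)+\widehat{m}_k(\bp_k)-\bp_k^{\T}\bq_k-\frac12\bp_k^{\T}\bQ_k\bp_k.
$$
By Cauchy--Schwarz and the definition of the spectral norm,
$$
-\bp_k^{\T}\bq_k\le\alpha_k r_k\quad\text{and}\quad-\frac12\bp_k^{\T}\bQ_k\bp_k\le\frac{\delta_k r_k^2}{2}.
$$

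\emph{Step 3: conclude.} Apply Lemma~\ref{lma:mk-pk}, which gives $\widehat{m}_k(\bp_k)\le-\xi r_k^3/12$. Substituting this yields exactly
$$
f_{\sigma}(\bx_k+\bp_k)\le f_{\sigma}(\bx_k)-\frac{\xi r_k^3}{12}+\alpha_k r_k+\frac{\delta_k r_k^2}{2}.
$$

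There is no serious obstacle. The one point requiring attention is confirming that the cubic coefficient in the model, $\xi/6$, matches the Hessian Lipschitz constant $\xi$ of Theorem~\ref{thm:smoothing}. Since the algorithm sets $\xi:=cL\sqrt{n}/\sigma$, which is exactly that constant, the two agree.
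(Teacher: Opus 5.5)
Your proposal is correct and follows the paper's proof essentially step for step. Both arguments use the cubic upper bound from the $\xi$-Lipschitz Hessian; the paper simply cites Nesterov--Polyak's Lemma~1 rather than re-deriving it through the integral remainder. Both then substitute $\nabla f_{\sigma}(\bx_k)=\widehat{\bg}_{b_g}(\bx_k)-\bq_k$ and $\nabla^2 f_{\sigma}(\bx_k)=\widehat{\bH}_{b_H}(\bx_k)-\bQ_k$ to expose $\widehat{m}_k(\bp_k)$, and finish with Lemma~\ref{lma:mk-pk} and Cauchy--Schwarz.
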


In comparison with~\cite[Lemma~4]{nesterov2006cubic}, we see that the extra term $\alpha_k r_k + \delta_k r_k^2/2$ is exactly the price we need to pay for using inexact gradients and Hessians.

\begin{proof}
    As $\nabla^2 f_{\sigma}$ is $\xi$-Lipschitz (w.r.t.\ the spectral norm), it follows from~\cite[Lemma~1]{nesterov2006cubic} that
    $$
    \begin{aligned}
        f_{\sigma}(\bx_k+\bp_k) &\le f_{\sigma}(\bx_k)+\bp_k^{\T} \nabla f_{\sigma}(\bx_k)+\frac{1}{2} \bp_k^{\T} \nabla^2 f_{\sigma}(\bx_k) \bp_k + \frac{\xi}{6}r_k^3\\
        &= f_{\sigma}(\bx_k)+\bp_k^{\T}\bigl(\widehat{\bg}_{b_g}(\bx_k)-\bq_k\bigr)+\frac{1}{2} \bp_k^{\T}\bigl(\widehat{\bH}_{b_H}(\bx_k)-\bQ_k\bigr)\bp_k + \frac{\xi}{6}r_k^3\\
        &= f_{\sigma}(\bx_k) +\widehat{m}_k(\bp_k) -\bp_k^{\T}\bq_k - \frac{1}{2} \bp_k^{\T}\bQ_k\bp_k\\
        &\le f_{\sigma}(\bx_k) -\frac{\xi r_k^3}{12} + r_k\|\bq_k\| + \frac{1}{2} r_k^2 \|\bQ_k\| ,
    \end{aligned}
    $$
    where the second inequality follows from Lemma~\ref{lma:mk-pk} and the Cauchy--Schwarz inequality.
\end{proof}


With Lemma~\ref{lma:real-decrease} in place, we next turn to bound the gradient and Hessian of $f_{\sigma}$ at $\bx_k+\bp_k$.

\begin{lemma}\label{lma:bounds-xk+1}
    We have $\|\nabla f_{\sigma}(\bx_k+\bp_k)\|\le \alpha_k+\delta_k r_k+\xi r_k^2$ and $\lambda_{\min}\bigl(\nabla^2 f_{\sigma}(\bx_k+\bp_k)\bigr)\ge -\delta_k -{3\xi r_k}/{2}$.
\end{lemma}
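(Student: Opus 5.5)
We follow the standard inexact cubic-regularization argument of Nesterov--Polyak, using the first-order optimality conditions of the subproblem from Lemma~\ref{lma:subproblem-optimality} together with the $\xi$-Lipschitz continuity of $\nabla^2 f_{\sigma}$ from Theorem~\ref{thm:smoothing}. Throughout, write $\widehat{\bg}:=\widehat{\bg}_{b_g}(\bx_k)$, $\widehat{\bH}:=\widehat{\bH}_{b_H}(\bx_k)$ and $\lambda=\xi r_k/2$.

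\textbf{Gradient bound.} Lemma~\ref{lma:subproblem-optimality} gives the stationarity condition
$$
\widehat{\bg}+\widehat{\bH}\bp_k=-\lambda\bp_k .
$$
First we split the gradient at the new point as
$$
\nabla f_{\sigma}(\bx_k+\bp_k)=\bigl[\nabla f_{\sigma}(\bx_k+\bp_k)-\nabla f_{\sigma}(\bx_k)-\nabla^2 f_{\sigma}(\bx_k)\bp_k\bigr]+\bigl[\nabla f_{\sigma}(\bx_k)+\nabla^2 f_{\sigma}(\bx_k)\bp_k\bigr].
$$
The first bracket is the Taylor remainder. Since $\nabla^2 f_{\sigma}$ is $\xi$-Lipschitz, the integral form of the remainder (\cite[Lemma~1]{nesterov2006cubic}) bounds its norm by $\xi r_k^2/2$.

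For the second bracket, substitute $\nabla f_{\sigma}(\bx_k)=\widehat{\bg}-\bq_k$ and $\nabla^2 f_{\sigma}(\bx_k)=\widehat{\bH}-\bQ_k$. Using the stationarity condition, it becomes
$$
-\lambda\bp_k-\bq_k-\bQ_k\bp_k,
$$
whose norm is at most $\xi r_k^2/2+\alpha_k+\delta_k r_k$. Adding the two pieces gives $\alpha_k+\delta_k r_k+\xi r_k^2$, as claimed.

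\textbf{Hessian bound.} The second condition of Lemma~\ref{lma:subproblem-optimality} gives $\widehat{\bH}\succeq-\lambda\bI$. Hence
$$
\nabla^2 f_{\sigma}(\bx_k)=\widehat{\bH}-\bQ_k\succeq-(\xi r_k/2+\delta_k)\bI .
$$
The $\xi$-Lipschitz continuity of $\nabla^2 f_{\sigma}$ then gives $\nabla^2 f_{\sigma}(\bx_k+\bp_k)\succeq\nabla^2 f_{\sigma}(\bx_k)-\xi r_k\bI$. This holds because $\|\bA-\bB\|\le t$ implies $\bA\succeq\bB-t\bI$ for symmetric matrices. Combining the two inequalities yields $\lambda_{\min}\ge-\delta_k-3\xi r_k/2$.

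\textbf{Remaining checks.} The only point needing care is the Taylor remainder estimate for the gradient. It requires $\nabla^2 f_{\sigma}$ to be continuous, so that the fundamental theorem of calculus applies to $t\mapsto\nabla f_{\sigma}(\bx_k+t\bp_k)$. This holds because $\nabla^2 f_{\sigma}$ is Lipschitz by Theorem~\ref{thm:smoothing}. Everything else is triangle inequalities. We also need $\bQ_k$ to be symmetric for the Löwner-order step. It is, since both $\widehat{\bH}$ and $\nabla^2 f_{\sigma}(\bx_k)$ are symmetric.
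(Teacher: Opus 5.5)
Your proposal is correct and follows the paper's proof essentially step for step. For the gradient bound you use the same split into the integral Taylor remainder (bounded by $\xi r_k^2/2$) plus $-\frac{\xi r_k}{2}\bp_k-\bq_k-\bQ_k\bp_k$ from the subproblem's stationarity condition, and your L\"owner-order perturbation step for the Hessian bound is just the form of Weyl's inequality that the paper invokes twice.
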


\begin{proof}
    On the one hand,
    by Taylor's Theorem (see, e.g.,~\cite[Theorem~2.1]{nocedal2006numerical}), we have
    $$
    \begin{aligned}
        \nabla f_{\sigma}(\bx_k+\bp_k)&=\nabla f_{\sigma}(\bx_k)+\int_0^1 \nabla^2 f_{\sigma}(\bx_k+t \bp_k) \bp_k \dd t\\
        &=\widehat{\bg}_{b_g}(\bx_k)-\bq_k+\bigl(\widehat{\bH}_{b_H}(\bx_k)-\bQ_k\bigr)\bp_k +\int_0^1 \bigl( \nabla^2 f_{\sigma}(\bx_k+t \bp_k)   -\nabla^2 f_{\sigma}(\bx_k)\bigr) \bp_k \dd t\\
        &=-\frac{\xi r_k}{2}\bp_k-\bq_k-\bQ_k\bp_k +\int_0^1 \bigl( \nabla^2 f_{\sigma}(\bx_k+t \bp_k)   -\nabla^2 f_{\sigma}(\bx_k)\bigr) \bp_k \dd t,
    \end{aligned}
    $$
    and thus
    $$
    \begin{aligned}
        \|\nabla f_{\sigma}(\bx_k+\bp_k)\|&\le
        \frac{\xi}{2} r_k^2+\alpha_k+\delta_k r_k +\left\|\int_0^1 \bigl( \nabla^2 f_{\sigma}(\bx_k+t \bp_k)   -\nabla^2 f_{\sigma}(\bx_k)\bigr) \bp_k \dd t\right\|\\
        &\le
        \frac{\xi}{2} r_k^2+\alpha_k+\delta_k r_k + r_k\int_0^1 \bigl\| \nabla^2 f_{\sigma}(\bx_k+t \bp_k)   -\nabla^2 f_{\sigma}(\bx_k)\bigr\|  \dd t\\
        &\le
        \frac{\xi}{2} r_k^2+\alpha_k+\delta_k r_k + \xi r_k^2\int_0^1 t  \dd t=
        \alpha_k+\delta_k r_k+\xi r_k^2. 
    \end{aligned}
    $$
    On the other hand, as $\|\nabla^2 f_{\sigma}(\bx_k+\bp_k)-\nabla^2 f_{\sigma}(\bx_k)\|\le \xi r_k$, Weyl's inequality implies that
    $$
        \lambda_{\min}\bigl(\nabla^2 f_{\sigma}(\bx_k+\bp_k)\bigr)\ge \lambda_{\min}\bigl(\nabla^2 f_{\sigma}(\bx_k)\bigr)- \xi r_k;
    $$
    see, e.g., the second exercise after~\cite[Corollary~6.3.4]{horn2012matrix}. However, it also follows from Lemma~\ref{lma:subproblem-optimality} that $\widehat{\bH}_{b_H}(\bx_k)\succeq -\xi r_k\bI/2$, and thus by Weyl's inequality again,
    $$
        \lambda_{\min}\bigl(\nabla^2 f_{\sigma}(\bx_k)\bigr)\ge \lambda_{\min}\bigl(\widehat{\bH}_{b_H}(\bx_k)\bigr)-\delta_k\ge
         -\frac{\xi}{2} r_k-\delta_k.
    $$
    By putting the above two pieces together, we have
    $$
        \lambda_{\min}\bigl(\nabla^2 f_{\sigma}(\bx_k+\bp_k)\bigr)\ge -\frac{\xi}{2} r_k-\delta_k - \xi r_k = -\delta_k -\frac{3\xi}{2} r_k,
    $$
    as desired.
\end{proof}






We are now at a position to 
show the soundness of
Algorithm~\ref{alg:random-smooth}, 
conditional on the event that the estimators are sufficiently accurate.

\begin{proposition}
    Consider the iterates $\bx_0,\ldots,\bx_{K_{\star}}$ generated by Algorithm~\ref{alg:random-smooth}.
    Conditional on
    \begin{equation}\label{eq:good-event}
        \mathcal{E}:=\bigcap_{k=0,\ldots,K_{\star}-1}\left\{\|\widehat{\bg}_{b_g}(\bx_k)-\nabla f_{\sigma}(\bx_k)\|\le\frac{\xi r_{\star}^2}{48}~\text{and}~\|\widehat{\bH}_{b_H}(\bx_k)-\nabla^2 f_{\sigma}(\bx_k)\|\le\frac{\xi r_{\star}}{24}\right\},
    \end{equation}
    there must exist some $k_{\star}=0,\ldots,K_{\star}-1$ such that
    $$
        \|\nabla f_{\sigma}(\bx_{k_{\star}+1})\|\le\varepsilon_g
        \quad\text{and}\quad
        \lambda_{\min}\bigl(\nabla^2 f_{\sigma}(\bx_{k_{\star}+1})\bigr)\ge -\varepsilon_H.
    $$
\end{proposition}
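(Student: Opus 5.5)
We argue by contradiction, combining the per-step decrease in Lemma~\ref{lma:real-decrease} with the stationarity bounds in Lemma~\ref{lma:bounds-xk+1}. Work on $\mathcal{E}$, so that $\alpha_k\le\xi r_\star^2/48$ and $\delta_k\le\xi r_\star/24$ for every $k=0,\ldots,K_\star-1$. The dichotomy is as follows. If some step is short, namely $r_k\le r_\star$, then $\bx_{k+1}$ is already approximately second-order stationary for $f_\sigma$. Otherwise every step is long, namely $r_k>r_\star$ for all $k$, and then each step decreases $f_\sigma$ by at least a constant times $\xi r_\star^3$. That is impossible for $K_\star$ steps, by the choice of $K_\star$ and the definition of $\Delta$.

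\emph{Short step.} Suppose $r_k\le r_\star$ for some $k$. Lemma~\ref{lma:bounds-xk+1} gives
\[
\|\nabla f_\sigma(\bx_{k+1})\|\le \frac{\xi r_\star^2}{48}+\frac{\xi r_\star^2}{24}+\xi r_\star^2=\frac{51}{48}\,\xi r_\star^2=\frac{17}{16}\,\xi r_\star^2\le\varepsilon_g,
\]
where the last inequality uses $r_\star\le\sqrt{16\varepsilon_g/(17\xi)}$. Similarly,
\[
\lambda_{\min}\bigl(\nabla^2 f_\sigma(\bx_{k+1})\bigr)\ge -\frac{\xi r_\star}{24}-\frac{3\xi r_\star}{2}=-\frac{37}{24}\,\xi r_\star\ge-\varepsilon_H,
\]
using $r_\star\le 24\varepsilon_H/(37\xi)$. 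The constants in $r_\star$ were evidently chosen for exactly this computation, so we take $k_\star=k$.

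\emph{Long steps.} Suppose $r_k>r_\star$ for all $k$. Then $\alpha_k\le \xi r_\star^2/48< \xi r_k^2/48$ and $\delta_k<\xi r_k/24$. Lemma~\ref{lma:real-decrease} therefore gives
\[
f_\sigma(\bx_{k+1})\le f_\sigma(\bx_k)-\xi r_k^3\Bigl(\frac{1}{12}-\frac{1}{48}-\frac{1}{48}\Bigr)=f_\sigma(\bx_k)-\frac{\xi r_k^3}{24}<f_\sigma(\bx_k)-\frac{\xi r_\star^3}{24}.
\]
Summing over $k=0,\ldots,K_\star-1$ yields
\[
\Delta\ge f_\sigma(\bx_0)-f_\sigma(\bx_{K_\star})>\frac{K_\star\,\xi r_\star^3}{24}\ge\Delta,
\]
since $K_\star\ge 24\Delta/(\xi r_\star^3)$. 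This is a contradiction; it requires $\Delta<\infty$, i.e., $f_\sigma$ bounded below, which is implicitly assumed in Algorithm~\ref{alg:random-smooth}.

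The only step needing care is the long-step case. There the error terms must be absorbed using $r_\star<r_k$ separately for the linear term ($\alpha_k r_k\le \xi r_k^3/48$) and the quadratic term ($\delta_k r_k^2/2\le \xi r_k^3/48$). Once that is done, everything else is arithmetic with the constants fixed in Algorithm~\ref{alg:random-smooth}.
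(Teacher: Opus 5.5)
Your proposal is correct and follows the paper's proof essentially line for line. It uses the same absorption of $\alpha_k r_k$ and $\delta_k r_k^2/2$ into $\xi r_k^3/48$ each when $r_k>r_\star$, the same telescoping contradiction against $K_\star\ge 24\Delta/(\xi r_\star^3)$, and the same arithmetic from Lemma~\ref{lma:bounds-xk+1} at a short step, $\tfrac{17}{16}\xi r_\star^2\le\varepsilon_g$ and $-\tfrac{37}{24}\xi r_\star\ge-\varepsilon_H$; your remark that $\Delta<\infty$ is required is a fair point that the paper leaves implicit in its definition of $\Delta$.
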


\begin{proof}
    If $r_k>r_{\star}$ for some $k=0,\ldots,K_{\star}-1$, then Lemma~\ref{lma:real-decrease} implies that
    $$
    \begin{aligned}
        f_{\sigma}(\bx_{k+1}) &\le f_{\sigma}(\bx_k) -\frac{\xi}{12} r_k^3 + \alpha_k r_k + \frac{\delta_k}{2} r_k^2\\
        &\le f_{\sigma}(\bx_k) -\frac{\xi}{12} r_k^3 + \frac{\xi r_{\star}^2}{48} r_k + \frac{\xi r_{\star}}{48} r_k^2
        < f_{\sigma}(\bx_k) - \frac{\xi}{24} r_k^3 < f_{\sigma}(\bx_k) - \frac{\xi r_{\star}^3}{24}; 
    \end{aligned}
    $$
    i.e., the value of $f_{\sigma}$ can be decreased by at least ${\xi r_{\star}^3}/{24}$.
    Thus, there must exist some $k_{\star}=0,\ldots,K_{\star}-1$ such that $r_{k_{\star}}\le r_{\star}$, as otherwise we would have
    $$
        \Delta\ge f_{\sigma}(\bx_0)-f_{\sigma}(\bx_{K_{\star}})=\sum_{k=0,\ldots,K_{\star}-1}\bigl(f_{\sigma}(\bx_k)-f_{\sigma}(\bx_{k+1})\bigr)> K_{\star} \frac{\xi r_{\star}^3}{24} \ge \frac{24\Delta}{\xi r_{\star}^3}\cdot \frac{\xi r_{\star}^3}{24} = \Delta, 
    $$
    a contradiction. It then immediately follows from Lemma~\ref{lma:bounds-xk+1} that
    $$
        \|\nabla f_{\sigma}(\bx_{k_{\star}+1})\|\le \alpha_{k_{\star}}+\delta_{k_{\star}} r_{k_{\star}}+\xi r_{k_{\star}}^2\le \frac{\xi r_{\star}^2}{48}+\frac{\xi r_{\star}}{24}r_{k_{\star}}+\xi r_{k_{\star}}^2\le 
        \frac{17\xi r_{\star}^2}{16}\le \varepsilon_g,
    $$
    $$
        \lambda_{\min}\bigl(\nabla^2 f_{\sigma}(\bx_{k_{\star}+1})\bigr)\ge -\delta_{k_{\star}} -\frac{3\xi}{2} r_{k_{\star}}\ge 
        -\frac{37\xi r_{\star}}{24}\ge -\varepsilon_H,
    $$
    as desired.
\end{proof}

To show Theorem~\ref{thm:soundedness},
it only remains to show that $\mathcal{E}$ (\ref{eq:good-event}) happens with high probability. 

\begin{proposition}
    We have $\mathbb{P}(\mathcal{E})\ge 1-\eta$.
\end{proposition}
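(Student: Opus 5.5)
We will bound $\mathbb{P}(\mathcal{E}^{\complement})$ by a union bound over the $2K_{\star}$ elementary failure events: for each $k=0,\ldots,K_{\star}-1$, the gradient event $\{\alpha_k>\xi r_{\star}^2/48\}$ and the Hessian event $\{\delta_k>\xi r_{\star}/24\}$. It suffices to show that each has probability at most $\eta/(2K_{\star})$, since summing then gives $\mathbb{P}(\mathcal{E}^{\complement})\le\eta$.

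The only subtlety is that $\bx_k$ is random, as it depends on the samples drawn in earlier iterations. We therefore condition on the $\sigma$-algebra $\mathcal{F}_k$ generated by all samples used in iterations $0,\ldots,k-1$. Given $\mathcal{F}_k$, the point $\bx_k$ is fixed, and the fresh samples $\bu_1,\ldots,\bu_{b_g}$ and $\bs_1,\ldots,\bs_{b_H}$ drawn at iteration $k$ are independent of $\mathcal{F}_k$. Hence Propositions~\ref{prop:gradient-estimator} and~\ref{prop:Hessian-estimator} apply conditionally with $\bx=\bx_k$. Their bounds are uniform in $\bx$, so taking expectations yields the same bounds unconditionally.

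For the gradient, we apply Proposition~\ref{prop:gradient-estimator} with $t=\xi r_{\star}^2/48$. The exponent is $b_g t^2/(2L^2\sigma^2+4L\sigma t/3)$. Substituting $t$, we have $2L^2\sigma^2/t^2=2\cdot 48^2L^2\sigma^2/(\xi^2r_{\star}^4)=4608L^2\sigma^2/(\xi^2r_{\star}^4)$ and $(4L\sigma/3)/t=64L\sigma/(\xi r_{\star}^2)$. The definition of $b_g$ therefore makes the exponent at least $\log(4K_{\star}/\eta)$, so the failure probability is at most $2\cdot\eta/(4K_{\star})=\eta/(2K_{\star})$.

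For the Hessian, we apply Proposition~\ref{prop:Hessian-estimator} with $t=\xi r_{\star}/24$. The exponent is $b_H t^2/\bigl(2(n+1)^2L^2+2(n+1)Lt/3\bigr)$. Here $2(n+1)^2L^2/t^2=1152(n+1)^2L^2/(\xi^2r_{\star}^2)$ and $(2(n+1)L/3)/t=16(n+1)L/(\xi r_{\star})$, which match the two terms in $b_H$. The exponent is thus at least $\log(4nK_{\star}/\eta)$, and the failure probability is at most $2n\cdot\eta/(4nK_{\star})=\eta/(2K_{\star})$.

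The main obstacle is this constant bookkeeping, namely checking that $b_g$ and $b_H$ exactly dominate the Bernstein denominators. It is routine once the substitutions above are made. The conditioning argument is standard, but it should be stated explicitly so that the fixed-$\bx$ concentration results apply to the adaptively chosen iterates.
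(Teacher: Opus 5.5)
Your proposal is correct and follows essentially the same route as the paper. The paper also conditions on the history up to iteration $k-1$, applies the two Bernstein-type bounds with $t=\xi r_{\star}^2/48$ and $t=\xi r_{\star}/24$ (your constant checks for $b_g$ and $b_H$ are exactly right), removes the conditioning via the tower property, and finishes with a union bound over the $2K_{\star}$ failure events.
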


\begin{proof}
    Let $\F_{k-1}$ be the filtration generated by all randomness up to iteration $k-1$ (inclusive). By Proposition~\ref{prop:gradient-estimator} and Proposition~\ref{prop:Hessian-estimator}, for all $k=0,\ldots,K_{\star}-1$, we have
    $$
    \begin{aligned}
        \mathbb{P}\mleft\{\|\widehat{\bg}_{b_g}(\bx_k)-\nabla f_{\sigma}(\bx_k)\|>\frac{\xi r_{\star}^2}{48}\middle|\F_{k-1}\mright\}
        &\le  2\exp\mleft(-\frac{b_g \xi^2 r_{\star}^4/2304}{2 L^2\sigma^2+ L\sigma \xi r_{\star}^2/36}\mright)
        \le   2\exp\mleft(\log\mleft(\frac{\eta}{4 K_{\star}}\mright)\mright)=\frac{\eta}{2 K_{\star}},
    \end{aligned}
    $$
    $$
    \begin{aligned}
        \mathbb{P}\mleft\{\|\widehat{\bH}_{b_H}(\bx_k)-\nabla^2 f_{\sigma}(\bx_k)\|>\frac{\xi r_{\star}}{24}\middle|\F_{k-1}\mright\}
        &\le 2 n\exp\mleft(-\frac{b_H \xi^2 r_{\star}^2/1152}{(n+1)^2 L^2+(n+1) L \xi r_{\star}/72}\mright)\\
        &\le 2 n\exp\mleft(\log\mleft(\frac{\eta}{4 n K_{\star}}\mright)\mright)=\frac{\eta}{2 K_{\star}},
    \end{aligned}
    $$
    and thus by the tower property of conditional probability, 
    $$
        \mathbb{P}\mleft\{\|\widehat{\bg}_{b_g}(\bx_k)-\nabla f_{\sigma}(\bx_k)\|>\frac{\xi r_{\star}^2}{48}
        \mright\}\le \frac{\eta}{2 K_{\star}}
        \quad
        \text{and}\quad
        \mathbb{P}\mleft\{\|\widehat{\bH}_{b_H}(\bx_k)-\nabla^2 f_{\sigma}(\bx_k)\|>\frac{\xi r_{\star}}{24}
        \mright\}\le \frac{\eta}{2 K_{\star}}.
    $$
    It then follows by applying a union bound that
    $$
        \mathbb{P}(\mathcal{E}^{\complement})
        \le\sum_{k=0}^{K_{\star}-1}\mathbb{P}\mleft\{\|\widehat{\bg}_{b_g}(\bx_k)-\nabla f_{\sigma}(\bx_k)\|>\frac{\xi r_{\star}^2}{48}\mright\}+\sum_{k=0}^{K_{\star}-1}\mathbb{P}\mleft\{\|\widehat{\bH}_{b_H}(\bx_k)-\nabla^2 f_{\sigma}(\bx_k)\|>\frac{\xi r_{\star}}{24}\mright\}
        \le
        \eta;
    $$
    i.e., $\mathbb{P}(\mathcal{E})\ge 1-\eta$, as desired.
\end{proof}


\section{Extensions and applications}\label{sec:extensions-applications}
\subsection{Weakly convex functions}
The goal of this subsection is to extend the framework developed earlier to weakly convex functions, 
which
arise widely in machine learning and beyond; e.g., sparse dictionary learning~\cite[Example~2.4]{davis2019stochastic} and robust low-rank matrix recovery~\cite{li2020nonconvex}. (For further examples, we refer interested readers to~\cite[Section~2.1]{davis2019stochastic},~\cite[Section~IV]{li2020understanding}, and the references therein.)

\subsubsection{Ill-posedness of the notion of Clarke SOSPs}
As weakly convex functions are a.e.\ twice differentiable by Alexandrov's theorem (see, e.g.,~\cite[Theorem~13.51]{rockafellar2009variational}), the Clarke generalized Hessian remains well defined. 
However, as it turns out, the construction no longer gives rise to optimality conditions for this extended class of functions.

\begin{example}
    Consider the function $f:\R\rightarrow\R$ given by $f(x):=|x|-x^2$, which is $2$-weakly convex. It is easy to see that $0$ is a (strict) local minimizer of $f$. However, we have
    $$
        \partial_{\dC}^2 f(0)=\operatorname{conv}\mleft(\limsup_{y\rightarrow 0,\,y\neq 0}\{f^{\prime\prime}(y)\}\mright)=\operatorname{conv}\mleft(\limsup_{y\rightarrow 0,\,y\neq 0}\{-2\}\mright)=\{-2\},
    $$
    which is negative definite.
\end{example}

In fact, without further assumptions, the Clarke generalized Hessian could even be empty.

\begin{example}
    Consider the function $f:\R\rightarrow\R$ given by $f(x):=|x|^{4/3}-x^2$, which is $2$-weakly convex. We have
    $$
        \partial_{\dC}^2 f(0)=\operatorname{conv}\mleft(\limsup_{y\rightarrow 0,\,y\neq 0}\{f^{\prime\prime}(y)\}\mright)=\operatorname{conv}\mleft(\limsup_{y\rightarrow 0,\,y\neq 0}\mleft\{\frac{4}{9}|y|^{-2/3}-2\mright\}\mright)=\emptyset.
    $$
\end{example}

\subsubsection{Stationarity/optimality coincidences with Moreau envelopes}
That said, it is well known that a weakly convex function is closely connected to its Moreau envelope, which is 
$L$-smooth;
see, e.g.,~\cite[Lemma~2.5]{davis2022proximal}.
In particular, their FOSPs and local/global minimizers all coincide; see, e.g.,~\cite[Lemma~2.5(4)]{davis2022proximal},~\cite[Theorem~3.5]{khanh2025local}, and~\cite[Corollary~3.4(d)]{hoheisel2020regularization}, respectively. 
In fact, perhaps surprisingly, it turns out that such a pair also enjoys a second-order stationarity coincidence in the sense of
Rockafellar SOSPs.

\begin{corollary}\label{cor:dd-envelope-equivalence}
    Let $\rho>0$ and $\lambda\in(0,1/\rho)$ be arbitrary, and $f:\R^n\rightarrow\R$ be 
    $\rho$-weakly convex. 
    For every $\bx\in\R^n$, the following two statements are equivalent.
    \begin{itemize}
        \item We have $\bd{0}\in\partial f(\bx)$ and $\dd^2 f(\bx;\bd{0})(\bw)\ge 0$ for all $\bw\in\R^n$.

        \item We have $\nabla f_{\lambda}(\bx)=\bd{0}$ and $\dd^2 f_{\lambda}(\bx;\bd{0})(\bw)\ge 0$ for all $\bw\in\R^n$.
    \end{itemize}
\end{corollary}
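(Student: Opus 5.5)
The plan is to split the equivalence into a first-order part and a second-order part. For the second-order part I will use the known formula expressing second subderivatives of a Moreau envelope as a Moreau envelope of the second subderivative of $f$.

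\emph{First-order part.} Because $f$ is $\rho$-weakly convex and $\lambda<1/\rho$, the function $\by\mapsto f(\by)+\frac{1}{2\lambda}\|\by-\bx\|^2$ is strongly convex. Hence $\operatorname{Prox}_{\lambda f}(\bx)$ is a singleton, $f_\lambda$ is $L$-smooth, and $\nabla f_\lambda(\bx)=\lambda^{-1}\bigl(\bx-\operatorname{Prox}_{\lambda f}(\bx)\bigr)$ (e.g.,~\cite[Lemma~2.5]{davis2022proximal}). Therefore $\nabla f_\lambda(\bx)=\bd{0}$ if and only if $\bx=\operatorname{Prox}_{\lambda f}(\bx)$. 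By the optimality condition of the strongly convex prox problem, together with the coincidence of subdifferentials for weakly convex functions, this holds if and only if $\bd{0}\in\partial f(\bx)$. So in both statements the first-order conditions agree, and from now on I assume $\bd{0}\in\partial f(\bx)$ and $\operatorname{Prox}_{\lambda f}(\bx)=\bx$.

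\emph{Second-order part.} Weakly convex functions are prox-regular and subdifferentially continuous everywhere~\cite[Proposition~13.34]{rockafellar2009variational}. I will therefore invoke the envelope formula for second subderivatives~\cite[Exercise~13.45]{rockafellar2009variational}. With $\bar{\bv}=\bd{0}$ (so that $\bx+\lambda\bar{\bv}=\bx$), it reads
$$
\tfrac12\,\dd^2 f_\lambda(\bx;\bd{0})(\bw)=\min_{\by\in\R^n}\Bigl(h(\by)+\tfrac{1}{2\lambda}\|\by-\bw\|^2\Bigr)=:h_\lambda(\bw),
\quad\text{where } h:=\tfrac12\,\dd^2 f(\bx;\bd{0}).
$$
Here $h$ is lower semicontinuous, positively homogeneous of degree $2$, satisfies $h(\bd{0})=0$, and satisfies $h\ge-\frac{\rho}{2}\|\bullet\|^2$ by weak convexity. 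This last bound makes $h_\lambda$ finite for $\lambda<1/\rho$. The equivalence then follows in two steps.
\begin{itemize}
\item If $h\ge0$, then $h_\lambda\ge0$ trivially.
\item Conversely, suppose $h(\by)<0$ for some $\by$. Take $\bw=\by$ and the candidate $\by$ itself in the minimization: this gives $h_\lambda(\by)\le h(\by)<0$, so $\dd^2 f_\lambda(\bx;\bd{0})(\by)<0$.
\end{itemize}
So the degree-$2$ homogeneity is not even needed for this step.

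\emph{Main obstacle.} The hard step is justifying the envelope formula under exactly the present hypotheses. The cited result is local: it requires $\lambda$ below a prox-regularity threshold and is stated in terms of $f$-attentive localizations. I must check that global $\rho$-weak convexity allows every $\lambda\in(0,1/\rho)$. If the citation does not cover this cleanly, I will prove directly the only two inequalities needed, using $\operatorname{Prox}_{\lambda f}(\bx)=\bx$.
\begin{itemize}
\item For the direction ``$h\ge0\Rightarrow$ envelope condition'', I need a lower bound on $f_\lambda(\bx+t\bw')-f_\lambda(\bx)$. Write $f_\lambda(\bx+t\bw')=f(\bx+t\by_t)+\frac{t^2}{2\lambda}\|\by_t-\bw'\|^2$ with $\by_t$ the rescaled prox displacement. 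Since the prox map is Lipschitz and fixes $\bx$, the vectors $\by_t$ stay bounded, so they have cluster points, and lower semicontinuity of the liminf defining $h$ yields $\liminf\ge 2h_\lambda(\bw)\ge0$.
\item For the converse direction, I need an upper bound. Plug $\bx+t\by'$, with $\by'$ along a sequence realizing $h(\by)$, into the definition of $f_\lambda(\bx+t\by)$. This gives $\dd^2 f_\lambda(\bx;\bd{0})(\by)\le 2h(\by)<0$.
\end{itemize}
In the second bullet, the liminf over $\bw'\to\by$ in the definition of $\dd^2 f_\lambda(\bx;\bd{0})(\by)$ is handled by taking $\bw'=\by$ exactly.
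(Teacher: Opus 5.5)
Your proposal is correct and follows essentially the paper's route: the paper first proves the identity $\frac{1}{2}\dd^2 f_{\lambda}(\bx;\bd{0})=\bigl(\frac{1}{2}\dd^2 f(\bx;\bd{0})\bigr)_{\lambda}$ at FOSPs for every $\lambda\in(0,1/\rho)$, and then concludes with the same two observations you use, $h\ge 0\Rightarrow h_{\lambda}\ge 0$ and $h_{\lambda}\le h$. The paper proves this identity directly because Exercise~13.45 only covers sufficiently small $\lambda$, so your fallback argument is the one actually needed. The one minor difference is how boundedness is obtained for the lower bound: you use Lipschitz continuity of the exact proximal map fixing $\bx$, whereas the paper uses $1/k$-near-minimizers bounded via the quadratic minorant $f(\bx+t\by)\ge f(\bx)-\frac{\rho}{2}t^2\|\by\|^2$ (valid since $\bd{0}\in\partial f(\bx)$).
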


We formulate Corollary~\ref{cor:dd-envelope-equivalence} as a corollary as it follows immediately from the following proposition, which identifies the second subderivatives of Moreau envelopes with the Moreau envelopes of second subderivatives at FOSPs of weakly convex functions, offering insights complementary to, e.g.,~\cite[Exercise~13.45]{rockafellar2009variational} (with ``for some $\lambda > 0$ sufficiently small'' sharpened) and~\cite[Proposition~4.1]{hang2022augmented} (from convexity to weak convexity, and without twice epi-differentiability).
As an aside, this result also suggests new approaches to analyzing the landscape of nonsmooth functions.

\begin{proposition}\label{prop:dd-of-envelope}
    Let $\rho>0$ and $\lambda\in(0,1/\rho)$ be arbitrary, and $f:\R^n\rightarrow\R$ be 
    $\rho$-weakly convex. 
    If $\bx\in\R^n$ is 
    an FOSP
    of $f$ (i.e., $\bd{0}\in\partial f(\bx)$),
    then
    $$
        \frac{1}{2}\dd^2 f_{\lambda}(\bx;\bd{0})(\bw)=\left(\frac{1}{2}\dd^2 f(\bx;\bd{0})\right)_{\lambda}(\bw)
        \quad\text{for all $\bw\in\R^n$}.
    $$
\end{proposition}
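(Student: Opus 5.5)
The plan is to reduce the statement to an exact identity at the level of second-order difference quotients, and then pass to the limit $t\searrow 0$ with an epi-convergence argument.

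\textbf{Step 1 (base point).} Since $\bd{0}\in\partial f(\bx)$ and $f+\frac{\rho}{2}\|\bullet\|^2$ is convex, $f$ satisfies the quadratic minorization
$$
f(\by)\ge f(\bx)-\frac{\rho}{2}\|\by-\bx\|^2\quad\text{for all }\by .
$$
Because $\lambda<1/\rho$, the function $\by\mapsto f(\by)+\frac{1}{2\lambda}\|\by-\bx\|^2$ is strongly convex and minimized at $\by=\bx$. Hence $\operatorname{Prox}_{\lambda f}(\bx)=\bx$, $f_\lambda(\bx)=f(\bx)$, and $\nabla f_\lambda(\bx)=\bd{0}$. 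So both sides of the claim are second subderivatives for the zero vector.

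\textbf{Step 2 (exact identity for quotients).} For $t>0$ set
$$
h_t(\bz):=\frac{f(\bx+t\bz)-f(\bx)}{t^2},\qquad k_t(\bw):=\frac{f_\lambda(\bx+t\bw)-f_\lambda(\bx)}{t^2}.
$$
These are one half of the second-order difference quotients. In the definition of $f_\lambda(\bx+t\bw)$, substitute $\by=\bx+t\bz$ and use $f_\lambda(\bx)=f(\bx)$. The factor $t^2$ then cancels from the quadratic term, giving the exact identity
$$
k_t(\bw)=\min_{\bz}\Bigl(h_t(\bz)+\tfrac{1}{2\lambda}\|\bz-\bw\|^2\Bigr)=(h_t)_\lambda(\bw).
$$
By definition, $\frac12\dd^2 f(\bx;\bd{0})=:h$ is the lower epi-limit of $h_t$, i.e.
$$
h(\bz)=\liminf_{t\searrow0,\ \bz'\to\bz}h_t(\bz').
$$
Likewise $\frac12\dd^2 f_\lambda(\bx;\bd{0})(\bw)=\liminf_{t\searrow0,\ \bw'\to\bw}k_t(\bw')$. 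It therefore suffices to show
$$
\liminf_{t\searrow0,\ \bw'\to\bw}(h_t)_\lambda(\bw')=h_\lambda(\bw).
$$

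\textbf{Step 3 (upper bound).} Fix any $\bz$ and choose $t_k\searrow0$ and $\bz_k\to\bz$ with $h_{t_k}(\bz_k)\to h(\bz)$. Then
$$
(h_{t_k})_\lambda(\bw)\le h_{t_k}(\bz_k)+\tfrac{1}{2\lambda}\|\bz_k-\bw\|^2\to h(\bz)+\tfrac{1}{2\lambda}\|\bz-\bw\|^2 .
$$
Taking $\bw'=\bw$ in the liminf and then the infimum over $\bz$ gives $\le h_\lambda(\bw)$. If $h(\bz)=+\infty$ there is nothing to prove for that $\bz$.

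\textbf{Step 4 (lower bound, the main obstacle).} Take $t_k\searrow0$ and $\bw_k\to\bw$ realizing the liminf, and let $\bz_k$ be minimizers defining $(h_{t_k})_\lambda(\bw_k)$; they exist by Step 1's strong convexity after rescaling.

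The minorization from Step 1 gives the uniform bound $h_t(\bz)\ge-\frac{\rho}{2}\|\bz\|^2$. Since $\frac{1}{2\lambda}>\frac{\rho}{2}$, the objective $\bz\mapsto h_{t_k}(\bz)+\frac{1}{2\lambda}\|\bz-\bw_k\|^2$ is coercive uniformly in $k$. Comparing with the value at $\bz=\bd{0}$, which is $\frac{1}{2\lambda}\|\bw_k\|^2$, shows that $(\bz_k)$ is bounded. Passing to a subsequence, $\bz_k\to\bar\bz$. The definition of the lower epi-limit then yields $\liminf_k h_{t_k}(\bz_k)\ge h(\bar\bz)$, and hence
$$
\liminf_k (h_{t_k})_\lambda(\bw_k)\ge h(\bar\bz)+\tfrac{1}{2\lambda}\|\bar\bz-\bw\|^2\ge h_\lambda(\bw).
$$
The same bound $h\ge-\frac{\rho}{2}\|\bullet\|^2$ shows that $h_\lambda$ is finite, so no $\infty-\infty$ issues arise.

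The delicate points are this uniform prox-boundedness, which is where $\lambda<1/\rho$ is essential, and handling the perturbation $\bw'\to\bw$ jointly with $t$. Minimizing over $\bz_k$ and using the uniform coercivity handles the latter at once.

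Combining Steps 3 and 4 proves the displayed identity in Step 2, and hence the proposition.
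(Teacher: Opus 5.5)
Your proposal is correct and follows essentially the same route as the paper. It uses the same three ingredients:
\begin{itemize}
\item the rescaling identity that writes the difference quotient of $f_{\lambda}$ as the Moreau envelope of the difference quotient of $f$;
\item an upper bound obtained by plugging in a sequence that realizes $\dd^2 f(\bx;\bd{0})(\bz)$;
\item a lower bound obtained by extracting a convergent subsequence of bounded (near-)minimizers, where boundedness comes from the minorization $f(\by)\ge f(\bx)-\frac{\rho}{2}\|\by-\bx\|^2$ together with $\lambda<1/\rho$.
\end{itemize}
The differences are only cosmetic: you use exact minimizers, which exist by strong convexity, instead of $1/k$-minimizers, and you get boundedness directly by comparing with the value at $\bz=\bd{0}$ rather than by a contradiction argument.
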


\begin{proof}
    Upon relocating 
    the graph of $f$
    to the origin, we may assume without loss of generality that $\bx=\bd{0}$ and $f(\bx)=0$.
    We begin with the ``$\le$'' direction. 
    As $\bd{0}\in\partial f(\bx)$, it follows from~\cite[Lemma~2.5(4)]{davis2022proximal} that $\operatorname{Prox}_{\lambda f}(\bx)=\bx$, and thus
    $$
        f_{\lambda}(\bx)=f\bigl(\operatorname{Prox}_{\lambda f}(\bx)\bigr)+\frac{1}{2\lambda}\bigl\|\operatorname{Prox}_{\lambda f}(\bx)-\bx\bigr\|^2=f(\bx)+\frac{1}{2\lambda}\|\bx-\bx\|^2=f(\bx)=0.
    $$
    As a result, for all $t>0$ and $\bw^{\prime}\in\R^n$, we have
    \begin{equation}\label{eq:Delta-as-envelope}
    \begin{aligned}
        \frac{f_{\lambda}(\bx+t \bw^{\prime})-f_{\lambda}(\bx)-t \bd{0}^{\T} \bw^{\prime}}{t^2/2}&=
        \frac{f_{\lambda}(t \bw^{\prime})}{t^2/2}\\
        &=
        \frac{2}{t^2}\min\mleft\{f(\by)+\frac{1}{2\lambda}\|\by-t \bw^{\prime}\|^2:\by\in\R^n\mright\}\\
        &=
        \frac{2}{t^2}\min\mleft\{f(t \by)+\frac{t^2}{2\lambda}\|\by-\bw^{\prime}\|^2:\by\in\R^n\mright\}
        \\
        &=
        \min\mleft\{\frac{f(t \by)}{t^2/2}+\frac{1}{\lambda}\|\by-\bw^{\prime}\|^2:\by\in\R^n\mright\}.
    \end{aligned}
    \end{equation}
    Now, for all $\bu\in\R^n$, 
    as
    $$
        \dd^2 f(\bx;\bd{0})(\bu)=\liminf_{t\searrow 0,\,\bu^{\prime}\rightarrow\bu}\frac{f(\bx+t \bu^{\prime})-f(\bx)-t \bd{0}^{\T} \bu^{\prime}}{t^2/2},
    $$
    it follows from~\cite[Lemma~1.7]{rockafellar2009variational} that there must exist a sequence $(t_1,\bu_1),(t_2,\bu_2),\ldots$ with $t_k\searrow 0$ and $\bu_k\rightarrow\bu$ as $k\rightarrow\infty$ such that
    $$
        \dd^2 f(\bx;\bd{0})(\bu)=\lim_{k\rightarrow\infty}\frac{f(\bx+t_k \bu_k)-f(\bx)-t_k \bd{0}^{\T} \bu_k}{t_k^2/2}=
        \lim_{k\rightarrow\infty}\frac{f(t_k \bu_k)}{t_k^2/2}.
    $$
    On the other hand, for all $\bw\in\R^n$, by letting $t=t_k$ and $\bw^{\prime}=\bw$ in (\ref{eq:Delta-as-envelope}) and taking $\bu_k$ as a feasible solution to (\ref{eq:Delta-as-envelope}), 
    it also follows that
    $$
    \begin{aligned}
        \frac{f_{\lambda}(\bx+t_k \bw)-f_{\lambda}(\bx)-t_k \bd{0}^{\T} \bw}{t_k^2/2}&=\min\mleft\{\frac{f(t_k \by)}{t_k^2/2}+\frac{1}{\lambda}\|\by-\bw\|^2:\by\in\R^n\mright\}\le
        \frac{f(t_k \bu_k)}{t_k^2/2}+\frac{1}{\lambda}\|\bu_k-\bw\|^2.
    \end{aligned}
    $$
    By putting the above two pieces together, we have
    $$
    \begin{aligned}
        \liminf_{t\searrow 0,\,\bw^{\prime}\rightarrow\bw}\frac{f_{\lambda}(\bx+t \bw^{\prime})-f_{\lambda}(\bx)-t \bd{0}^{\T} \bw^{\prime}}{t^2/2}&\le
        \liminf_{k\rightarrow\infty}\frac{f_{\lambda}(\bx+t_k \bw)-f_{\lambda}(\bx)-t_k \bd{0}^{\T} \bw}{t_k^2/2}\\
        &\le\liminf_{k\rightarrow\infty}\left(\frac{f(t_k \bu_k)}{t_k^2/2}+\frac{1}{\lambda}\|\bu_k-\bw\|^2\right)\\
        &=\lim_{k\rightarrow\infty}\frac{f(t_k \bu_k)}{t_k^2/2}+\lim_{k\rightarrow\infty}\frac{1}{\lambda}\|\bu_k-\bw\|^2\\
        &=\dd^2 f(\bx;\bd{0})(\bu)+\frac{1}{\lambda}\|\bu-\bw\|^2;
    \end{aligned}
    $$
    i.e.,
    $$
        \frac{1}{2}\dd^2 f_{\lambda}(\bx;\bd{0})(\bw)\le\frac{1}{2}\dd^2 f(\bx;\bd{0})(\bu)+\frac{1}{2\lambda}\|\bu-\bw\|^2
        \quad\text{for all $\bu,\bw\in\R^n$},
    $$
    or in other words, for all $\bw\in\R^n$,
    $$
        \frac{1}{2}\dd^2 f_{\lambda}(\bx;\bd{0})(\bw)\le\min\mleft\{\frac{1}{2}\dd^2 f(\bx;\bd{0})(\bu)+\frac{1}{2\lambda}\|\bu-\bw\|^2:\bu\in\R^n\mright\}=\left(\frac{1}{2}\dd^2 f(\bx;\bd{0})\right)_{\lambda}(\bw),
    $$
    as desired.

    To see the other direction, we first observe through~\cite[Lemma~1.7]{rockafellar2009variational} again that for all $\bw\in\R^n$ there must exist a sequence $(t_1,\bw_1),(t_2,\bw_2),\ldots$ with $t_k\searrow 0$ and $\bw_k\rightarrow\bw$ as $k\rightarrow\infty$ such that
    $$
    \begin{aligned}
        \dd^2 f_{\lambda}(\bx;\bd{0})(\bw)&=\lim_{k\rightarrow\infty}\frac{f_{\lambda}(\bx+t_k \bw_k)-f_{\lambda}(\bx)-t_k \bd{0}^{\T} \bw_k}{t_k^2/2}\\
        &=\lim_{k\rightarrow\infty}
        \min\mleft\{\frac{f(t_k \by)}{t_k^2/2}+\frac{1}{\lambda}\|\by-\bw_k\|^2:\by\in\R^n\mright\},
    \end{aligned}
    $$
    where the last equality follows from (\ref{eq:Delta-as-envelope}) with $t=t_k$ and $\bw^{\prime}=\bw_k$. Now, let $\by_1,\by_2,\ldots$ be such that
    $$
        \frac{f(t_k \by_k)}{t_k^2/2}+\frac{1}{\lambda}\|\by_k-\bw_k\|^2-\frac{1}{k}\le\min\mleft\{\frac{f(t_k \by)}{t_k^2/2}+\frac{1}{\lambda}\|\by-\bw_k\|^2:\by\in\R^n\mright\}
        \quad\text{for all $k=1,2,\ldots$},
    $$
    and thus
    $$
        \limsup_{k\rightarrow\infty}\left(\frac{f(t_k \by_k)}{t_k^2/2}+\frac{1}{\lambda}\|\by_k-\bw_k\|^2\right)\le
        \dd^2 f_{\lambda}(\bx;\bd{0})(\bw).
    $$
    We claim that $\limsup_{k\rightarrow\infty}\|\by_k\|<\infty$. To this end, we first observe that
    $$
    \begin{aligned}
        \dd^2 f_{\lambda}(\bx;\bd{0})(\bw)&=\lim_{k\rightarrow\infty}
        \min\mleft\{\frac{f(t_k \by)}{t_k^2/2}+\frac{1}{\lambda}\|\by-\bw_k\|^2:\by\in\R^n\mright\}
        \le\limsup_{k\rightarrow\infty}
        \frac{1}{\lambda}\|\bw_k\|^2<\infty,
    \end{aligned}
    $$
    as $\bw_1,\bw_2,\ldots$ is a convergent sequence. 
    Then, we estimate
    $$
    \begin{aligned}
        \frac{f(t_k \by_k)}{t_k^2/2}+\frac{1}{\lambda}\|\by_k-\bw_k\|^2
        \ge -\rho\|\by_k\|^2 +\frac{1}{\lambda}\|\by_k-\bw_k\|^2=
        \left(\frac{1}{\lambda}-\rho\right)\|\by_k\|^2-\frac{2}{\lambda}\by_k^{\T}\bw_k+\frac{1}{\lambda}\|\bw_k\|^2,
    \end{aligned}
    $$
    where the inequality follows from~\cite[Lemma~2.1(3)]{davis2019stochastic}. However, as $\bw_1,\bw_2,\ldots$ is convergent, it must be uniformly bounded; i.e., $\|\bw_k\|\le M$ for some $M>0$ and all $k=1,2,\ldots$. This implies that 
    $$
        \left(\frac{1}{\lambda}-\rho\right)\|\by_k\|^2-\frac{2}{\lambda}\by_k^{\T}\bw_k+\frac{1}{\lambda}\|\bw_k\|^2\ge
        \left(\frac{1}{\lambda}-\rho\right)\|\by_k\|^2-\frac{2 M}{\lambda}\|\by_k\|
        \rightarrow\infty\quad\text{as $\|\by_k\|\rightarrow\infty$},
    $$
    as ${1}/{\lambda}-\rho>0$ as assumed. As a result, if $\limsup_{k\rightarrow\infty}\|\by_k\|=\infty$, then
    $$
        \dd^2 f_{\lambda}(\bx;\bd{0})(\bw)\ge\limsup_{k\rightarrow\infty}\left(\frac{f(t_k \by_k)}{t_k^2/2}+\frac{1}{\lambda}\|\by_k-\bw_k\|^2\right)\ge
        \limsup_{k\rightarrow\infty}\left(\left(\frac{1}{\lambda}-\rho\right)\|\by_k\|^2-\frac{2 M}{\lambda}\|\by_k\|\right)=\infty,
    $$
    a contradiction. It then follows from the claim that $\by_1,\by_2,\ldots$ is uniformly bounded, and thus admits a subsequence $\by_{k_1},\by_{k_2},\ldots$ with $\lim_{\ell\rightarrow\infty}\by_{k_{\ell}}=\by_0$ for some $\by_0\in\R^n$ by the Bolzano--Weierstrass theorem, which further implies that
    $$
    \begin{aligned}
        \dd^2 f_{\lambda}(\bx;\bd{0})(\bw)&\ge\limsup_{k\rightarrow\infty}\left(\frac{f(t_k \by_k)}{t_k^2/2}+\frac{1}{\lambda}\|\by_k-\bw_k\|^2\right)\\
        &\ge\limsup_{\ell\rightarrow\infty}\left(\frac{f(t_{k_{\ell}} \by_{k_{\ell}})}{t_{k_{\ell}}^2/2}+\frac{1}{\lambda}\|\by_{k_{\ell}}-\bw_{k_{\ell}}\|^2\right)\\
        &\ge\liminf_{\ell\rightarrow\infty}\frac{f(t_{k_{\ell}} \by_{k_{\ell}})}{t_{k_{\ell}}^2/2}+\frac{1}{\lambda}\|\by_0-\bw\|^2\\
        &\ge\liminf_{t\searrow 0,\,\by^{\prime}\rightarrow\by_0}\frac{f(t \by^{\prime})}{t^2/2}+\frac{1}{\lambda}\|\by_0-\bw\|^2\\
        &=
        \dd^2 f(\bx;\bd{0})(\by_0)+\frac{1}{\lambda}\|\by_0-\bw\|^2\\
        &\ge \min\mleft\{\dd^2 f(\bx;\bd{0})(\bu)+\frac{1}{\lambda}\|\bu-\bw\|^2:\bu\in\R^n\mright\}\\
        &=2\left(\frac{1}{2}\dd^2 f(\bx;\bd{0})\right)_{\lambda}(\bw);
    \end{aligned}
    $$
    i.e., 
    $$
        \frac{1}{2}\dd^2 f_{\lambda}(\bx;\bd{0})(\bw)\ge \left(\frac{1}{2}\dd^2 f(\bx;\bd{0})\right)_{\lambda}(\bw)
        \quad\text{for all $\bw\in\R^n$},
    $$
    as desired.
\end{proof}



We are now ready to prove Corollary~\ref{cor:dd-envelope-equivalence}.

\begin{myproof}{Corollary~\ref{cor:dd-envelope-equivalence}}
    If $\bd{0}\in\partial f(\bx)$ and $\dd^2 f(\bx;\bd{0})(\bw)\ge 0$ for all $\bw\in\R^n$, then $\nabla f_{\lambda}(\bx)=\bd{0}$ by~\cite[Lemma~2.5(4)]{davis2022proximal} and 
    $$
        \frac{1}{2}\dd^2 f_{\lambda}(\bx;\bd{0})(\bw)=\left(\frac{1}{2}\dd^2 f(\bx;\bd{0})\right)_{\lambda}(\bw)=\min\mleft\{\frac{1}{2}\dd^2 f(\bx;\bd{0})(\bu)+\frac{1}{2\lambda}\|\bu-\bw\|^2:\bu\in\R^n\mright\}\ge 0.
    $$
    If $\nabla f_{\lambda}(\bx)=\bd{0}$ and $\dd^2 f_{\lambda}(\bx;\bd{0})(\bw)\ge 0$ for all $\bw\in\R^n$, then $\bd{0}\in\partial f(\bx)$ by~\cite[Lemma~2.5(4)]{davis2022proximal} and
    $$
        \frac{1}{2}\dd^2 f(\bx;\bd{0})(\bw)\ge\left(\frac{1}{2}\dd^2 f(\bx;\bd{0})\right)_{\lambda}(\bw)=\frac{1}{2}\dd^2 f_{\lambda}(\bx;\bd{0})(\bw)\ge 0,
    $$
    as desired.
\end{myproof}




While Corollary~\ref{cor:dd-envelope-equivalence} already reduces the problem of computing a Rockafellar SOSP of a potentially nonsmooth function to a simpler setting (i.e., $L$-smooth functions) without any loss, it is still natural to wonder whether the reduced problem can in fact be even easier; i.e., whether the class of Moreau envelopes generated by all weakly convex functions can be a proper subset of all $L$-smooth functions. The following realizability result of Moreau envelopes, which follows as an immediate consequence of the proximal-hull techniques in~\cite{rockafellar2009variational},
offers a refutation of such a possibility through a stronger statement:
Every $L$-smooth function can be generated as the Moreau envelope of another $L^{\prime}$-smooth function, rather than a weakly convex function.

\begin{proposition}
    Let $L>0$ and $0<\lambda<1/L$ be arbitrary, and $g:\R^n\rightarrow\R$ be $L$-smooth. We have 
    $$
        g=\bigl(-(-g)_{\lambda}\bigr)_{\lambda},
        \quad\text{where $-(-g)_{\lambda}$ is $\max\mleft\{\frac{1}{\lambda},\frac{L}{1-\lambda L}\mright\}$-smooth}.
    $$
\end{proposition}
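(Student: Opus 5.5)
We prove both claims with the proximal-hull machinery: the operation $h\mapsto -(-h)_\lambda$ is the proximal hull, i.e.\ the supremum of all quadratics $\by\mapsto \bz^{\T}\by$-type functions of the form $\by\mapsto c-\frac{1}{2\lambda}\|\by-\bz\|^2$ that lie below $h$. Set $h:=-(-g)_{\lambda}$. Since $g$ is $L$-smooth, $-g$ is $L$-weakly convex, and $\lambda<1/L$ gives that $-g+\frac{1}{2\lambda}\|\bullet-\bx\|^2$ is $(1/\lambda-L)$-strongly convex for each $\bx$. Hence $\operatorname{Prox}_{\lambda(-g)}$ is single-valued and $(-g)_\lambda$ is well-defined and differentiable.

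\emph{Step 1: the identity $g=h_\lambda$.} Write
$$
h(\by)=\max_{\bz}\Bigl(g(\bz)-\frac{1}{2\lambda}\|\bz-\by\|^2\Bigr),
$$
so that $h\ge g$ (take $\bz=\by$), and therefore $h_\lambda\ge g_\lambda$. For the exact identity, compute $h_\lambda(\bx)=\min_{\by}\max_{\bz}\bigl(g(\bz)-\frac{1}{2\lambda}\|\bz-\by\|^2+\frac{1}{2\lambda}\|\by-\bx\|^2\bigr)$.
\begin{itemize}
\item The choice $\bz=\bx$ gives $h_\lambda(\bx)\ge g(\bx)$.
\item For the upper bound, pick $\by$ so that the maximizer over $\bz$ is exactly $\bx$. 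The first-order condition in $\bz$ is $\nabla g(\bx)=(\bx-\by)/\lambda$, i.e.\ $\by=\bx-\lambda\nabla g(\bx)$. The inner problem is strongly concave because $\lambda<1/L$, so this stationary point is the global maximizer. The inner value is then $g(\bx)-\frac{\lambda}{2}\|\nabla g(\bx)\|^2$, and the outer quadratic adds $\frac{1}{2\lambda}\|\lambda\nabla g(\bx)\|^2=\frac{\lambda}{2}\|\nabla g(\bx)\|^2$, giving exactly $g(\bx)$.
\end{itemize}
Hence $h_\lambda=g$. This is the standard fact that $g$ is $\lambda$-proximally representable, as in \cite[Example~1.44, Exercise~1.45]{rockafellar2009variational}.

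\emph{Step 2: smoothness of $h$.} By the envelope argument, $\nabla h(\by)=(\bz(\by)-\by)/\lambda$, where $\bz(\by)$ is the unique maximizer, characterized by $\bz-\lambda\nabla g(\bz)=\by$. In other words, $\bz(\by)=(I-\lambda\nabla g)^{-1}(\by)$. The map $I-\lambda\nabla g$ is $(1-\lambda L)$-strongly monotone, so its inverse is $(1-\lambda L)^{-1}$-Lipschitz. Substituting gives $\nabla h(\by)=\nabla g(\bz(\by))$, which is $L/(1-\lambda L)$-Lipschitz.

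To get the stated constant $\max\{1/\lambda, L/(1-\lambda L)\}$ (presumably a sharper eigenvalue-type bound), argue at the level of second-order bounds. On one side, $h+\frac{1}{2\lambda}\|\bullet\|^2$ is a supremum of affine functions of $\by$, hence convex, so $\nabla^2 h\succeq -\frac{1}{\lambda}\bI$. On the other side, $h$ is the infimal convolution $-\bigl((-g)\,\square\,\frac{1}{2\lambda}\|\bullet\|^2\bigr)$, and a Hessian computation gives $\nabla^2 h=\nabla^2 g(\bI-\lambda\nabla^2 g)^{-1}$ with eigenvalues $\mu/(1-\lambda\mu)$ for $\mu\in[-L,L]$. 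These lie in $[-L/(1+\lambda L),\,L/(1-\lambda L)]$. Combining both sides bounds the spectrum by $\max\{1/\lambda, L/(1-\lambda L)\}$ in absolute value.

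To make the Hessian argument rigorous without twice differentiability, use the monotonicity formulation. Write $\nabla h=\nabla g\circ(I-\lambda\nabla g)^{-1}$ and bound $\langle\nabla h(\by)-\nabla h(\by'),\by-\by'\rangle$ from above and below via co-coercivity-type inequalities on $\nabla g$. Then invoke the fact that a function $h$ with both $h+\frac{a}{2}\|\bullet\|^2$ and $\frac{b}{2}\|\bullet\|^2-h$ convex is $\max\{a,b\}$-smooth \cite[Proposition~12.60]{rockafellar2009variational}.

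The main obstacle is proving these two-sided convexity statements cleanly from Lipschitz $\nabla g$ alone.
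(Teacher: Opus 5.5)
Your argument is correct and complete, but it takes a different, self-contained route from the paper.

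\textbf{Identity.} The paper observes that $-g+\frac{1}{2\lambda}\|\bullet\|^2$ is convex. It then invokes the $\lambda$-proximality criterion \cite[Example~11.26(d)]{rockafellar2009variational} to conclude that $-g$ equals its $\lambda$-proximal hull, and unwinds the hull formula of \cite[Example~1.44]{rockafellar2009variational}. You instead verify $h_\lambda=g$ directly: the lower bound comes from $\bz=\bx$, and the upper bound from the explicit point $\by=\bx-\lambda\nabla g(\bx)$. As a by-product, this identifies $\operatorname{Prox}_{\lambda h}(\bx)=\bx-\lambda\nabla g(\bx)$.

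\textbf{Smoothness.} The paper quotes \cite[Lemma~2.5(3)]{davis2022proximal} for the envelope of the $L$-weakly convex function $-g$. That lemma uses only a one-sided curvature bound, since it must also cover nonsmooth functions, and this is where the term $1/\lambda$ comes from. Your formula $\nabla h=\nabla g\circ(\mathrm{Id}-\lambda\nabla g)^{-1}$ uses the two-sided Lipschitz bound on $\nabla g$ and yields the constant $L/(1-\lambda L)$. This never exceeds $\max\{1/\lambda,L/(1-\lambda L)\}$ and is strictly smaller when $\lambda L<1/2$.

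The paper's proof is shorter, and its identity step needs no differentiability of $g$. Yours is elementary, gives explicit formulas for the proximal map and the gradient, and produces a sharper constant.

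You should, however, drop everything after ``To get the stated constant.'' You have the comparison backwards. Because $L/(1-\lambda L)\le\max\{1/\lambda,L/(1-\lambda L)\}$, an $L/(1-\lambda L)$-Lipschitz gradient is automatically $\max\{1/\lambda,L/(1-\lambda L)\}$-Lipschitz. So the statement is already proved at that point. The Hessian heuristic, the sketched monotonicity argument, and the ``main obstacle'' you end on are all unnecessary.

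Also, your opening sentence misdescribes $h\mapsto-(-h)_\lambda$. That map is the sup-convolution $\by\mapsto\max_{\bz}\bigl(h(\bz)-\frac{1}{2\lambda}\|\bz-\by\|^2\bigr)$, which lies above $h$. The $\lambda$-proximal hull of $f$ is instead $-(-f_\lambda)_\lambda$. This slip is harmless because your proof never uses that description, but you should correct it.
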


\begin{proof}
    As $g$ is $L$-smooth, $-g$ is $L$-smooth, and thus $L$-weakly convex; see, e.g.,~\cite[Proposition~2.3]{pougkakiotis2023zeroth} and~\cite[Proposition~4.12]{vial1983strong}. Hence, the function
    $$
        -g+\frac{1}{2\lambda}\|\bullet\|^2=\left(-g+\frac{L}{2}\|\bullet\|^2\right)+\left(\frac{1}{2\lambda}-\frac{L}{2}\right)\|\bullet\|^2
    $$
    is convex. It then follows from~\cite[Example~11.26(d)]{rockafellar2009variational} that $-g$ is $\lambda$-proximal; i.e., 
    $$
        -g=-\bigl(-(-g)_{\lambda}\bigr)_{\lambda}
        \quad\iff\quad
        g=\bigl(-(-g)_{\lambda}\bigr)_{\lambda}
    $$
    by~\cite[Example~1.44]{rockafellar2009variational}. It remains to note that as $-g$ is $L$-weakly convex, $(-g)_{\lambda}$ is $\max\{1/\lambda,L/(1-\lambda L)\}$-smooth by~\cite[Lemma~2.5(3)]{davis2022proximal}, and $-(-g)_{\lambda}$ is thus $\max\{1/\lambda,L/(1-\lambda L)\}$-smooth as well. 
\end{proof}

\subsubsection{Clarke/Goldstein SOSPs via the Moreau envelope}

Drawing on the connections discussed earlier, we propose to define the notion of Clarke SOSPs for weakly convex functions in terms of their Moreau envelopes, which is justified by the following proposition, and also addresses in part the second open question raised in~\cite[Section~5]{khanh2025local}.

\begin{proposition}\label{prop:optimality-via-Moreau}
    Let $\rho>0$ and $\lambda\in(0,1/\rho)$ be arbitrary, $f:\R^n\rightarrow\R$ be
    $\rho$-weakly convex, and $\bx\in\R^n$ be a local minimizer of $f$. 
    We have
    $$
        \nabla f_{\lambda}(\bx)=\bd{0}
        \quad\text{and}\quad
        \min_{\bw\in\bbS^n}\max_{\bz\in\partial_{\dC}^2 f_{\lambda}(\bx)(\bw)}\langle\bz,\bw\rangle\ge 0.
    $$
\end{proposition}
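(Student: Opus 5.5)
The plan is to reduce everything to Lemma~\ref{lma:Clarke-SOSP} applied to the Moreau envelope $f_{\lambda}$, which is $L$-smooth for some finite $L$ (e.g.\ $\max\{1/\lambda,\rho/(1-\lambda\rho)\}$ by~\cite[Lemma~2.5(3)]{davis2022proximal}). Once we know that $\bx$ is a local minimizer of $f_{\lambda}$, Lemma~\ref{lma:Clarke-SOSP} gives directly
$$
\nabla f_{\lambda}(\bx)=\bd{0}\quad\text{and}\quad\min_{\bw\in\bbS^n}\max_{\bA\in\partial_{\dC}^2 f_{\lambda}(\bx)}\bw^{\T}\bA\bw\ge 0.
$$
Here I read $\partial_{\dC}^2 f_{\lambda}(\bx)(\bw)$ in the statement as the set $\{\bA\bw:\bA\in\partial_{\dC}^2 f_{\lambda}(\bx)\}$. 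With that reading, $\langle\bz,\bw\rangle=\bw^{\T}\bA\bw$, so the displayed inequality is exactly the claim.

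The main step is therefore to show that a local minimizer of $f$ is a local minimizer of $f_{\lambda}$. The paper cites~\cite[Theorem~3.5]{khanh2025local} for exactly this coincidence of local minimizers, and I would invoke it. To keep the proof self-contained, I would also sketch a direct argument, which takes three steps.

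First, since $\bx$ is a local minimizer, $\bd{0}\in\partial f(\bx)$. By~\cite[Lemma~2.5(4)]{davis2022proximal} this gives $\operatorname{Prox}_{\lambda f}(\bx)=\bx$ and $f_{\lambda}(\bx)=f(\bx)$.

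Second, suppose $f\ge f(\bx)$ on $\bx+r\bbB^n$. For $\lambda<1/\rho$, the proximal map is single-valued and Lipschitz (with constant $1/(1-\lambda\rho)$), hence continuous. So for $\bx'$ close enough to $\bx$, the point $\by':=\operatorname{Prox}_{\lambda f}(\bx')$ lies in $\bx+r\bbB^n$.

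Third, for such $\bx'$,
$$
f_{\lambda}(\bx')=f(\by')+\frac{1}{2\lambda}\|\by'-\bx'\|^2\ge f(\by')\ge f(\bx)=f_{\lambda}(\bx).
$$
This shows $\bx$ is a local minimizer of $f_{\lambda}$.

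I expect the main obstacle to be the continuity and single-valuedness of $\operatorname{Prox}_{\lambda f}$ near $\bx$. This follows because $f+\frac{1}{2\lambda}\|\bullet-\bx'\|^2$ is $(1/\lambda-\rho)$-strongly convex, so it has a unique minimizer depending Lipschitz-continuously on $\bx'$; I would cite this rather than reprove it. Everything else is direct.
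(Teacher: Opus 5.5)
Your proposal is correct and follows the paper's proof. Both first transfer local minimality from $f$ to $f_{\lambda}$, which the paper does by invoking \cite[Theorem~3.5]{khanh2025local} after checking that $1/\rho$ is a prox-boundedness threshold. Both then use the $L$-smoothness of $f_{\lambda}$ together with the Clarke second-order condition, i.e., Lemma~\ref{lma:Clarke-SOSP}. Your self-contained argument for the transfer step is also valid: $\operatorname{Prox}_{\lambda f}(\bx)=\bx$ and $\operatorname{Prox}_{\lambda f}$ is $1/(1-\lambda\rho)$-Lipschitz, so nearby $\bx'$ give $f_{\lambda}(\bx')\ge f\bigl(\operatorname{Prox}_{\lambda f}(\bx')\bigr)\ge f(\bx)=f_{\lambda}(\bx)$. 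This route avoids the prox-boundedness check that the paper needs in order to apply the cited theorem.
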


\begin{proof}
    We shall apply~\cite[Theorem~3.5]{khanh2025local} to show that $\bx$ is also a local minimizer of $f_{\lambda}$. To this end, we first claim that a prox-bounded threshold of $f$ (in the sense of~\cite[Lemma~3.1(iii)]{khanh2025local}) can be $1/\rho$; i.e., $f_{\mu}(\bx)>-\infty$ for all $\mu\in(0,1/\rho)$ and $\bx\in\R^n$. But this is trivial, as $\by\mapsto f(\by)+\frac{1}{2\mu}\|\by-\bx\|^2$ is strongly convex (see, e.g.,~\cite[Remark~2.5(iii)]{khanh2025inexact} for more details), and thus lower bounded.
    Hence, by~\cite[Theorem~3.5]{khanh2025local}, we have $\bx$ is a local minimizer of $f_{\lambda}$. As $f_{\lambda}$ is $\max\{\lambda^{-1}, {\rho}/{(1-\lambda \rho)}\}$-smooth by~\cite[Lemma~2.5(3)]{davis2022proximal}, the desired claims then immediately follow by applying Fermat’s rule (see, e.g.,~\cite[Theorem~10.1]{rockafellar2009variational}) and the second-order optimality condition~\cite[Theorem~3.1]{hiriart1984generalized}.
\end{proof}

With Proposition~\ref{prop:optimality-via-Moreau} in place, we are now allowed to define Goldstein approximate SOSPs for weakly convex functions.

\begin{definition}
\label{def:Goldstein-SOSP-weakly-convex}
    Given constants $\varepsilon_1,\varepsilon_2,\delta\ge 0$, a $\rho$-weakly convex function $f:\R^n\rightarrow\R$ for some $\rho>0$, 
    and a point $\bx\in\R^n$, we say that $\bx$ is an $(\varepsilon_1,\varepsilon_2,\delta)$-Goldstein approximate SOSP of $f$ if 
    $$
        \|\nabla f_{{1}/{(2\rho)}}(\bx)\|\le\varepsilon_1
        \quad\text{and}\quad
        \min_{\bw\in\bbS^n}\max_{\bA\in\partial_{\delta}^2 f_{{1}/{(2\rho)}}(\bx)}\langle\bA\bw,\bw\rangle\ge-\varepsilon_2.
    $$
\end{definition}

\subsubsection{Computing Goldstein approximate SOSPs}\label{sec:weakly-convex-Goldstein-computation}
We now apply Algorithm~\ref{alg:random-meta} to compute Goldstein approximate SOSPs of weakly convex functions.

\begin{algorithm}[!h]
\begin{algorithmic}[1]
\REQUIRE A $\rho$-weakly convex function $g:\R^n\rightarrow\R$, an initial point $\bx_0\in\R^n$, tolerances $\varepsilon_1,\varepsilon_2,\delta,\eta>0$. 



\STATE Run Algorithm~\ref{alg:random-meta} with
$$
    f:=g_{{1}/{(2\rho)}}
    \quad\text{and}\quad
    L:=2\rho,
$$
while keeping all the other inputs the same as the ones here;

\end{algorithmic}
\caption{
Computing $(\varepsilon_1,\varepsilon_2,\delta)$-Goldstein approximate SOSPs of
weakly convex functions
}
\label{alg:random-wc-meta}
\end{algorithm}

We remark that the only additional ingredient needed in the implementation is the evaluation of $\nabla g_{{1}/{(2\rho)}}$. But as $\nabla g_{{1}/{(2\rho)}}(\bx)=2\rho\bigl(\bx-\operatorname{Prox}_{{g}/{(2\rho)}}(\bx)\bigr)$ by~\cite[Lemma~2.5(1)]{davis2022proximal}, it suffices to compute
$$
    \operatorname{Prox}_{{g}/{(2\rho)}}(\bx)=\arg\min\bigl\{g(\by)+\rho\|\by-\bx\|^2:\by\in\R^n\bigr\},
$$
which is strongly convex and can thus be solved efficiently; see, e.g.,~\cite[Section~3.4]{bubeck2015convex}.

\begin{corollary}\label{cor:weakly-convex-Goldstein-SOSP}
    Consider the iterates $\bx_0,\ldots,\bx_{K_{\star}}$ generated by Algorithm~\ref{alg:random-wc-meta}. With probability at least $1-\eta$, 
    there exists some $k_{\star}=0,\ldots,K_{\star}-1$ such that
    $$
        \|\nabla g_{{1}/{(2\rho)}}(\bx_{k_{\star}+1})\| \le \varepsilon_1
        \quad\text{and}\quad
        \min_{\bw\in\bbS^n}\max_{\bA\in\partial_{\delta}^2 g_{{1}/{(2\rho)}}(\bx_{k_{\star}+1})}\langle\bA\bw,\bw\rangle\ge-\varepsilon_2.
    $$
\end{corollary}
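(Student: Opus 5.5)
The plan is to reduce Corollary~\ref{cor:weakly-convex-Goldstein-SOSP} to Theorem~\ref{thm:main-Goldstein-SOSP}. Algorithm~\ref{alg:random-wc-meta} is exactly Algorithm~\ref{alg:random-meta} run on $f:=g_{1/(2\rho)}$ with $L:=2\rho$. So it suffices to check that this $f$ satisfies the one standing hypothesis of Theorem~\ref{thm:main-Goldstein-SOSP}, namely that $f$ is real-valued and $L$-smooth with $L=2\rho$. Once that holds, the conclusion of Theorem~\ref{thm:main-Goldstein-SOSP} with $f=g_{1/(2\rho)}$ is verbatim the claimed statement.

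For the smoothness, I would take $\lambda=1/(2\rho)\in(0,1/\rho)$ and invoke \cite[Lemma~2.5(3)]{davis2022proximal}, as already done in the proof of Proposition~\ref{prop:optimality-via-Moreau}. It gives that $g_{\lambda}$ is $\max\{\lambda^{-1},\rho/(1-\lambda\rho)\}$-smooth. With $\lambda=1/(2\rho)$ we get $\lambda^{-1}=2\rho$ and $\rho/(1-1/2)=2\rho$, so the constant is exactly $2\rho=L$.

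Real-valuedness needs two facts:
\begin{itemize}
\item The minimization defining $g_{1/(2\rho)}(\bx)$ has the objective $g(\by)+\rho\|\by-\bx\|^2$, which is $\rho$-strongly convex, so its minimum is finite.
\item $\Delta=f_{\sigma}(\bx_0)-\min f_{\sigma}$, used in Algorithm~\ref{alg:random-smooth}, must be finite. This requires $g$, and hence $f_{\sigma}$, to be bounded below, which is implicitly assumed whenever $\Delta$ appears. I would simply note this assumption.
\end{itemize}

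Next I would check that the oracle calls in Algorithms~\ref{alg:random-meta}--\ref{alg:random-smooth} only use evaluations of $\nabla f=\nabla g_{1/(2\rho)}$. These are available via $2\rho(\bx-\operatorname{Prox}_{g/(2\rho)}(\bx))$ by \cite[Lemma~2.5(1)]{davis2022proximal}, so the algorithm is well defined.

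The statement of Theorem~\ref{thm:main-Goldstein-SOSP} then gives, with probability at least $1-\eta$, an index $k_\star$ such that $\|\nabla g_{1/(2\rho)}(\bx_{k_\star+1})\|\le\varepsilon_1$ and the min--max condition over $\partial_\delta^2 g_{1/(2\rho)}(\bx_{k_\star+1})$ holds with $-\varepsilon_2$. This is exactly the definition of an $(\varepsilon_1,\varepsilon_2,\delta)$-Goldstein approximate SOSP in Definition~\ref{def:Goldstein-SOSP-weakly-convex}.

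There is no real obstacle. The only delicate point is confirming that the smoothness constant is exactly $2\rho$, so that the choice $L:=2\rho$ in the algorithm is legitimate. This matters because $\xi=cL\sqrt n/\sigma$ and the batch sizes depend on $L$ through Theorem~\ref{thm:smoothing} and Propositions~\ref{prop:gradient-estimator}--\ref{prop:Hessian-estimator}.
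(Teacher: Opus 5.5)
Your proposal is correct and takes the same route as the paper: the paper's proof notes that $g_{1/(2\rho)}$ is $2\rho$-smooth by \cite[Lemma~2.5(3)]{davis2022proximal} (your computation $\max\{2\rho,\rho/(1-1/2)\}=2\rho$ is exactly the needed check) and then applies Theorem~\ref{thm:main-Goldstein-SOSP}. Your extra remarks are sanity checks that the paper leaves implicit: finiteness of the envelope, the lower-boundedness needed for $\Delta<\infty$, and computing $\nabla g_{1/(2\rho)}$ via the proximal map.
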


\begin{proof}
    It suffices to note that $g_{{1}/{(2\rho)}}$ is $2\rho$-smooth by~\cite[Lemma~2.5(3)]{davis2022proximal}, and apply Theorem~\ref{thm:main-Goldstein-SOSP}.
\end{proof}

\subsection{Second-order bilevel optimization and beyond}
In this subsection, we focus on the following optimistic/pessimistic bilevel optimization problems
$$
    \min_{\bx\in\bbX}\min_{\by\in\bbY^{\star}(\bx)}u(\bx,\by)
    \quad\text{and}\quad
    \min_{\bx\in\bbX}\max_{\by\in\bbY^{\star}(\bx)}u(\bx,\by),\quad\text{where $\bbY^{\star}(\bx):=\operatorname{Arg}\min\{\ell(\bx,\by):\by\in\bbY\}$}.
$$
where $u,\ell:\bbX\times\bbY\rightarrow\R$, $\bbX\subseteq\R^n$, and $\bbY\subseteq\R^m$. Equivalently, one may also pose the problems as
$$
    \min\mleft\{h_o(\bx):=\min_{\by\in\bbY^{\star}(\bx)}u(\bx,\by):\bx\in\bbX\mright\}
    \quad\text{and}\quad
    \min\mleft\{h_p(\bx):=\max_{\by\in\bbY^{\star}(\bx)}u(\bx,\by):\bx\in\bbX\mright\},
$$
respectively, where $h_o,h_p:\bbX\rightarrow\R$ are referred to as the hyperobjective functions.
In cases where $h_o=h_p$, we simply write $h$ for both; e.g., when $|\bbY^{\star}(\bx)|=1$ for all $\bx\in\bbX$, and in such a case, we also write $\by^{\star}$ for $\bbY^{\star}$.
In the sequel, a bilevel regime, e.g., NC--SC, means that $h$ is nonconvex (in general) whereas $\ell(\bx,\bullet)$ is strongly convex (for all $\bx\in\bbX$).
When discussing the NC--SC
and NC--P\L{}
regimes, we take $\bbX=\R^n$ and $\bbY=\R^m$, whereas in other regimes $\bbX$ and $\bbY$ may be more general.
As an aside, as bilevel optimization is not the main focus of this paper, we leave issues such as inexactness to interested readers and future work.
In closing, we note that bilevel programs also arise widely in machine learning and beyond; e.g., hyperparameter optimization~\cite{chen2024lower}.
(For further examples, we refer interested readers to~\cite[Section~1]{chen2024finding},~\cite[Section~1]{chen2025set}, and the references therein.)

\subsubsection{The NC--SC regime without third-order differentiability}
\label{sec:NC-SC-bilevel}
Existing studies on second-order NC--SC bilevel optimization are typically based on the third-order differentiability of $\ell$ to ensure that $h$ is twice differentiable, thereby enabling the use of the machinery of second-order smooth optimization; see, e.g.,~\cite[Assumption~3]{yang2023accelerating},~\cite[Assumption~3.2]{chen2025near}, and~\cite[Assumption~3]{huang2025efficiently}.
However, the third-order differentiability condition can be rather demanding, and it is thus very natural to ask whether we can still say something meaningful in its absence. With the techniques developed in this paper, the answer is in the affirmative, as $h$ remains $L$-smooth even without the condition, which follows from, e.g.,~\cite[Lemma~2.2]{ghadimi2018approximation} and~\cite[Proposition~3.1]{chen2025near}, and is quoted below. 

\begin{lemma}[{\cite[Proposition~3.1]{chen2025near}}]\label{lma:bilevel-L-smooth}
    Suppose that $\ell(\bx,\bullet)$ is $\mu$-strongly convex,
    $\ell$ has $L_{\ell}$-Lipschitz gradient and $\rho_{\ell}$-Lipschitz Hessian, 
    $u(\bx,\bullet)$ is $C_u$-Lipschitz,
    $u$ is 
    twice continuously differentiable
    and $L_u$-smooth, 
    and $h$ is lower bounded. 
    Then, we have $h$ is $L_h$-smooth, where 
    $$
        L_h\lesssim \max\{C_u,L_u,L_{\ell},\rho_{\ell}\}^4/\mu^3.
    $$
\end{lemma}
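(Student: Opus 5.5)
The plan is the classical implicit-function route to hypergradient smoothness, with all third derivatives of $\ell$ replaced by the Lipschitz constant $\rho_{\ell}$ of its Hessian.

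\textbf{Step 1: the lower-level solution map.} Since $\ell(\bx,\bullet)$ is $\mu$-strongly convex, $\by^{\star}(\bx)$ is unique. It is characterized by $\nabla_{\by}\ell(\bx,\by^{\star}(\bx))=\bd{0}$. The Jacobian $\nabla_{\by\by}^2\ell\succeq\mu\bI$ is invertible, and $\nabla\ell$ is $C^1$ because $\ell$ has Lipschitz Hessian. So the implicit function theorem makes $\by^{\star}$ continuously differentiable, with
$$
    \nabla\by^{\star}(\bx)=-\nabla_{\by\by}^2\ell(\bx,\by^{\star}(\bx))^{-1}\nabla_{\by\bx}^2\ell(\bx,\by^{\star}(\bx)).
$$
From $\|\nabla_{\by\bx}^2\ell\|\le L_{\ell}$ and $\|(\nabla_{\by\by}^2\ell)^{-1}\|\le 1/\mu$, the map $\by^{\star}$ is $(L_{\ell}/\mu)$-Lipschitz. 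The chain rule then gives
$$
    \nabla h(\bx)=\nabla_{\bx}u(\bx,\by^{\star}(\bx))+\nabla\by^{\star}(\bx)^{\T}\nabla_{\by}u(\bx,\by^{\star}(\bx)).
$$
Here $\|\nabla_{\by}u\|\le C_u$ because $u(\bx,\bullet)$ is $C_u$-Lipschitz.

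\textbf{Step 2: Lipschitz continuity of $\nabla h$.} I would bound $\|\nabla h(\bx)-\nabla h(\bx')\|$ term by term, writing $\bz=(\bx,\by^{\star}(\bx))$ and $\bz'=(\bx',\by^{\star}(\bx'))$. Then $\|\bz-\bz'\|\le(1+L_{\ell}/\mu)\|\bx-\bx'\|$.

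First, $\nabla_{\bx}u$ is $L_u$-Lipschitz in $\bz$, which contributes $L_u(1+L_{\ell}/\mu)$.

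Second, for the product $\nabla\by^{\star\T}\nabla_{\by}u$, I would add and subtract mixed terms. This yields $\|\nabla\by^{\star}\|\cdot L_u\|\bz-\bz'\|+C_u\|\nabla\by^{\star}(\bx)-\nabla\by^{\star}(\bx')\|$.

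Third, the Lipschitz constant of $\nabla\by^{\star}$ comes from the resolvent identity $\bA^{-1}-\bB^{-1}=\bA^{-1}(\bB-\bA)\bB^{-1}$ applied to the Hessian blocks. Those blocks are $\rho_{\ell}$-Lipschitz in $\bz$. This gives a bound of order $(\rho_{\ell}/\mu+\rho_{\ell}L_{\ell}/\mu^2)(1+L_{\ell}/\mu)$.

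\textbf{Step 3: the constant.} Multiply out with $M:=\max\{C_u,L_u,L_{\ell},\rho_{\ell}\}$. Without loss of generality $\mu\le L_{\ell}\le M$, since strong convexity forces $\mu\le L_{\ell}$. Each factor $1+L_{\ell}/\mu$ is then at most $2M/\mu$. The worst term is $C_u\cdot\rho_{\ell}L_{\ell}/\mu^2\cdot L_{\ell}/\mu\lesssim M^4/\mu^3$, and every other term is dominated by it. Hence $L_h\lesssim M^4/\mu^3$. Lower boundedness of $h$ plays no role in smoothness itself; it is only needed so that $\Delta<\infty$ in later use.

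\textbf{Main obstacle.} The only subtle point is justifying differentiability of $\by^{\star}$ and the formula for $\nabla h$ under only $C^2$-type regularity of $\ell$. This is fine because the implicit function theorem needs only $C^1$ of $\nabla_{\by}\ell$, and $u$ is $C^2$, hence $C^1$. No third derivatives are ever used: only the Lipschitz constant $\rho_{\ell}$ of the Hessian enters, which is precisely the point of the lemma.
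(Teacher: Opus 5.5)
Your proposal is correct. It is essentially the standard argument behind the result the paper quotes without proof: \cite[Proposition~3.1]{chen2025near}, building on \cite[Lemma~2.2]{ghadimi2018approximation}. That argument derives $\nabla\by^{\star}$ and the hypergradient formula via the implicit function theorem, then bounds each term's Lipschitz constant using only $\rho_{\ell}$ and the resolvent identity.

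Two small points are worth stating explicitly. First, the local solution given by the implicit function theorem coincides with $\by^{\star}$, because strong convexity makes $\by^{\star}(\bx)$ the unique zero of $\nabla_{\by}\ell(\bx,\bullet)$. Second, the collapse to $M^4/\mu^3$ rests on $M^j/\mu^{j-1}\le M^4/\mu^3$ for $j\le 4$, which holds since $\mu\le M$. Individual terms need not be dominated by the single term $C_u\rho_{\ell}L_{\ell}^2/\mu^3$, as your wording suggests; what matters is that each is bounded by $M^4/\mu^3$ up to a constant.
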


Of course, away from the condition, the twice differentiability of $h$ can no longer be guaranteed.

\begin{example}
    Consider the functions $u,\ell:\R^2\rightarrow\R$ and $r:\R\rightarrow\R$, given by $u(x,y):=x^2+y$, $\ell(x,y):=(y-x)^2/2+r(y)$, and
    $$
        r(y):=
        \begin{dcases}
        \frac{1}{6}|y|^3, & |y| \le 1, \\
        \frac{1}{2} y^2-\frac{1}{2}|y|+\frac{1}{6}, & |y| > 1,
        \end{dcases}
    $$
    respectively. It can be calculated that
    $$
        r^{\prime}(y)=
        \begin{dcases}
        \frac{1}{2}y|y|, & |y| \le 1, \\
        y-\frac{1}{2}\operatorname{sign}(y), & |y| > 1,
        \end{dcases}
        \quad\text{and}\quad
        r^{\prime\prime}(y)=
        \begin{dcases}
        |y|, & |y| \le 1, \\
        1, & |y| > 1,
        \end{dcases}
    $$
    which imply all the assumptions in Lemma~\ref{lma:bilevel-L-smooth} except the lower boundedness of $h$, and that $\ell(x,\bullet)$ is not three times differentiable. To see that $h$ is indeed lower bounded, we calculate
    $$
        y^{\star}(x)
        =
        \begin{dcases}
            (2 x-1)/4, & x\leq -{3}/{2},\\
            1-\sqrt{1-2 x}, & -{3}/{2}<x\leq 0,\\
            \sqrt{2 x+1}-1, & 0<x\leq {3}/{2},\\
            (2 x+1)/4, & x>3/2,
        \end{dcases}
    $$
    and thus
    $$
        \min\{h(x):x\in\R\}
        =\min\bigl\{x^2+y^{\star}(x):x\in\R\bigr\}\approx -0.1823 >-\infty,
    $$
    as desired. However, as $y^{\star}$ is not twice differentiable at $0$, neither is $h$.
\end{example}

We now apply Algorithm~\ref{alg:random-meta} to compute Goldstein approximate SOSPs of NC--SC bilevel programs.

\begin{algorithm}[!h]
\begin{algorithmic}[1]
\REQUIRE Functions $u,\ell:\R^n\times\R^m\rightarrow\R$ satisfying all the assumptions in Lemma~\ref{lma:bilevel-L-smooth}, an initial point $\bx_0\in\R^n$, tolerances $\varepsilon_1,\varepsilon_2,\delta,\eta>0$. 



\STATE Run Algorithm~\ref{alg:random-meta} with
$$
    f:=h=u\bigl(\bullet,\by^{\star}(\bullet)\bigr)
    \quad\text{and}\quad
    L:=L_h,
$$
while keeping all the other inputs the same as the ones here;

\end{algorithmic}
\caption{
Computing $(\varepsilon_1,\varepsilon_2,\delta)$-Goldstein approximate SOSPs of
NC--SC bilevel programs
}
\label{alg:random-NC-SC-meta}
\end{algorithm}

We remark that the only additional ingredient needed in the implementation is the evaluation of $\nabla h$; i.e., the hypergradient. But as $\nabla h$ admits an explicit closed-form expression (see, e.g.,~\cite[(2)]{chen2025near}), it evaluation amounts to nothing more than matrix multiplications, which can be done efficiently.

\begin{corollary}
    Consider the iterates $\bx_0,\ldots,\bx_{K_{\star}}$ generated by Algorithm~\ref{alg:random-NC-SC-meta}. With probability at least $1-\eta$, 
    there exists some $k_{\star}=0,\ldots,K_{\star}-1$ such that
    $$
        \|\nabla h(\bx_{k_{\star}+1})\| \le \varepsilon_1
        \quad\text{and}\quad
        \min_{\bw\in\bbS^n}\max_{\bA\in\partial_{\delta}^2 h(\bx_{k_{\star}+1})}\langle\bA\bw,\bw\rangle\ge-\varepsilon_2.
    $$
\end{corollary}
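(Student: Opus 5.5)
The plan is to reduce the corollary directly to Theorem~\ref{thm:main-Goldstein-SOSP}, exactly as in the proof of Corollary~\ref{cor:weakly-convex-Goldstein-SOSP}. Algorithm~\ref{alg:random-NC-SC-meta} is nothing but Algorithm~\ref{alg:random-meta} run on $f:=h$ with smoothness constant $L:=L_h$. So the only thing to verify is that this pair meets the standing hypotheses of Algorithm~\ref{alg:random-meta}. These are: $h$ is $L_h$-smooth, the oracle $\nabla h$ is available, and the quantity $\Delta=h_{\sigma}(\bx_0)-\min_{\bx} h_{\sigma}(\bx)$ used to set $K_{\star}$ in Algorithm~\ref{alg:random-smooth} is finite.

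First, I will invoke Lemma~\ref{lma:bilevel-L-smooth}. Under the assumptions required of $u$ and $\ell$ in Algorithm~\ref{alg:random-NC-SC-meta}, it gives that $h=u(\bullet,\by^{\star}(\bullet))$ is $L_h$-smooth. Here $\by^{\star}$ is single-valued because $\ell(\bx,\bullet)$ is strongly convex, so $h_o=h_p=h$. The hypergradient $\nabla h$ is supplied in closed form, as remarked before the corollary. This is all the first-order information Algorithm~\ref{alg:random-smooth} queries, since both $\widehat{\bg}_{b_g}$ and $\widehat{\bH}_{b_H}$ only evaluate $\nabla h$ at perturbed points.

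Second, I will check that $\Delta<\infty$. By assumption $h$ is lower bounded, say $h\ge h_{\inf}$. Then $h_{\sigma}(\bx)=\mathbb{E}_{\bu}[h(\bx+\sigma\bu)]\ge h_{\inf}$ for all $\bx$, and $h_{\sigma}(\bx_0)$ is finite because $h$ is continuous. Hence $\Delta$, and with it $K_{\star}$, $b_g$ and $b_H$, are well defined and finite. One minor point is that $\min_{\bx}h_{\sigma}$ might only be an infimum, but the paper does not distinguish the two, and the descent argument only uses $\Delta\ge h_{\sigma}(\bx_0)-h_{\sigma}(\bx_{K_{\star}})$.

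With these hypotheses in place, Theorem~\ref{thm:main-Goldstein-SOSP} applied to $f=h$ and $L=L_h$ yields the claim verbatim. With probability at least $1-\eta$, some iterate $\bx_{k_{\star}+1}$ satisfies $\|\nabla h(\bx_{k_{\star}+1})\|\le\varepsilon_1$ together with the Goldstein Hessian condition for $\partial_{\delta}^2 h$. I expect no real obstacle, because the substantive content (smoothness of $h$ without third-order differentiability of $\ell$) is already packaged in Lemma~\ref{lma:bilevel-L-smooth}. The only step needing a line of care is the finiteness of $\Delta$ from the lower boundedness of $h$.
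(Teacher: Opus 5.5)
Your proposal is correct and follows the paper's proof, which likewise just combines Lemma~\ref{lma:bilevel-L-smooth} (giving $L_h$-smoothness of $h$) with Theorem~\ref{thm:main-Goldstein-SOSP}. Your extra check that $\Delta<\infty$, using the assumed lower boundedness of $h$, fills in a detail the paper leaves implicit.
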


\begin{proof}
    It suffices to combine Lemma~\ref{lma:bilevel-L-smooth} with Theorem~\ref{thm:main-Goldstein-SOSP}.
\end{proof}

\subsubsection{The NC--P\L{} regimes}
While the NC--SC regime gives rise to an $L$-smooth hyperobjective function, the strong convexity assumption can be demanding. 
In fact, with the help of the Polyak--\L{}ojasiewicz inequality~\cite{polyak1963gradient}, one can move beyond the strong convexity, albeit with a slight reduction in smoothness.
(Let us restrict attention to the pessimistic case as the other case is a mirror image.)

\begin{lemma}[{\cite[Theorem~3]{chen2025set}}]\label{lma:bilevel-weakly-convex}
    Suppose that $\ell$ is $L_{\ell}$-smooth and twice differentiable with $\nabla\nabla_{\by}\ell$ being $H_{\ell}$-Lipschitz, $\bbY^{\star}(\bx)$ is nonempty, closed, and convex for all $\bx\in\R^n$, $u$ is $M_u$-Lipschitz and $L_u$-smooth, $\operatorname{dom}(h_p)\neq\emptyset$, and there exists a constant $\tau>0$ such that
    $$
        \operatorname{dist}\bigl(\by, \bbY^{\star}(\bx)\bigr) \le \tau\|\nabla_{\by} \ell(\bx, \by)\|
        \quad\text{for all $\bx\in\R^n$ and $\by\in\R^m$}.
    $$
    Then, $h_p$ is $\bigl(M_u L_{\bbY^{\star}}+L_u(1+L_{\bbY^{\star}})\bigr)$-weakly convex, where $L_{\bbY^{\star}}:=\max\bigl\{2 H_{\ell} \tau(1+9 L_{\ell}^2 \tau^2), 4 L_{\ell}^2 \tau^2\bigr\}$.
\end{lemma}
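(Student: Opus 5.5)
The plan is to exhibit $h_p+\frac{\rho}{2}\|\bullet\|^2$ as a supremum of convex functions, with $\rho:=M_uL_{\bbY^{\star}}+L_u(1+L_{\bbY^{\star}})$. The engine is a set-valued Lipschitz property of the solution map: $\bbY^{\star}$ is $L_{\bbY^{\star}}$-Lipschitz in the Hausdorff sense. Given that, the weak convexity follows by a linearization argument in the spirit of the set-smoothness of~\cite{chen2025set}.

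\textbf{Step 1: Lipschitz continuity of $\bbY^{\star}$.} Fix $\bx,\bx'$ and $\by\in\bbY^{\star}(\bx)$. The error bound gives
$\operatorname{dist}(\by,\bbY^{\star}(\bx'))\le\tau\|\nabla_{\by}\ell(\bx',\by)\|$.
Since $\nabla_{\by}\ell(\bx,\by)=\bd{0}$, this is at most $\tau\|\nabla_{\by}\ell(\bx',\by)-\nabla_{\by}\ell(\bx,\by)\|$. This crude bound only yields a constant $\tau L_\ell$. The stated constant suggests a finer argument: use a first-order expansion of $\nabla_{\by}\ell$ in $\bx$, via $\nabla\nabla_{\by}\ell$ being $H_\ell$-Lipschitz. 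The $2H_\ell\tau(1+9L_\ell^2\tau^2)$ term then controls the curvature of the set map, while $4L_\ell^2\tau^2$ controls its first-order displacement.

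\textbf{Step 2: first-order approximation of the set map.} Closedness and convexity of $\bbY^{\star}(\bx)$ give well-defined projections. I would show that for each $\bx_0$, $\by_0\in\bbY^{\star}(\bx_0)$ and direction $\bdd$, there is a point $\by_t\in\bbY^{\star}(\bx_0+t\bdd)$ satisfying
$$\|\by_t-\by_0-t\bv\|\le\tfrac{1}{2}L_{\bbY^{\star}}t^2\|\bdd\|^2$$
for a suitable linear response $\bv$. To get it, project $\by_0+t\bv$ onto the new solution set and bound that distance by $\tau\|\nabla_{\by}\ell(\bx_0+t\bdd,\by_0+t\bv)\|$. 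This quantity is second order once $\bv$ solves the linearized stationarity equation $\nabla_{\by\bx}\ell\,\bdd+\nabla_{\by\by}\ell\,\bv\approx\bd{0}$ in least-squares sense. The error bound ensures such a $\bv$ exists with $\|\bv\|\lesssim L_\ell\tau\|\bdd\|$.

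\textbf{Step 3: lower quadratic model for $h_p$.} For any $\by_0\in\bbY^{\star}(\bx_0)$ nearly attaining $h_p(\bx_0)$, use $h_p(\bx_0+t\bdd)\ge u(\bx_0+t\bdd,\by_t)$. Expand $u$ with $L_u$-smoothness and use $M_u$-Lipschitzness to absorb the displacement error:
$$u(\bx_0+t\bdd,\by_t)\ge u(\bx_0,\by_0)+t\langle\nabla u,(\bdd,\bv)\rangle-\tfrac{L_u}{2}t^2(1+L_{\bbY^{\star}})\|\bdd\|^2-\tfrac{M_u}{2}L_{\bbY^{\star}}t^2\|\bdd\|^2,$$
after bounding $\|\bv\|^2$ by $L_{\bbY^{\star}}\|\bdd\|^2$.

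This exhibits, at every point, a linear-minus-$\frac{\rho}{2}\|\cdot\|^2$ minorant of $h_p$ that touches it. Any function with this property everywhere is $\rho$-weakly convex, since $h_p+\frac{\rho}{2}\|\cdot\|^2$ is then the supremum of affine minorants. The condition $\operatorname{dom}(h_p)\neq\emptyset$ rules out triviality.

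The main obstacle is Step 2, namely making the linear response $\bv$ uniform enough that the constants match $L_{\bbY^{\star}}$. Since the lemma is quoted from~\cite[Theorem~3]{chen2025set}, I would follow its set-smoothness argument closely for that step rather than re-derive the constants.
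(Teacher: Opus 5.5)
The paper gives no proof of this lemma; it imports it from \cite[Theorem~3]{chen2025set}. Your architecture is the mechanism behind that result: a first-order approximation of $\bbY^{\star}$ with quadratic error (set smoothness), turned into a linear-minus-quadratic minorant of $h_p$. Your Step~3 reproduces the modulus $M_uL_{\bbY^{\star}}+L_u(1+L_{\bbY^{\star}})$ exactly. The sup-of-affine-minorants conclusion is also sound, including the use of near-maximizers and $\operatorname{dom}(h_p)\neq\emptyset$ for the extended-value issue. One framing correction: Hausdorff-Lipschitz continuity of $\bbY^{\star}$ is not the engine. By itself it yields Lipschitz continuity of $h_p$, not weak convexity; its real job is to supply bounded difference quotients in Step~2.

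The genuine gap is Step~2, which you leave to the reference and describe as solving $\nabla_{\by\bx}\ell\,\bdd+\nabla_{\by\by}\ell\,\bv\approx\bd{0}$ in the least-squares sense. An inexact solution leaves a residual $\bd{r}\neq\bd{0}$, hence a first-order term $t\|\bd{r}\|$ in $\|\nabla_{\by}\ell(\bx_0+t\bdd,\by_0+t\bv)\|$, and the $O(t^2)$ estimate fails. You need exact solvability with a norm bound, and that is precisely where the error bound does nontrivial work.

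It follows from your Step~1. Let $\by_t$ be the projection of $\by_0$ onto $\bbY^{\star}(\bx_0+t\bdd)$; then $\|\by_t-\by_0\|\le\tau L_{\ell}t\|\bdd\|$. Expand $\bd{0}=\nabla_{\by}\ell(\bx_0+t\bdd,\by_t)$ about $(\bx_0,\by_0)$ using the $H_{\ell}$-Lipschitz Jacobian, divide by $t$, and pass to a cluster point $\bw$ of $(\by_t-\by_0)/t$. This gives $\nabla_{\by\bx}\ell\,\bdd+\nabla_{\by\by}\ell\,\bw=\bd{0}$ with $\|\bw\|\le\tau L_{\ell}\|\bdd\|$. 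Hence $\operatorname{range}\nabla_{\by\bx}\ell(\bx_0,\by_0)\subseteq\operatorname{range}\nabla_{\by\by}\ell(\bx_0,\by_0)$.

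Now set $\bA:=-\bigl(\nabla_{\by\by}\ell(\bx_0,\by_0)\bigr)^{\dagger}\nabla_{\by\bx}\ell(\bx_0,\by_0)$. This $\bA$ solves the linearized equation exactly. It is linear in $\bdd$, which Step~3 needs; otherwise the minorant is not affine after adding $\frac{\rho}{2}\|\bullet\|^2$. It also satisfies $\|\bA\|\le\tau L_{\ell}$, because the pseudo-inverse selects the minimum-norm solution.

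The error bound at $\by_0+t\bA\bdd$, together with the Taylor remainder, then gives $\operatorname{dist}\bigl(\by_0+t\bA\bdd,\bbY^{\star}(\bx_0+t\bdd)\bigr)\le\frac{\tau H_{\ell}}{2}(1+\tau^2L_{\ell}^2)t^2\|\bdd\|^2$ for all $t>0$. Both $\tau H_{\ell}(1+\tau^2L_{\ell}^2)$ and $\tau^2L_{\ell}^2$ are at most $L_{\bbY^{\star}}$. So your Step~3 yields the stated modulus directly, with no need to reproduce the reference's constants.
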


We now apply Algorithm~\ref{alg:random-wc-meta} to compute Goldstein approximate SOSPs of NC--P\L{} bilevel programs.

\begin{algorithm}[!h]
\begin{algorithmic}[1]
\REQUIRE Functions $u,\ell:\R^n\times\R^m\rightarrow\R$ satisfying all the assumptions in Lemma~\ref{lma:bilevel-weakly-convex}, an initial point $\bx_0\in\R^n$, tolerances $\varepsilon_1,\varepsilon_2,\delta,\eta>0$. 



\STATE Run Algorithm~\ref{alg:random-wc-meta} with
$$
    g:=h_p=\max_{\by\in\bbY^{\star}(\bullet)}u(\bullet,\by)
    \quad\text{and}\quad
    \rho:=M_u L_{\bbY^{\star}}+L_u(1+L_{\bbY^{\star}}),
$$
while keeping all the other inputs the same as the ones here;

\end{algorithmic}
\caption{
Computing $(\varepsilon_1,\varepsilon_2,\delta)$-Goldstein approximate SOSPs of
NC--P\L{} bilevel programs
}
\label{alg:random-NC-PL-meta}
\end{algorithm}

\begin{corollary}
    Consider the iterates $\bx_0,\ldots,\bx_{K_{\star}}$ generated by Algorithm~\ref{alg:random-NC-PL-meta}. With probability at least $1-\eta$, 
    there exists some $k_{\star}=0,\ldots,K_{\star}-1$ such that
    $$
        \|\nabla (h_p)_{{1}/{(2\rho)}}(\bx_{k_{\star}+1})\| \le \varepsilon_1
        \quad\text{and}\quad
        \min_{\bw\in\bbS^n}\max_{\bA\in\partial_{\delta}^2 (h_p)_{{1}/{(2\rho)}}(\bx_{k_{\star}+1})}\langle\bA\bw,\bw\rangle\ge-\varepsilon_2.
    $$
\end{corollary}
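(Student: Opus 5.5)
The plan is to reduce the statement directly to Corollary~\ref{cor:weakly-convex-Goldstein-SOSP}, since Algorithm~\ref{alg:random-NC-PL-meta} only invokes Algorithm~\ref{alg:random-wc-meta} with $g:=h_p$ and $\rho:=M_u L_{\bbY^{\star}}+L_u(1+L_{\bbY^{\star}})$. It therefore suffices to check that this $g$ meets the input requirements of Algorithm~\ref{alg:random-wc-meta}: $g$ must be a real-valued $\rho$-weakly convex function on $\R^n$ with $\rho>0$. The conclusion of Corollary~\ref{cor:weakly-convex-Goldstein-SOSP} with $g=h_p$ is then verbatim the claimed statement.

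The first step is to invoke Lemma~\ref{lma:bilevel-weakly-convex}. The inputs of Algorithm~\ref{alg:random-NC-PL-meta} are assumed to satisfy all of its hypotheses, so it gives that $h_p$ is $\rho$-weakly convex with exactly the $\rho$ fed to Algorithm~\ref{alg:random-wc-meta}. I would also note that $\rho>0$ as long as $L_u>0$, or more generally as long as one of $M_u L_{\bbY^{\star}}$, $L_u$ is positive. In the degenerate case $\rho=0$ one may replace $\rho$ by any $\rho'>0$, since $\rho$-weak convexity implies $\rho'$-weak convexity for $\rho'\ge\rho$; the algorithm's $\rho$ is the stated one in any case.

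The second step is to confirm that $h_p$ is finite everywhere, i.e.\ $h_p:\R^n\rightarrow\R$, since Definition~\ref{def:Goldstein-SOSP-weakly-convex} and Corollary~\ref{cor:weakly-convex-Goldstein-SOSP} are stated for real-valued functions. I expect this to be the main (if mild) obstacle, because Lemma~\ref{lma:bilevel-weakly-convex} only assumes $\operatorname{dom}(h_p)\neq\emptyset$. My first attempt would use the reasoning behind that lemma. The error bound together with the Lipschitz gradient of $\ell$ makes $\bx\mapsto\bbY^{\star}(\bx)$ Lipschitz in the Hausdorff sense, with constant $L_{\bbY^{\star}}$. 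Since $u$ is $M_u$-Lipschitz, this gives $|h_p(\bx)-h_p(\bx')|\le M_u(1+L_{\bbY^{\star}})\|\bx-\bx'\|$ wherever one side is finite. So finiteness at a single point propagates to all of $\R^n$, and $h_p$ is locally Lipschitz. Alternatively, one can read the conclusion of Lemma~\ref{lma:bilevel-weakly-convex} (weak convexity in the sense of the paper, defined for $f:\R^n\rightarrow\R$) as already including real-valuedness.

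With both facts in hand, the last step is to apply Corollary~\ref{cor:weakly-convex-Goldstein-SOSP} to $g=h_p$. It yields, with probability at least $1-\eta$, an index $k_{\star}$ such that $\bx_{k_{\star}+1}$ satisfies the two displayed conditions for $(h_p)_{1/(2\rho)}$. Underneath, this rests on the fact that $(h_p)_{1/(2\rho)}$ is $2\rho$-smooth and on Theorem~\ref{thm:main-Goldstein-SOSP}, both of which are already packaged in that corollary.
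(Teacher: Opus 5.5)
Your proposal is correct and follows the paper's own proof, which is exactly the combination of Lemma~\ref{lma:bilevel-weakly-convex} (giving $\rho$-weak convexity of $h_p$) with Corollary~\ref{cor:weakly-convex-Goldstein-SOSP} applied to $g=h_p$. Your extra checks are details the paper leaves implicit: that $h_p$ is finite on all of $\R^n$ (finiteness propagating from a single point of $\operatorname{dom}(h_p)$ via the Hausdorff-Lipschitz continuity of $\bbY^{\star}$) and that $\rho>0$.
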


\begin{proof}
    It suffices to combine Lemma~\ref{lma:bilevel-weakly-convex} with Corollary~\ref{cor:weakly-convex-Goldstein-SOSP}.
\end{proof}

Similar to Section~\ref{sec:weakly-convex-Goldstein-computation}, the only additional ingredient needed in the implementation is the evaluation of $\nabla (h_p)_{{1}/{(2\rho)}}$, or equivalently
$$
    \operatorname{Prox}_{{h_p}/{(2\rho)}}(\bx)=\arg\min\bigl\{h_p(\by)+\rho\|\by-\bx\|^2:\by\in\R^n\bigr\}.
$$
While this is a strongly convex program, typically, we do not have access to $\partial h_p$, as $h_p$ arises as the marginal function of a nonconvex program. This hinders the application of the subgradient method. Fortunately, the evaluation subroutine for $h_p$ in~\cite[Algorithm~2]{chen2025set} allows us to apply the zeroth-order algorithm in~\cite[Algorithm~1]{shamir2017optimal} (see also~\cite[Corollary~2]{shamir2017optimal}) to compute $\operatorname{Prox}_{{h_p}/{(2\rho)}}(\bx)$. We leave the details to interested readers.
As an aside, under further assumptions (e.g., the availability of a local
upper Lipschitz selection of $\operatorname{Arg}\max\bigl\{u(\bx,\by):\by\in\bbY^{\star}(\bx)\bigr\}$), one may also apply the subdifferential calculi for marginal functions developed in, e.g.,~\cite{mordukhovich2009subgradients,phuoc2025regular}, to compute $\partial h_p$, thereby enabling the use of first-order methods.


It is worth noting that in the literature, there is another NC--P\L{} regime, albeit with a slightly stronger assumption (i.e., a uniform P\L{} property of $\sigma u+\ell$ (in $\by$) for all sufficiently small $\sigma\ge 0$, rather than merely for $\ell$ itself as in Lemma~\ref{lma:bilevel-weakly-convex}, which corresponds to $\sigma=0$; see~\cite[Assumption~4.1(a)]{chen2024finding}),
thereby leading to an $L$-smooth hyperobjective function~\cite[Lemma~4.4]{chen2024finding} as opposed to Lemma~\ref{lma:bilevel-weakly-convex}.
However, this pretty much places us back in the framework of Section~\ref{sec:NC-SC-bilevel},
and as the hypergradient can be approximated tractably to an arbitrary accuracy (see~\cite[Lemmas~4.2--4.3]{chen2024finding} and also~\cite{shen2023penalty,kwon2024penalty}),
Algorithm~\ref{alg:random-NC-SC-meta} can be reused here almost verbatim. We leave the details to interested readers.

\subsubsection{Towards new regimes with weakly convex hyperobjective functions}
It is well known that the primal function of a WC--C (i.e., weakly-convex--concave) minimax program is weakly convex; see, e.g., the discussions preceding~\cite[Definition~1]{li2025nonsmooth}. However, it is also well known that minimax optimization is a special case of bilevel optimization; i.e., with $u=-\ell$. How does this minimax fact generalize to bilevel programs? Very naturally, one may reinterpret the condition as requiring both $u$ and $-\ell$ to be WC--C. 
However, perhaps surprisingly, even both $u$ and $-\ell$ being SC--SC 
is not enough, no matter whether pessimistic or optimistic.

\begin{example}
    Let $\bbX=\R$ and $\bbY=[-2,2]$. Consider the functions $u,\ell:\R^2\rightarrow\R$ given by
    $$
        u(x,y):=\frac{1}{2} x^2-\frac{1}{2} y^2+x y
        \quad\text{and}\quad
        \ell(x,y):=\frac{1}{2}(y-\sin x)^2-2 x^2,
    $$
    respectively. It can be calculated that 
    $$
        \nabla^2 u(x,y)=\begin{pmatrix}1&1\\1&-1\end{pmatrix}
        \quad\text{and}\quad
        \nabla^2 \ell(x,y)=\begin{pmatrix} \cos 2x + y\sin x-4 & -\!\cos x\\ -\!\cos x & 1 \end{pmatrix}.
    $$
    As a result, both $u$ and $-\ell$ are $1$-SC--$1$-SC on $\bbX\times\bbY$ as
    $$
        \cos 2x + y\sin x-4 \le 1+2\cdot 1 -4=-1
        \quad\text{for all $x\in\R$ and $y\in\bbY=[-2,2]$}.
    $$
    It is also easy to see that
    $$
        y^{\star}(x)=\sin x
        \quad\text{for all $x\in\R$},
    $$
    as $\sin x\in[-1,1]\subseteq[-2,2]=\bbY$. (In particular, this implies that $h_o=h_p$.) As a result, 
    $$
        h(x)=u\bigl(x,y^{\star}(x)\bigr)=u(x,\sin x)=\frac{1}{2} x^2-\frac{1}{2} \sin^2 x+x \sin x,
    $$
    and thus
    $$
        h^{\prime\prime}(x)=-x \sin x+2 \cos x-
        \cos 2x
        +1,
    $$
    which satisfies that
    $$
        \liminf_{x\rightarrow\infty}h^{\prime\prime}(x)\le
        \lim_{k\rightarrow\infty}h^{\prime\prime}(\pi/2+2\pi k)=\lim_{k\rightarrow\infty}\bigl(2-(\pi/2+2\pi k)\bigr)=-\infty;
    $$
    i.e., $h$ is not weakly convex.
\end{example}

While we do not yet have a satisfactory answer to the generalization, the following proposition, which exploits the constancy of $u(\bx,\bullet)+\nu\ell(\bx,\bullet)$ on $\bbY^{\star}(\bx)$ for some $\nu\ge 0$, may offer a useful clue. 

\begin{proposition}\label{prop:WC-C-bilevel}
    Suppose that $\bbX$ is convex, that $\bbY^{\star}(\bx)\neq\emptyset$ for every $\bx\in\bbX$, that there exist a constant $\nu\ge 0$ and a $\rho_p$-weakly convex function $p:\bbX\rightarrow\R$ such that
    $$
        u(\bx,\by)+\nu\ell(\bx,\by)=p(\bx)
        \quad\text{for all $\bx\in\bbX$ and $\by\in\bbY^{\star}(\bx)$},
    $$
    and that $\ell(\bullet,\by)$ is $\rho_{\ell}$-weakly concave on $\bbX$ for all $\by\in\bbY$. 
    Then, $h$ is $(\rho_p+\nu\rho_{\ell})$-weakly convex on $\bbX$.
\end{proposition}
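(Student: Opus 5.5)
Since $u(\bx,\by)+\nu\ell(\bx,\by)=p(\bx)$ for every $\by\in\bbY^{\star}(\bx)$, we have $u(\bx,\by)=p(\bx)-\nu\ell(\bx,\by)$ on $\bbY^{\star}(\bx)$. Moreover, $\ell(\bx,\by)$ equals the constant $\ell^{\star}(\bx):=\min_{\by'\in\bbY}\ell(\bx,\by')$ there. Hence $u(\bx,\bullet)$ is itself constant on $\bbY^{\star}(\bx)$, so $h_o=h_p=h$ with
$$
    h(\bx)=p(\bx)-\nu\ell^{\star}(\bx)=p(\bx)+\nu\max_{\by\in\bbY}\bigl(-\ell(\bx,\by)\bigr)\quad\text{for all $\bx\in\bbX$}.
$$
The plan is to show that each summand is weakly convex with the right modulus and then add them.

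\textbf{First summand.} By hypothesis $p$ is $\rho_p$-weakly convex on $\bbX$, i.e., $p+\frac{\rho_p}{2}\|\bullet\|^2$ is convex on the convex set $\bbX$.

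\textbf{Second summand.} For each fixed $\by\in\bbY$, the function $-\ell(\bullet,\by)$ is $\rho_\ell$-weakly convex on $\bbX$, so $\bx\mapsto-\ell(\bx,\by)+\frac{\rho_\ell}{2}\|\bx\|^2$ is convex. Taking the pointwise supremum over $\by\in\bbY$ preserves convexity. The supremum is finite, since it equals $-\ell^{\star}(\bx)+\frac{\rho_\ell}{2}\|\bx\|^2$ and $\ell^{\star}(\bx)$ is attained because $\bbY^{\star}(\bx)\neq\emptyset$. Therefore $-\ell^{\star}+\frac{\rho_\ell}{2}\|\bullet\|^2$ is convex on $\bbX$, and multiplying by $\nu\ge 0$ keeps it convex.

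\textbf{Conclusion.} Summing the two convex functions shows that $h+\frac{\rho_p+\nu\rho_\ell}{2}\|\bullet\|^2$ is convex on $\bbX$. This is exactly $(\rho_p+\nu\rho_\ell)$-weak convexity of $h$ on $\bbX$.

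The main care point is the finiteness and attainment of the inner maximum, which keeps $h$ real-valued. This is supplied by $\bbY^{\star}(\bx)\neq\emptyset$. The convexity of $\bbX$ is needed so that convexity on $\bbX$ makes sense. Everything else is the standard fact that a supremum of convex functions is convex.
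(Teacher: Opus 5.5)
Your proof is correct and follows essentially the same route as the paper. The paper also writes $h=p-\nu v$ with $v(\bx)=\min_{\by\in\bbY}\ell(\bx,\by)$, notes $h_o=h_p$, and uses that $v$ is $\rho_\ell$-weakly concave as an infimum of uniformly weakly concave functions. The only difference is that you verify the infimum and sum rules directly (via the supremum of convex functions), whereas the paper cites Vial's Proposition~4.1(i)--(ii) for them.
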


We note that by taking $\ell=-u$, $\nu=1$, and $p=0$, Proposition~\ref{prop:WC-C-bilevel} recovers the minimax case. As an aside, Proposition~\ref{prop:WC-C-bilevel} does not impose any regularity condition on $\bbY^{\star}:\bbX\rightrightarrows\bbY$ such as its set smoothness~\cite[Definition~3]{chen2025set}, thereby potentially broadening the applicability of~\cite[Theorem~4]{chen2025set}.
Moreover, it can also be taken as a recognition principle for disguised minimax problems, as long as $u(\bx,\bullet)+\nu\ell(\bx,\bullet)$ is in fact constant on all of $\bbY$, rather than merely on $\bbY^{\star}(\bx)$.

\begin{proof}
    Let $v(\bx):=\min_{\by\in\bbY}\ell(\bx,\by)$. We have $u(\bx,\by)=p(\bx)-\nu v(\bx)$ for all $\bx\in\bbX$ and $\by\in\bbY^{\star}(\bx)$; in particular, this implies that $h_o=h_p$. 
    On the other hand,
    by~\cite[Proposition~4.1(ii)]{vial1983strong}, it follows that $v$, as the pointwise infimum of $\ell(\bullet,\by)$ for $\by\in\bbY$, is $\rho_{\ell}$-weakly concave on $\bbX$. This, together with the weak convexity of $p$ and~\cite[Proposition~4.1(i)]{vial1983strong}, further implies that $h$ is $(\rho_p+\nu\rho_{\ell})$-weakly convex on $\bbX$,
    as desired.
\end{proof}

As an illustration,
Proposition~\ref{prop:WC-C-bilevel} 
allows us to identify the minimax essence of
an important class of bilevel problems in machine learning---learning with Fenchel--Young losses; see, e.g.,~\cite{blondel2019learning,blondel2020learning,martins2022sparse,blondel2022learning,rakotomandimby2024learning,roulet2025loss}.

\begin{example}[Learning with Fenchel--Young losses]
Let $\bF(\bullet;\bw):\R^{d_x}\rightarrow\R^{d_y}$ be a model parametrized by $\bw\in\R^n$, $\Omega:\R^{d_y}\rightarrow\R\cup\{\infty\}$ be a regularization function such that 
$$
    \max\bigl\{\langle\bd{\theta}, \bd{\mu}\rangle-\Omega(\bd{\mu}):\bd{\mu} \in \operatorname{dom}(\Omega)\bigr\}
$$ 
is attainable for all $\bd{\theta}\in\R^{d_y}$,\footnote{We remark that this holds for all choices of $\Omega$ in~\cite[Table~1]{blondel2020learning}.}
and $\{(\bx_i,\by_i)\}_{i=1}^m\in\R^{d_x}\times\bbY_{\operatorname{data}}$ be a set of data where $\bbY_{\operatorname{data}}\subseteq\operatorname{dom}(\Omega)
$.
Learning with Fenchel--Young losses pertains to the following optimization problem
\begin{equation}\label{eq:learning-FY}
    \min\mleft\{\sum_{i=1}^m L_{\Omega}\bigl(\bF(\bx_i;\bw) ; \by_i\bigr):\bw\in\R^n\mright\},
    \quad\text{where $L_{\Omega}(\bd{\theta} ; \by):=\Omega^*(\bd{\theta})+\Omega(\by)-\langle\bd{\theta}, \by\rangle$},
\end{equation}
and $\Omega^*$ is the Fenchel conjugate of $\Omega$.
As we have (see, e.g., the discussions after~\cite[Definition~2]{blondel2020learning})
$$
    L_{\Omega}(\bd{\theta} ; \by)=\Omega(\by)-\langle\bd{\theta}, \by\rangle-\Omega(\widehat{\by})+\langle\bd{\theta}, \widehat{\by}\rangle,
$$
where
$$
    \widehat{\by} \in {\operatorname{Arg}\max}\bigl\{\langle\bd{\theta}, \bd{\mu}\rangle-\Omega(\bd{\mu}):\bd{\mu} \in \operatorname{dom}(\Omega)\bigr\}
$$
can be arbitrary, (\ref{eq:learning-FY}) can be equivalently rewritten as the following bilevel program
\begin{equation}\label{eq:bilevel-FY}
    \min_{\bw\in\R^n}\underset{(\hat{\by}_i)_{i=1}^m \in \bbY^{\star}(\bw)}{\min\!/\!\max}
    u\bigl(\bw,(\widehat{\by}_i)_{i=1}^m\bigr):=
    \sum_{i=1}^m \bigl(\Omega(\by_i)-\langle\bF(\bx_i;\bw), \by_i\rangle-\Omega(\widehat{\by}_i)+\langle\bF(\bx_i;\bw), \widehat{\by}_i\rangle\bigr),
\end{equation}
where
$$
    \bbY^{\star}(\bw):={\operatorname{Arg}\min}\mleft\{
    \ell\bigl(\bw,(\bd{\mu}_i)_{i=1}^m\bigr):=
    -\sum_{i=1}^m\bigl(\langle\bF(\bx_i;\bw), \bd{\mu}_i\rangle-\Omega(\bd{\mu}_i)\bigr):(\bd{\mu}_i)_{i=1}^m \in \operatorname{dom}(\Omega)^m\mright\}\neq\emptyset,
$$
where the nonemptiness follows from the attainability assumption. Now, we calculate that
\begin{equation}\label{eq:u+l-FY}
    u\bigl(\bw,(\widehat{\by}_i)_{i=1}^m\bigr)+\ell\bigl(\bw,(\widehat{\by}_i)_{i=1}^m\bigr)=\sum_{i=1}^m \bigl(\Omega(\by_i)-\langle\bF(\bx_i;\bw), \by_i\rangle\bigr)
    \quad\text{for all $\bw\in\R^n$ and $(\widehat{\by}_i)_{i=1}^m \in \operatorname{dom}(\Omega)^m$},
\end{equation}
which is independent of $\widehat{\by}_1,\ldots,\widehat{\by}_m$. As a result, as long as 
$$
    \bw\mapsto\sum_{i=1}^m \bigl(\Omega(\by_i)-\langle\bF(\bx_i;\bw), \by_i\rangle\bigr)
$$
is weakly convex and 
$$
    \bw\mapsto-\sum_{i=1}^m\bigl(\langle\bF(\bx_i;\bw), \bd{\mu}_i\rangle-\Omega(\bd{\mu}_i)\bigr)
$$
is weakly concave for all $(\bd{\mu}_i)_{i=1}^m \in \operatorname{dom}(\Omega)^m$ (with uniform moduli),
Proposition~\ref{prop:WC-C-bilevel} would imply that the hyperobjective function is weakly convex.
In particular, this is the case when
$$
    \bw\mapsto\sum_{i=1}^m\langle\bF(\bx_i;\bw), \bd{\mu}_i\rangle
$$
is $L$-smooth for all $(\bd{\mu}_i)_{i=1}^m \in \operatorname{dom}(\Omega)^m$, which can be achieved when, e.g., $\operatorname{dom}(\Omega)$ is bounded (e.g., perceptron, logistic, hinge, sparsemax, logistic (one-vs-all), structured perceptron, structured hinge, CRF, and sparseMAP losses; see~\cite[Table~1]{blondel2020learning}) and $\bw\mapsto\bF(\bx_i;\bw)$ for $i=1,\ldots,m$ are $L^{\prime}$-smooth (e.g., two-layer linear neural networks $\bF_{\operatorname{LNN}}(\bx;\bW_1,\bW_2):=\bW_2\bW_1\bx$~\cite{arora2019convergence}, and degree-$2$ factorization machines\footnote{We omit the linear term $\bw^{\T}\bx$ here as it can be incorporated by augmenting $\bx\leftarrow 1\vee\bx$; see, e.g.,~\cite[Section~8]{blondel2016polynomial}.} $f_{\operatorname{FM}}(\boldsymbol{x}; \boldsymbol{P}):=\sum_{j>i}\bigl(\boldsymbol{P}\boldsymbol{P}^{\T}\bigr)_{i j} x_{i} x_{j}$~\cite{blondel2016polynomial,blondel2016higher,blondel2017multi,atarashi2021factorization}).

In fact, (\ref{eq:bilevel-FY}) is a essentially a minimax problem in bilevel disguise: By (\ref{eq:u+l-FY}), we have
$$
    \underset{(\hat{\by}_i)_{i=1}^m \in \bbY^{\star}(\bw)}{\min\!/\!\max}
    u\bigl(\bw,(\widehat{\by}_i)_{i=1}^m\bigr)=
    \max_{(\hat{\by}_i)_{i=1}^m \in \operatorname{dom}(\Omega)^m}
    \sum_{i=1}^m \bigl(\Omega(\by_i)-\langle\bF(\bx_i;\bw), \by_i\rangle-\Omega(\widehat{\by}_i)+\langle\bF(\bx_i;\bw), \widehat{\by}_i\rangle\bigr).
$$
\end{example}

Another illustration is data-driven inverse optimization with suboptimality losses; see, e.g.,~\cite{mohajerin2018data}.

\begin{example}[Inverse optimization with suboptimality losses]
    Let $f:\R^{d_1}\times\R^{d_2}\rightarrow\R$ be a ground-truth function to be recovered, $\bbU\subseteq\R^{d_1}$, $\bbV:\R^{d_1}\rightrightarrows\R^{d_2}$, $\doublestroke{\Xi}:=\{(\bu,\bv):\bu\in\bbU,\,\bv\in\bbV(\bu)\}$,
    $$
        \bbV^{\star}(\bu):=\operatorname{Arg}\min\bigl\{f(\bu,\bv):\bv\in\bbV(\bu)\bigr\},
    $$
    which is assumed to be nonempty for all $\bu\in\bbU$ following~\cite[Section~2]{mohajerin2018data}, 
    $$
        \bbF:=\left\{\widetilde{f}(\bullet,\bullet;\bd{\theta}):\R^{d_1}\times\R^{d_2}\rightarrow\R:\bd{\theta}\in\doublestroke{\Theta}\right\},
        \quad\text{where $\doublestroke{\Theta}\subseteq\R^n$}
    $$
    be the hypothesis space where each model $\widetilde{f}(\bullet,\bullet;\bd{\theta})$ is parametrized by $\bd{\theta}\in\doublestroke{\Theta}$, and $(\widehat{\bu}_i,\widehat{\bv}_i)\in\bbU\times\bbV^{\star}(\widehat{\bu}_i)$ for $i=1,\ldots,m$ be $m$ independent samples from a probability distribution $\bbP$ supported on $\doublestroke{\Xi}$.
    Inverse optimization with suboptimality losses~\cite[Definition~2.3]{mohajerin2018data} pertains to the following bilevel program
    \begin{equation}\label{eq:bilevel-IO}
        \min_{\bd{\theta}\in\doublestroke{\Theta}}\underset{(\bv_i)_{i=1}^m \in \bbY^{\star}(\bd{\theta})}{\min\!/\!\max}
        u\bigl(\bd{\theta},(\bv_i)_{i=1}^m\bigr):=\frac{1}{m} \sum_{i=1}^m \left(\widetilde{f}(\widehat{\bu}_i, \widehat{\bv}_i; \bd{\theta})- \widetilde{f}(\widehat{\bu}_i, \bv_i; \bd{\theta})\right),
    \end{equation}
    where
    $$
        \bbY^{\star}(\bd{\theta}):=\operatorname{Arg}\min\mleft\{
        \ell\bigl(\bd{\theta},(\bv_i)_{i=1}^m\bigr):=
        \frac{1}{m} \sum_{i=1}^m  \widetilde{f}(\widehat{\bu}_i, \bv_i; \bd{\theta}):(\bv_i)_{i=1}^m \in \prod_{i=1}^m\bbV(\widehat{\bu}_i)\mright\},
    $$
    which is also assumed to be nonempty.\footnote{We remark that this is the case when, e.g., $\bbV$ is compact-valued and $\widetilde{f}(\bullet,\bullet;\bd{\theta})$ is continuous.}
    Now, we calculate that
    \begin{equation}\label{eq:u+l-IO}
        u\bigl(\bd{\theta},(\bv_i)_{i=1}^m\bigr)+\ell\bigl(\bd{\theta},(\bv_i)_{i=1}^m\bigr)=\frac{1}{m} \sum_{i=1}^m \widetilde{f}(\widehat{\bu}_i, \widehat{\bv}_i; \bd{\theta})
        \quad\text{for all $\bd{\theta}\in\doublestroke{\Theta}$ and $(\bv_i)_{i=1}^m \in \prod_{i=1}^m\bbV(\widehat{\bu}_i)$},
    \end{equation}
    which is independent of $\bv_1,\ldots,\bv_m$. As a result, as long as $\doublestroke{\Theta}$ is convex,
    $$
        \bd{\theta}\mapsto \frac{1}{m} \sum_{i=1}^m \widetilde{f}(\widehat{\bu}_i, \widehat{\bv}_i; \bd{\theta})
    $$
    is weakly convex on $\doublestroke{\Theta}$, and 
    $$
        \bd{\theta}\mapsto \frac{1}{m} \sum_{i=1}^m  \widetilde{f}(\widehat{\bu}_i, \bv_i; \bd{\theta})
    $$
    is weakly concave on $\doublestroke{\Theta}$ for all $(\bv_i)_{i=1}^m \in \prod_{i=1}^m\bbV(\widehat{\bu}_i)$ (with uniform moduli),
    Proposition~\ref{prop:WC-C-bilevel} would imply that the hyperobjective function is weakly convex on $\doublestroke{\Theta}$.
    In particular, the latter two properties hold when, e.g., $\widetilde{f}(\bu, \bv; \bullet)$ is $L$-smooth for all $(\bu,\bv)\in\doublestroke{\Xi}$.
    
    In fact, (\ref{eq:bilevel-IO}) is a essentially a minimax problem in bilevel disguise: By (\ref{eq:u+l-IO}), we have
    $$
        \underset{(\bv_i)_{i=1}^m \in \bbY^{\star}(\bd{\theta})}{\min\!/\!\max}
        u\bigl(\bd{\theta},(\bv_i)_{i=1}^m\bigr)=
        \max_{(\bv_i)_{i=1}^m \in \prod_{i=1}^m\bbV(\hat{\bu}_i)}
        \frac{1}{m} \sum_{i=1}^m \left(\widetilde{f}(\widehat{\bu}_i, \widehat{\bv}_i; \bd{\theta})- \widetilde{f}(\widehat{\bu}_i, \bv_i; \bd{\theta})\right).
    $$
    
\end{example}


\subsubsection{Trilevel optimization and beyond}
In fact, Algorithm~\ref{alg:random-meta} is also applicable to the following trilevel problem (see, e.g.,~\cite{vicente1994bilevel,sato2021gradient,giovannelli2025stochastic,kent2025stochastic})
$$
    \min_{\bx_1\in\R^{n_1}}
    h(\bx_1):=
    f_1\mleft(\bx_1,\bx_2^{\star}(\bx_1),\bx_3^{\star}\bigl(\bx_1,\bx_2^{\star}(\bx_1)\bigr)\mright),
$$
where 
$$
    \bx_2^{\star}(\bx_1):=\underset{\bx_2\in\R^{n_2}}{\arg\min}\,
    f_2\bigl(\bx_1,\bx_2,\bx_3^{\star}(\bx_1,\bx_2)\bigr)
    \quad\text{and}\quad
    \bx_3^{\star}(\bx_1,\bx_2):=\underset{\bx_3\in\R^{n_3}}{\arg\min}\,
    f_3(\bx_1,\bx_2,\bx_3),
$$
$f_1,f_2,f_3:\R^{n_1}\times\R^{n_2}\times\R^{n_3}\rightarrow\R$ are the upper, middle, and lower functions,
and the notation tacitly assumes that both the minima are attainable and unique, as will be the case in the sequel.
Indeed, under~\cite[Assumptions~3.1--3.3]{giovannelli2025stochastic} (which we prefer not spell out here as they are technically involved and entail new notation),
\cite[Proposition~E.2]{giovannelli2025stochastic} (or more precisely,~\cite[(E.11)]{giovannelli2025stochastic}) implies that $h$ is $L$-smooth for some $L>0$, 
albeit, to the best of our knowledge,
its order is not yet fully clear.
Besides, $\nabla h$ also admits an explicit closed-form expression, evaluable via matrix multiplications; see~\cite[Proposition~A.1]{giovannelli2025stochastic}. As a result, Algorithm~\ref{alg:random-meta} can be applied to $h$ exactly as in Section~\ref{sec:NC-SC-bilevel}; we leave the details to interested readers.

What is the order of $L$? 
When is $h$ weakly convex?
How do the properties carry over to multilevel settings?
We leave these intriguing questions to future work.


\section{Concluding remarks}\label{sec:conclusions}
In this paper, we have exhibited a randomized algorithm (i.e., Algorithm~\ref{alg:random-meta}) that computes Goldstein approximate SOSPs of
$L$-smooth functions in 
a polynomial number of
first-order oracle calls. In particular, in the regime where $\varepsilon_1,\varepsilon_2,\delta\asymp\varepsilon$, the complexity admits the explicit form 
$
    \widetilde{O}\bigl({\Delta L^8 n^2}/{\varepsilon^9}+{\Delta L^6 n^3}/{\varepsilon^7}\bigr)
$.
In the course of developing Algorithm~\ref{alg:random-meta}, some underpinning theory is also established; e.g., a framework for second-order randomized smoothing, the corresponding gradient and Hessian estimators, and a few properties of the Goldstein generalized Hessian, which 
may 
be of independent interest.
Furthermore,
with Algorithm~\ref{alg:random-meta} as a foundation, we extend the computability of Goldstein SOSPs to weakly convex functions, NC--SC and NC--P\L{} bilevel programs, and beyond,
together with some auxiliary results that may be of independent interest, such as a second-order stationarity coincidence between a weakly convex function and its Moreau envelope.

This work also points to some directions for future work,
as detailed in the following subsections.

\subsection{Dimension-free computations of Goldstein approximate SOSPs}
It is well known that for Lipschitz functions, Goldstein approximate FOSPs are computable by randomized algorithms with a dimension-free complexity; see, e.g.,~\cite{zhang2020complexity,tian2022finite,davis2022gradient}. However, while 
Algorithm~\ref{alg:random-meta} is able to compute Goldstein approximate SOSPs of $L$-smooth functions, its complexity depends 
on $n$.
This naturally raises the following question: 
$$
    \boxed{\textnormal{Whether Goldstein approximate SOSPs remain computable in a dimension-free manner?}}
$$
In fact, prior to developing the algorithms in this paper, we had already devoted considerable efforts to the question, but to no avail.
The main obstacle that we encountered is the absence of an error bound for the first-order expansion of $\lambda_{\min}$ (see, e.g.,~\cite[Theorem~3.12]{hiriart1995sensitivity}) that remains valid even beyond the 
spectral-gap regimes,
whereas the $\ell_2$-norm by contrast admits an exact global quadratic expansion given by the binomial formula, which plays a crucial role in the proof of~\cite[Theorem~8]{zhang2020complexity}.
(In passing, we note that the best such result that we are aware of is~\cite[Proposition~1]{nakatsukasa2018off}; 
see also, e.g.,~\cite[Chapter~II, Example~3.3]{kato1995perturbation} for related error bounds.)
In fact, we are more inclined to believe that the answer is in the negative.
If this is indeed the case, then it would exhibit a 
phase transition
between the first-order and second-order regimes,
and~\cite[Algorithm~1]{zhang2020complexity} would amount to a miracle.

\begin{conjecture}
    No dimension-free randomized algorithm computes Goldstein approximate SOSPs of $L$-smooth functions.
\end{conjecture}

\subsection{Computing near-approximate SOSPs for more structured function classes}
While it is well known that computing a near-approximate FOSP of Lipschitz functions is intractable even with the help of randomization~\cite[Theorem~1]{kornowski2021oracle}, under the additional assumption of weak convexity, such a point can be computed efficiently by, e.g., the (deterministic) subgradient method~\cite[Theorem~3.1]{davis2019stochastic}.
It is thus natural to ask:
$$
    \boxed{
    \begin{gathered}
    \textnormal{Whether there exists another kind of regularity that, as weak convexity,}\\
    \textnormal{can enable the efficient computation of near-approximate SOSPs of $L$-smooth functions?}
    \end{gathered}
    }
$$
To be precise, by near-approximate SOSPs, we simply mean
$$
    \|\nabla f(\bx)\|\le\varepsilon_1
    \quad\text{and}\quad
    \max_{\by\in\bx+\delta\bbB^n}\min_{\bw\in\bbS^n}\max_{\bA\in\partial_{\dC}^2 f(\by)}\langle\bA\bw,\bw\rangle\ge-\varepsilon_2.
$$
For this question, we have also invested considerable efforts, albeit, again, with a pessimistic outlook:
Some evidence suggests that the resulting function classes should be almost indistinguishable from those of convex functions
and functions with Lipschitz Hessians, while we do not yet know how to formalize this intuition.

Of course, it is equally important to establish hardness results for computing such points.

\subsection{Landscape analysis based on the notion of Goldstein SOSPs}
In this paper, we have shown certain computability of Goldstein approximate SOSPs.
However, the second-order regime is somehow special, as at least in the smooth world many problems 
are devoid of
spurious SOSPs, 
as discussed in Section~\ref{sec:introduction}.
It is thus natural to wonder:
$$
    \boxed{
    \begin{gathered}
        \textnormal{Are there nonsmooth nonconvex functions, particularly those arising in applications,}\\
        \textnormal{that are devoid of spurious Goldstein SOSPs?}
    \end{gathered}
    }
$$
A potential starting point could be the objective function of $\ell_1$-norm rank-one symmetric matrix factorization (i.e., $\bu\mapsto\|\bu\bu^{\T}-\bu^{\star}{\bu^{\star}}^{\T}\|_1/2$), which we have shown in~\cite[Theorem~3.8]{guan2024ell_1} to be devoid of Rockafellar SOSPs. We believe that either a positive or a negative answer would be interesting in its own right.


\subsection{Further directions}
Of course, beyond the directions sketched earlier, there are certainly many others.
Some immediate topics that we have in mind are:
\begin{itemize}
    \item The role of the Goldstein generalized Hessian in convergence rates of algorithms; see, e.g.,~\cite{davis2025local,kong2025lipschitz,gebken2025analyzing} for its first-order counterparts, such as the Goldstein modulus~\cite[Definition~2.2]{kong2025lipschitz}.

    \item 
    What if constraints arise?
    For example, can we define second-order analogues of Goldstein FJ/KKT stationarity~\cite[Definitions~1--2]{grimmer2025goldstein}
    and develop corresponding algorithms? 

    \item Manifold extensions; see, e.g.,~\cite[Definition~3.1]{grohs2016varepsilon} for the Riemannian Goldstein subdifferential and~\cite{grohs2016varepsilon,sahinoglu2025finite} for the corresponding algorithms.
    In particular, this can potentially be applied to analyze the landscapes of nonsmooth problems posed on manifolds; e.g., dictionary learning~\cite{bai2019subgradient}.


    \item Hessian sampling; see, e.g.,~\cite{burke2005robust,davis2022gradient} for gradient-sampling algorithms.

    \item Other aspects, such as multiobjective optimization~\cite{gebken2021efficient,sonntag2024descent}, robust control~\cite{guo2022global}, etc.
\end{itemize}

\section*{Acknowledgments}
We are deeply indebted to Zhenwei Lin and Zhe (Jimmy) Zhang at Edwardson School of Industrial Engineering, Purdue University for an insightful discussion regarding the proof of~\cite[Proposition~2.3]{lin2022gradient} at an ancient stage of the project, which ultimately became, in essence, the proof of Theorem~\ref{thm:smoothing}.

\bibliographystyle{abbrv}
\bibliography{references.bib}{}




\end{document}